\pgfplotsset{compat=1.18}
\theoremstyle{plain}
\newtheorem{theorem}{Theorem}[section]
\newtheorem{corollary}[theorem]{Corollary}
\newtheorem{lemma}[theorem]{Lemma}
\newtheorem{proposition}[theorem]{Proposition}
\theoremstyle{definition}
\newtheorem{definition}[theorem]{Definition}
\newtheorem{exmp}[theorem]{Example}
\newcommand\numberthis{
	\addtocounter{equation}{1}\tag{\theequation}
}
\DeclareMathOperator*{\argmax}{arg\,max}
\def\P{\mathbbm{P}}
\def\E{\mathbbm{E}}
\def\V{\text{Var}}
\def\F{\mathcal{F}}
\def\K{\boldsymbol{\mathfrak{K}}}
\def\L{\mathcal{L}}
\def\N{\mathbbm{N}}
\def\R{\mathbbm{R}}
\def\1{\mathbbm{1}}
\def\X{\{X_n, z_0\}}
\def\Xfull{\{X_n, z_0\}_{n\in\N_1}}
\def\pt{\tilde{\phi}}
\def\Z{\{Z_n, z_0\}}
\def\Zfull{\{Z_n, z_0\}_{n\in\N_1}}
\def\btheta{\boldsymbol{\theta}}
\def\bin{\text{Bin}}
\def\dn{\text{DN}}
\def\geo{\text{Geo}}
\def\poi{\text{Poi}}
\begin{document}

\title{Existence and non-existence of consistent estimators in supercritical controlled branching processes}

\author{
  Peter Braunsteins\footnote{School of Mathematics and Statistics, University of New South Wales, email: \texttt{p.braunsteins@unsw.edu.au}, ORCID: 0000-0003-1864-0703}, Sophie Hautphenne\footnote{School of Mathematics and Statistics, University of Melbourne, email: \texttt{sophiemh@unimelb.edu.au}, ORCID: 0000-0002-8361-1901}, and James Kerlidis\footnote{School of Mathematics and Statistics, University of Melbourne, email: \texttt{jkerlidis@student.unimelb.edu.au}, ORCID: 0009-0004-4904-0262}
}
\date{}

\maketitle

\begin{abstract}
    We consider the problem of estimating the parameters of a supercritical controlled branching process consistently from a single observed trajectory of population size counts. Our goal is to establish which parameters can and cannot be consistently estimated. When a parameter can be consistently estimated, we derive an explicit expression for the estimator. We address these questions in three scenarios: when the distribution of the control function distribution is known, when it is unknown, and when progenitor numbers are observed alongside population size counts. Our results offer a theoretical justification for the common practice in population ecology of estimating demographic and environmental stochasticity using separate observation schemes.
\end{abstract}

\textit{Keywords:} branching process, consistent estimation, control function.

\smallskip

2000 MSC: 60J80, 60J10.

\section{Introduction}\label{sec:Introduction}

Branching processes are stochastic models in which individuals reproduce and die according to probabilistic laws.
They have been used in various applications, particularly in population biology \cite{jagers75, haccou05, kimmel15}.
The simplest branching process is the discrete-time \textit{Bienaym{\'e}-Galton-Watson process} (BGWP) whose population size at each generation $n$ is recursively defined as 
\begin{equation}\label{eqn:BGWP}
    X_0 = z_0, \quad X_n = \sum_{i=1}^{X_{n-1}} \xi_{n,i} \;\text{ for } n \geq 1,
\end{equation}
for some initial value $z_0 > 0$, where $\{\xi_{n,i}\}_{n,i \geq 1}$ are independent random variables with common distribution $\xi$, known as the \textit{offspring distribution}.
These processes exhibit exponential growth, in that $\E(X_n\,|\,Z_0=z_0) = z_0 \, m^n$, where $m := \E(\xi)$ is the offspring mean.

BGWPs are often not suitable models for biological populations. Indeed, many biological populations do not grow exponentially; for example, due to competition for limited resources, they may exhibit logistic growth. In addition, individuals within the same generation may not give birth independently;  for example, this could be due to random population-wide factors, such as weather conditions, that are often referred to as \textit{environmental stochasticity} \cite[Chapter~1.2]{lande03}.
A common extension to the BGWP that overcomes these limitations is the \textit{controlled branching process} (CBP). Using the definition introduced by Yanev \cite{yanev76} (see also \cite{sebastyanov74}), a CBP $\Zfull$ is defined  recursively as
\begin{equation}\label{eqn:CBP}
    Z_0 = z_0, \quad
    Z_n = \sum_{i=1}^{\phi_n(Z_{n-1})} \xi_{n,i} \;\text{ for } n \geq 1,
\end{equation}
where the family of random variables $\{ \phi_n(z) \}_{n \geq1, z \geq0}$ defines the process' \textit{control function}. 
We assume that the $\phi_n(z)$'s are mutually independent, are independent of the $\xi_{n,i}$'s, and that their distribution only depends on $z$ (and not on $n$).

When using a CBP $\Zfull$ to model a population, we often consider a class of CBPs parameterised by some parameter $\btheta\in \Theta$, and use observed population size counts $Z_0,\, Z_1,\, \ldots,\, Z_n$ to estimate $\btheta$ via an estimator $\hat{\btheta}_n := \hat{\btheta}_n(Z_0,\, Z_1,\, \ldots,\, Z_n)$. Here we are interested in the \textit{supercritical} case with $\P(Z_n \to \infty) > 0$. A desirable property of the estimator $\hat{\btheta}_n$ is consistency on $\{Z_n\to\infty\}$, the \textit{set of unbounded growth}: the sequence of estimators $\{ \hat{\btheta}_n \}$ is said to be weakly (resp. strongly) consistent on the set of unbounded growth if, on $\{Z_n\to\infty\}$ and for every initial population size $z_0\in\N_1$,
\begin{align}
    \forall\, \varepsilon > 0, \quad
    \lim_{n \to \infty} \P( | \hat{\btheta}_n - \btheta | > \varepsilon \,|\, Z_0 = z_0) = 0
    \quad & \text{(\textit{weak consistency})} \label{eqn:WeakConsistency} \\
    \P\Big( \lim_{n \to \infty}\hat{\btheta}_n = \btheta \,\Big|\, Z_0 = z_0 \Big) = 1
    \qquad\qquad & \text{(\textit{strong consistency}).} \label{eqn:StrongConsistency}
\end{align}
If $\{ \hat{\btheta}_n \}$ is consistent, then, as more data become available, the sequence converges to the true parameter value $\btheta$. On the other hand, if $\{ \hat{\btheta}_n \}$ is not consistent, then we may question whether a consistent estimator for $\btheta$ actually exists. If not, this may be an indication that the model is over-parametrised.

The goal of this paper is to help determine which parameters of a supercritical CBP can be estimated consistently. We aim to give a complete picture by addressing the following two questions:
\begin{itemize}
    \item \textbf{Q1}: Which parameters of a supercritical CBP cannot be consistently estimated?
    \item \textbf{Q2}: What is an explicit expression for a consistent estimator when a parameter is consistently estimable?
\end{itemize}
Under certain regularity conditions, we answer these questions in three different scenarios:
\begin{itemize}
    \item \textbf{S1} (Section \ref{sec:CBPEstimationKnownPhi}): When the distribution of the random control function $\{ \phi_n(z) \}$ is \textit{known}, and we aim to estimate the parameters of the offspring distribution $\xi$ from observations of the population size counts $Z_0,\, Z_1,\, \ldots,\, Z_n$.
    \item \textbf{S2} (Section \ref{sec:CBPEstimationUnknownControl}): When the distribution of the random control function $\{ \phi_n(z) \}$ is \textit{unknown}, and we aim to estimate the parameters of \textit{both} the random control function $\{ \phi_n(z) \}$ and the offspring distribution $\xi$ from $Z_0,\, Z_1,\, \ldots,\, Z_n$.
    \item \textbf{S3} (Section \ref{sec:CBPEstimationObservedProgenitors}): In the same setting as \textbf{S2}, but where both the population size counts $Z_0,\, Z_1,\, \ldots,$ $Z_n$ and progenitor numbers $\phi_0(Z_0),\, \phi_1(Z_1),\, \ldots,\, \phi_n(Z_n)$ are observed.
\end{itemize}

\textbf{Q1} and \textbf{Q2} have both been resolved for supercritical BGWPs with and without immigration.
Indeed, for a supercritical BGWP without immigration, Lockhart showed (under mild assumptions) that \textit{no parameter other than the offspring mean and variance can be estimated consistently} \cite{lockhart82} (adapted in Theorem \ref{thm:GWPNoConsistentEstimates}; see also \cite[Theorem 1.2]{guttorp91}). In addition, 
Harris \cite[Theorem~7.2]{harris48} and Heyde \cite[Theorem~4]{heyde70} showed that the estimator $\hat{m}_n := \sum_{i=1}^n Z_i / \sum_{i=0}^{n-1} Z_i$ is (weakly, resp.\ strongly) consistent for the offspring mean. 
A strongly consistent estimator for the offspring variance was established in \cite{heyde74}.
For supercritical BGWPs with immigration, Wei and Winnicki \cite[Proposition~3.3]{wei90} showed that no parameter other than the offspring mean and variance can be estimated consistently, thereby extending Lockhart's result (see also \cite[Theorem 4.5]{winnicki91} which considers the critical case). In addition, consistent estimators for the offspring mean and variance of these processes were provided in \cite[Theorem~2.2]{wei90} and \cite[Section~3]{heyde74}, respectively.
Consequently, questions about the existence of consistent estimators for supercritical BGWPs with and without immigration have been largely resolved.

In contrast, far less is known about the existence of consistent estimators for CBPs. In particular, there has been no attempt to address \textbf{Q1} in any of the three scenarios \textbf{S1--S3}.
In this paper, for each of the scenarios, we extend Lockhart's result for supercritical BGWPs (Theorem~\ref{thm:GWPNoConsistentEstimates}) to CBPs in Theorem~\ref{thm:KnownPhiNoConsistentEstimation} (\textbf{S1}), Theorem~\ref{thm:LinearlyDivisibleNoConsistentEstimation} (\textbf{S2}), and Theorem \ref{thm:KnownProgenitorsNoConsistentEstimation} (\textbf{S3}). 
In each scenario, we follow a common framework for proving the non-existence of consistent estimators, which is outlined in Section \ref{sec:Framework}.
In \textbf{S1}, our result directly extends the BGWP case: when the distribution of the control function is known, only the first two moments of the offspring distribution can be estimated. In \textbf{S2} and \textbf{S3}, however, the extension is no longer direct; indeed, the parameters of the control function must now be estimated, and since the control function is a family of distributions indexed by the population size $z$ (which allows for much richer behaviour), this leads to new challenges.
To help with these challenges, we establish our results under the assumption that the control function is \textit{linearly divisible} (see Definition \ref{def:LinearDivisibility}). Roughly speaking, our results provide conditions for non-existence of consistent estimators which are expressed in terms of the difference in the mean and variance of the next step size for a process with parameter $\btheta$ and a `perturbed' process with parameter $\btheta'$.
The key idea of the proof is showing that the difference in the one-step distributions of the original and perturbed processes is hidden by the randomness implied by the CLT as the population grows, in which case the parameter cannot be estimated consistently.

Our answers to \textbf{Q1} help to clarify which parameters might be possible to consistently estimate in each scenario \textbf{S1}--\textbf{S3}. In our answers to \textbf{Q2} we provide explicit expressions for consistent estimators under some additional regularity conditions.
In \textbf{S1} (Theorem \ref{thm:KnownPhiConsistentEstimators}), under minor regularity assumptions, we establish consistent estimators for the mean and variance of the offspring distribution (Theorem \ref{thm:KnownPhiConsistentEstimators}).
Consistent estimators for the offspring mean have been derived in \cite[Theorem 4.2]{gonzalez04} and \cite[Section~6]{sriram06}; however, both assume that the limit $\tau := \lim_{z\to\infty} \E(Z_1 | Z_0 = z) / z$ exists, and that $\lim_{n\to\infty} (\tau m)^{-n} Z_n$ converges to a non-degenerate random variable.
Our consistent estimator for the mean holds without these restrictive assumptions.
For the offspring variance, consistency had only been demonstrated in the special case of a deterministic control function \cite{gonzalez05}.
Our consistent estimator for the variance holds for a random control function.
Our results require a different proof approach than what has previously been used in the literature. In \textbf{S2} (Theorem \ref{thm:LinearMeanVarPhiConsistentEstimators}), under the assumption that $\E(\phi(z)) = \alpha z$ and $\V(\phi(z)) = \beta z$, we establish consistent estimators for the normalised conditional mean and variance of the next step, i.e.\ for $\mathbb{E}(Z_1 | Z_0=z)/z = m \alpha$ and $\text{Var}(Z_1 | Z_0=z)/z=\sigma^2 \alpha + m^2 \beta$, which are the only quantities that can be estimated consistently under the assumptions of Theorem \ref{thm:LinearlyDivisibleNoConsistentEstimation}.
In \textbf{S3} (Theorem \ref{thm:ObservedProgenitorsConsistentEstimators}), under similar assumptions as Theorem \ref{thm:LinearMeanVarPhiConsistentEstimators}, we construct consistent estimators for $m$, $\alpha$, $\sigma^2$, and $\beta$: the only quantities that can be estimated consistently under the assumptions of Theorem \ref{thm:KnownProgenitorsNoConsistentEstimation}.
These are the first estimators proven to be consistent under this observation scheme.

Our results have implications in population ecology. In this field, a common rule of thumb is that \textit{demographic stochasticity}---the randomness inherent in the independent reproduction and lifetime of individuals within a population---and \textit{environmental stochasticity}---the random changes in environmental conditions that impact a population as a whole---should not be estimated simultaneously from a single trajectory of population size counts \cite[Chapter~1.7.1]{lande03}. To the best of the authors' knowledge, the justification for this rule has only been empirical. In practice, ecologists use different observation schemes when estimating both types of stochasticity. For example, in studying a bird population, they might estimate demographic stochasticity by counting the clutch size and then treat these demographic parameters as known when estimating environmental stochasticity using population size counts.
In the context of CBPs, this principle translates to the idea that both the parameters of 
$\xi$ (demographic stochasticity) and those of $\phi(\cdot)$ (environmental stochasticity)
should not be estimated simultaneously from a single trajectory of population sizes. 
Our results provide theoretical support for this principle for supercritical CBPs (\textbf{Q1} for \textbf{S2}), and show that the parameters of $\xi$  and  $\phi(\cdot)$  can only be consistently estimated together under a more detailed observation scheme (\textbf{Q2} for \textbf{S3}), similar to some observation schemes used by ecologists. We believe our arguments can be extended to other stochastic population models such as diffusion models  \cite{lande03} and supercritical branching processes in a random environment \cite{kersting17}.

The paper is organised as follows. In Section 2, we outline the fundamental consistency results for supercritical BGWPs and illustrate them with an example. In Section 3.1 we present a general framework for establishing the non-existence of consistent estimators. In Sections 3.2--3.4 we present our answers to Questions \textbf{Q1} and \textbf{Q2} for scenarios \textbf{S1}--\textbf{S3}. In Section 4, we discuss future work and open questions. In Section 5 we gather the proofs for each of our non-existence results, while proofs for the consistency of estimators can be found in Section 6.

\section{Motivation}

\subsection{Modelling supercritical populations with BGWPs: Whooping cranes}\label{sec:WhoopingCraneExample}

\begin{figure}[t]
    \centering
    \includegraphics[width=8cm]{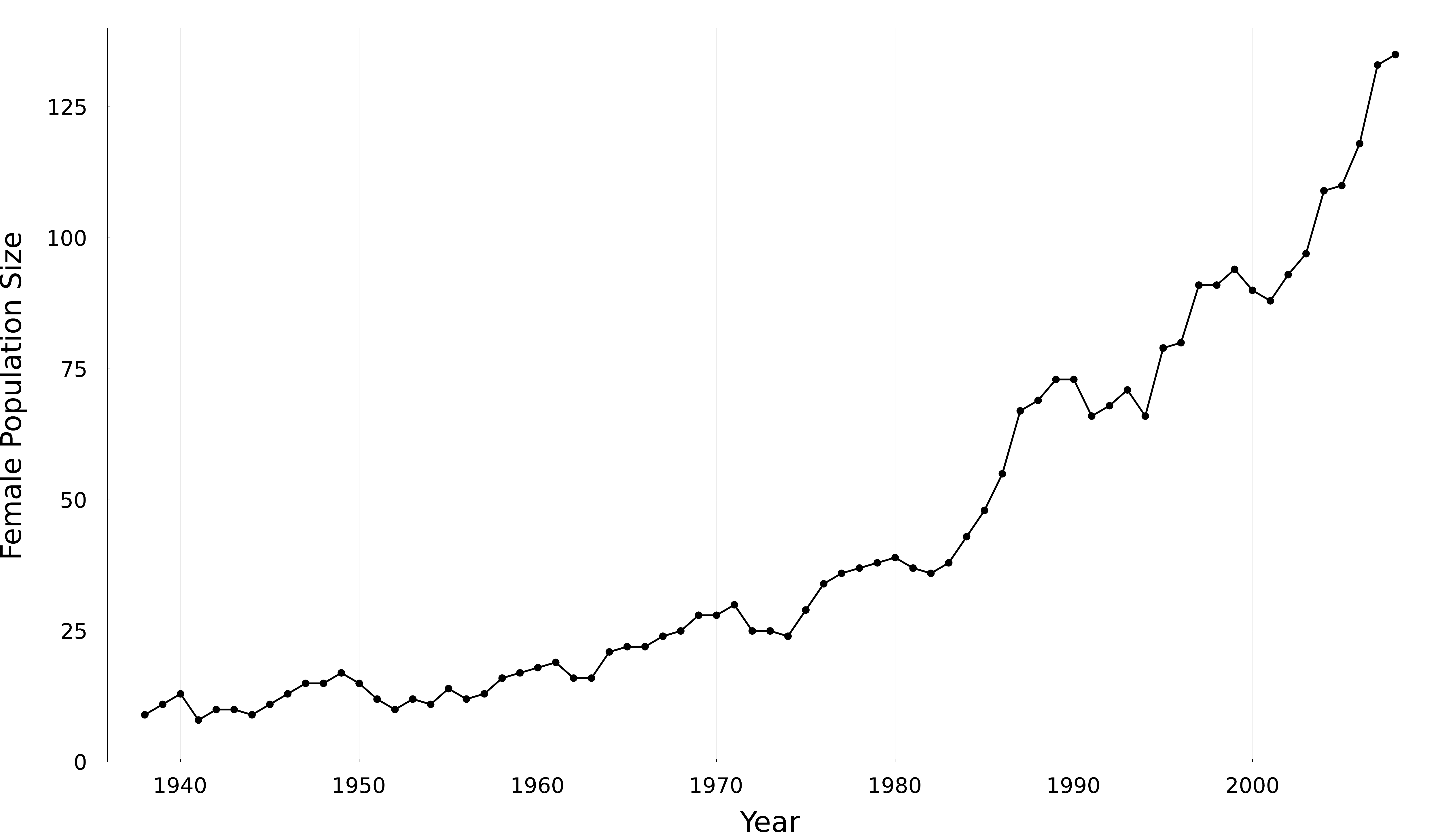}
   \caption{Female population sizes of the whooping crane Aransas-Wood Buffalo flock, 1938--2008 (see~\cite{butler13}). \label{fig:whooping_crane_totals}}
\end{figure}

Consider the annual population-size counts $(z_0, z_1, \dots, z_{70})$ of the females in the Aransas-Wood Buffalo whooping crane flock, displayed in Figure \ref{fig:whooping_crane_totals}.
Because the population growth appears approximately exponential, it is natural to model
this population with a Bienaymé-Galton-Watson branching process (BGWP) $\Xfull$, which is characterised by Equation \eqref{eqn:BGWP}. Recall that $\xi$ is known as the \textit{offspring distribution}, and let $p_k := \P(\xi = k)$ for $k \in \N_0$. 
We fit the data to two parametric BGWP models:
\begin{enumerate}
    \item[(i)] A model where only $p_0$, $p_1$, and $p_2$ can be non-zero, 
    \item[(ii)] A model where only $p_0$, $p_1$, $p_2$, and $p_3$ can be non-zero.
\end{enumerate}
Observe that Model (ii) is more general than Model (i).
Using the Markov property, the likelihood of $(z_0,\, z_1,\, \dots,\, z_{70})$ can be decomposed into a product of factors of the form
\[
    \P(X_n = z_n | X_{n-1} = z_{n-1}) = \P\Bigg( \sum_{i=1}^{z_{n-1}} \xi_{n, i} = z_n \Bigg),
    \quad n \in \{1, \dots, 70\}.
\]
These next step sizes are convolutions of independent random variables, whose generating functions can therefore be computed easily, and then inverted for example using the numerical technique of Abate and Whitt~\cite{abate92}. By maximising the resulting (approximate) likelihoods,
we obtain maximum likelihood estimates (MLEs) for each model:
\begin{enumerate}
    \item[(i)] $\hat{p}_0 = 0.1538$, $\hat{p}_1 = 0.6491$, and $\hat{p}_2 = 0.1971$,
    \item[(ii)] $\hat{p}_0 = 0.1538$, $\hat{p}_1 = 0.6491$, $\hat{p}_2 = 0.1971$, and $\hat{p}_3 = 0.0000$.
\end{enumerate}
We use parametric bootstrap \cite[Section 13.3]{tibshirani93} to obtain 95\% confidence intervals:
\begin{enumerate}
    \item[(i)] $p_0: (0.1006, 0.2150)$, $p_1: (0.5302, 0.7605)$, and $p_2: (0.1340, 0.2566)$, 
    \item[(ii)] $p_0: (0.0730, 0.2012)$, $p_1: (0.5605, 0.8618)$, $p_2: (0.0000, 0.2404)$, and $p_3: (0.0000, 0.0694)$.
\end{enumerate}
Observe that, while the parameter estimates are identical for both models, the confidence intervals for Model (ii) are wider than those for Model (i).
A key question in this paper is whether the width of these confidence intervals will shrink to zero as more data become available.
To investigate this question, in Figure \ref{fig:p_mse} we display the mean squared error (MSE) of the MLEs---again computed using parametric bootstrap---for different trajectory lengths. We observe that, for Model (i), the MSE for each estimate appears to be converging steadily to zero, whereas for Model (ii) this does not seem to be the case.

\begin{figure}[t]
    \centering
    \subfloat[
        Mean squared error for the MLEs of $p_0$, $p_1$, and $p_2$ in Model (i)
    ]{{ \includegraphics[width=10.5cm]{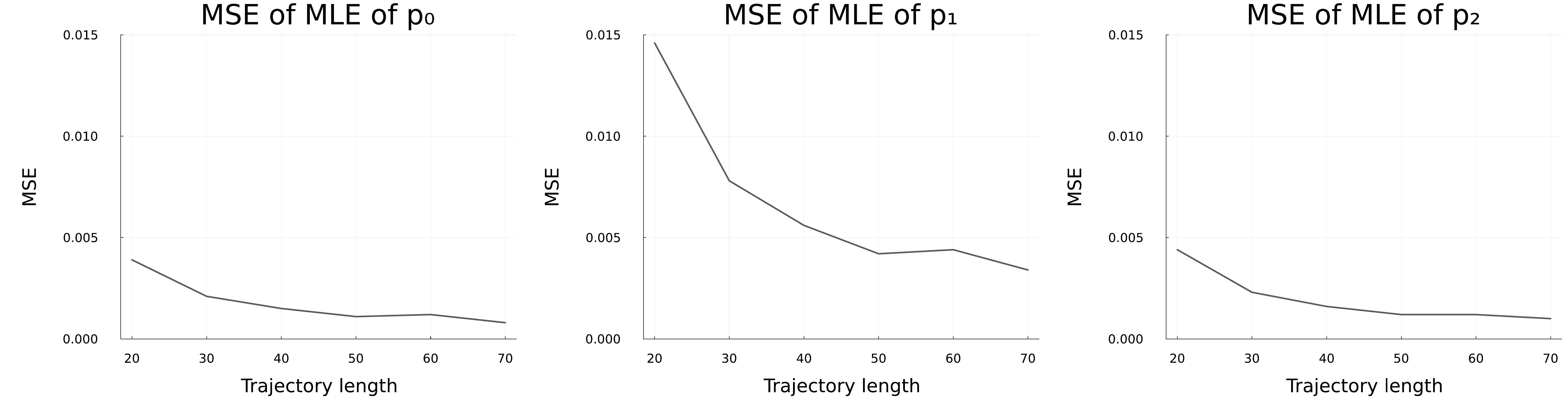} }}
    \\
    \subfloat[
        Mean squared error for the MLEs of $p_0$, $p_1$, $p_2$, and $p_3$ in Model (ii)
    ]{{ \includegraphics[width=14cm]{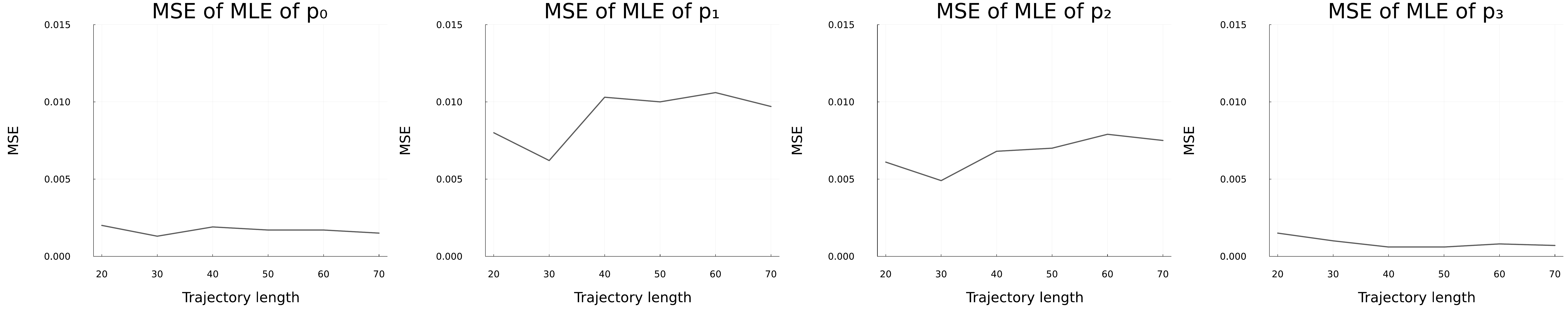} }}
    \caption{
        Mean squared error of the maximum likelihood estimates generated using parametric bootstrap for Models (i) and (ii) with 1000 simulations of trajectories of length 20, 30, \dots, 70.
    }
    \label{fig:p_mse}
\end{figure}

For a supercritical BGWP, it has been shown in \cite{heyde70} and \cite{heyde74}, respectively, that consistent estimators for the offspring mean $m$ and offspring variance $\sigma^2$ exist.
Theorem \ref{thm:GWPNoConsistentEstimates} below, adapted from \cite[Theorem 2]{lockhart82}, demonstrates that $m$ and $\sigma^2$ are the \textit{only} parameters of a supercritical BGWP that can be consistently estimated.
For Model (i), we can formulate consistent estimators for $p_0$, $p_1$, and $p_2$ in terms of those for $m$ and $\sigma^2$, by solving a system of three equations with three unknowns ($\hat{p}_1 + 2\hat{p}_2 = \hat{m}$,\; $\hat{p}_1 + 4\hat{p}_2 = \hat{\sigma}^2 + \hat{m}^2$,\; and $\hat{p}_0 + \hat{p}_1 + \hat{p}_2 = 1$).
This provides theoretical justification for why the MSE of the estimates in Model (i) appears to converge to zero in Figure \ref{fig:p_mse}.
For Model (ii), we aim to estimate $p_0$, $p_1$, $p_2$, and $p_3$ consistently using $\hat m$ and $\hat \sigma^2$,  however, we now have four unknowns but we still have only three equations in our system. Theorem \ref{thm:GWPNoConsistentEstimates} will then demonstrate that $p_0$, $p_1$, $p_2$, and $p_3$ \textit{cannot} all be consistently estimated simultaneously.

We now lay out the setting of Theorem \ref{thm:GWPNoConsistentEstimates}. Let $\Pi^{GW}$ be a set of supercritical ($m>1$) BGWPs in a given parametric family.
For example, in Model (ii), $\Pi$ is set of BGWPs  where only $p_0$, $p_1$, $p_2$, and $p_3$ can be non-zero and $m>1$. For ease of exposition, we assume that all offspring distributions $\xi$ of processes in $\Pi^{GW}$ are of lattice size one. 
We also let $\boldsymbol{\theta}$ be a function from $\Pi^{GW}$ to $\mathbb{R}^d$, representing the quantities of the model which we would like estimate. For example, in Model (ii), if we would like to estimate the full distribution then $\boldsymbol{\theta}=(p_0,p_1,p_2,p_3)$, whereas if we would like to estimate only the third moment then $\theta=p_1+8p_2+27p_3$.  With a slight abuse of language, we refer to $\boldsymbol{\theta}$ as the `parameters' of a process $\Z \in \Pi^{GW}$.
We say that $\hat \btheta_n$ is a weakly consistent estimator for $\btheta$ on the set of unbounded growth if \eqref{eqn:WeakConsistency} holds for all BGWPs $\Z \in \Pi^{GW}$.

\begin{theorem}\label{thm:GWPNoConsistentEstimates}
      If there exist two BGWPs $\Z, \X \in \Pi^{GW}$ with the same offspring mean and variance but with different parameters $\btheta_Z \neq \btheta_X$, then no weakly consistent estimator for $\btheta$ exists on the set of unbounded growth.
\end{theorem}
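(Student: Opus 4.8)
The plan is to argue by contradiction: suppose $\hat\btheta_n$ is a weakly consistent estimator for $\btheta$ on the set of unbounded growth, and construct a single event, of positive probability under both $\Z$ and $\X$, on which $\hat\btheta_n$ would have to converge to two different limits. The starting observation is that a supercritical BGWP with lattice size one, conditioned on $\{X_n\to\infty\}$, visits every sufficiently large integer value with positive probability, and once it is at a value $z$ its future evolution is that of a BGWP started afresh from $z$ (the branching/Markov property). So if $\Z$ and $\X$ have the same offspring mean $m$ and variance $\sigma^2$, then from a common large starting state $z$ the two processes have future trajectories whose distributions, while not identical, are "close" in a sense that the estimator cannot resolve.

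The key technical step is a coupling (or total-variation) argument. First I would fix a reference value $z_0$ that is reachable with positive probability for both processes, and compare $\P(\,\cdot\mid Z_0 = z_0)$ with $\P(\,\cdot\mid X_0 = z_0)$ on the $\sigma$-algebra generated by the whole future trajectory. These two measures are mutually singular on $\F_\infty$, so one cannot directly couple them; the standard workaround (this is the heart of Lockhart's method, which I would follow) is to stop: run both processes until the population first exceeds a large threshold $K$, then observe that, because the one-step transition $z\mapsto \sum_1^z\xi_i$ has the same mean $mz$ and variance $\sigma^2 z$ under both laws, a Lindeberg-type CLT makes the one-step increments from a large state asymptotically Gaussian with the same parameters, uniformly enough that the laws of the \emph{rescaled} trajectories from state $z$ onward are close in total variation as $z\to\infty$. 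Making this precise — quantifying how the total variation distance between the future laws (suitably truncated in time) tends to $0$ as the common starting state grows — is the main obstacle, and it is exactly where the "difference is hidden by the CLT" intuition must be turned into an inequality. I expect to need a time-dependent truncation: compare only the first $f(z)$ generations after hitting state $z$, with $f(z)\to\infty$ slowly enough that the CLT approximation stays uniform.

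Given such a coupling bound, the contradiction is assembled as follows. Weak consistency for $\Z$ forces $\P(|\hat\btheta_n - \btheta_Z| > \varepsilon \mid Z_0=z_0)\to 0$, and likewise for $\X$ with limit $\btheta_X$; since $|\btheta_Z - \btheta_X| =: 3\varepsilon > 0$, the events $A_n := \{|\hat\btheta_n - \btheta_Z|\le\varepsilon\}$ and $B_n := \{|\hat\btheta_n - \btheta_X|\le\varepsilon\}$ are disjoint, with $\P(A_n\mid Z_0=z_0)\to 1$ and $\P(B_n\mid X_0=z_0)\to 1$. Now I would transport this along the trajectory: conditionally on the process reaching a large state $z$ at some (random) time $\tau_z$, apply the strong Markov property and the coupling above to show that $\P(B_n\text{-type event in the shifted clock}\mid \text{at state }z)$ under the $\Z$-law is within $o(1)$ of the same probability under the $\X$-law as $z\to\infty$. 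On $\{Z_n\to\infty\}$ the process does reach arbitrarily large states, so for a suitable sequence $z_k\to\infty$ and times $n_k\to\infty$ we get both $\P(A_{n_k})$ and $\P(B_{n_k})$ bounded away from $0$ under the $\Z$-law, contradicting disjointness of $A_n$ and $B_n$. I would also need to check the usual measurability/technical points — that $\{Z_n\to\infty\}$ has positive probability (supercriticality), that the hitting times $\tau_z$ are finite on this event, and that the estimator, being a fixed measurable function of the observed prefix, behaves consistently under the shifted process — but these are routine once the coupling estimate is in hand.
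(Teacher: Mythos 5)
Your high-level plan matches the intuition behind the paper's argument (a BGWP is the special case $\phi(z)=z$ of Theorem \ref{thm:KnownPhiNoConsistentEstimation}, proved via the framework of Section \ref{sec:Framework}): match the first two moments, show the transition laws from a large common state are close in total variation, and contradict consistency. But two steps in the middle are genuinely problematic. First, your assertion that $\P(\cdot\mid Z_0=z_0)$ and $\P(\cdot\mid X_0=z_0)$ are mutually singular on $\F_\infty$ is false, and it contradicts the estimate you then set out to prove: the paper's Lemma \ref{lma:CBPTVDLimitApproachesZero} (fed by Lemma \ref{lma:KnownPhiOneStepTVDBound}) shows that the total variation distance between the laws of the \emph{entire infinite} trajectories tends to $0$ as the common starting state grows. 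Mutual singularity would mean this distance equals $1$, and since the finite-horizon total variation distances increase to the infinite-horizon one, one could then asymptotically distinguish $\Z$ from $\X$ from finite prefixes with probability tending to one --- exactly what the theorem denies. Second, the time truncation you introduce to work around this (non-existent) obstruction opens a real gap: you compare only the first $f(z)$ generations after hitting state $z$, with $f(z)$ growing slowly enough for the CLT bound to be uniform, yet the final contradiction needs the estimator started from state $z$ to have already converged to within $\varepsilon$, with high probability, by time $f(z)$. Weak consistency gives no control whatsoever on the rate of convergence as a function of the initial state, so there is no guarantee that any choice of $f(z)$ is simultaneously small enough and large enough; this diagonalisation is not a routine technical point.

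The paper's route avoids both issues. The one-step bound $\| \L_{Z_1|Z_0=z} - \L_{X_1|X_0=z} \|_{TV} = O(z^{-1/2})$ (Petrov's theorem, using equal mean and variance, finite third moments, and lattice size one) is propagated to the \emph{infinite} horizon by the telescoping inequality of Lemma \ref{lma:TVDDecreasingBound} combined with the fact that, above a threshold, the population grows geometrically with high probability, so the accumulated one-step errors form a convergent geometric-type series (Lemma \ref{RecursiveEquationLimit}); no truncation in time is needed. The contradiction is then assembled without hitting times or clock shifts: because consistency is assumed for every initial state, one takes $z_0$ large from the outset, passes to a strongly consistent subsequence, and observes that consistency forces $\| \L_{\Z} - \L_{\X} \|_{TV} \geq \P(X_n\to\infty\mid X_0=z_0)$, whose right side tends to $1$ while the left side tends to $0$ (Proposition \ref{ppn:CBPNoConsistentEstimation}). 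If you replace your truncated comparison with a direct bound on the infinite-horizon total variation distance along these lines, the rest of your outline goes through.
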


Let us return to Model (ii) in the whooping crane example, with $\btheta := (p_0,\, p_1,\, p_2,\, p_3)$.
Note that if $\X$ is the BGWP with $p_{0,X} = 0.1538$, $p_{1,X} = 0.6491$, $p_{2,X} = 0.1971$ and $p_{3,X} = 0$ (matching the MLEs found above), and $\Z$ is a BGWP with $p_{0,Z} = 0.0891$, $p_{1,Z} = 0.8432$, $p_{2,Z} = 0.003$ and $p_{3,Z} = 0.0647$,
then both processes have the same mean and variance for their offspring distributions. Thus, by Theorem~\ref{thm:GWPNoConsistentEstimates}, it is not possible to consistently estimate $\btheta$.
This provides a theoretical justification for why the MSE of the estimates in Model (ii) appears \emph{not} to converge to zero in Figure~\ref{fig:p_mse}.

To understand the intuition behind Theorem 2.1, we note that the observations $(z_0,\, z_1,\, \dots,\, z_{70})$ are not taken from the distribution of $\xi$ itself. Instead, they are taken from the distribution of the next-step size $(Z_n | Z_{n-1} = z_{n-1})$, which corresponds to the convolution $\sum_{i=1}^{z_{n-1}} \xi_{n, i}$.
Given that the population size is growing on $\{Z_n\to\infty\}$, and the next-step size distribution is the sum of independent copies of $\xi$, the central limit theorem applies, and thus all information but the mean and variance of $\xi$ is eventually hidden.

\begin{figure}[t]
    \centering
    \subfloat[
        MSE for the MLEs of $m$ and $\sigma^2$ in Model (i)
    ]{{ \includegraphics[width=7.8cm]{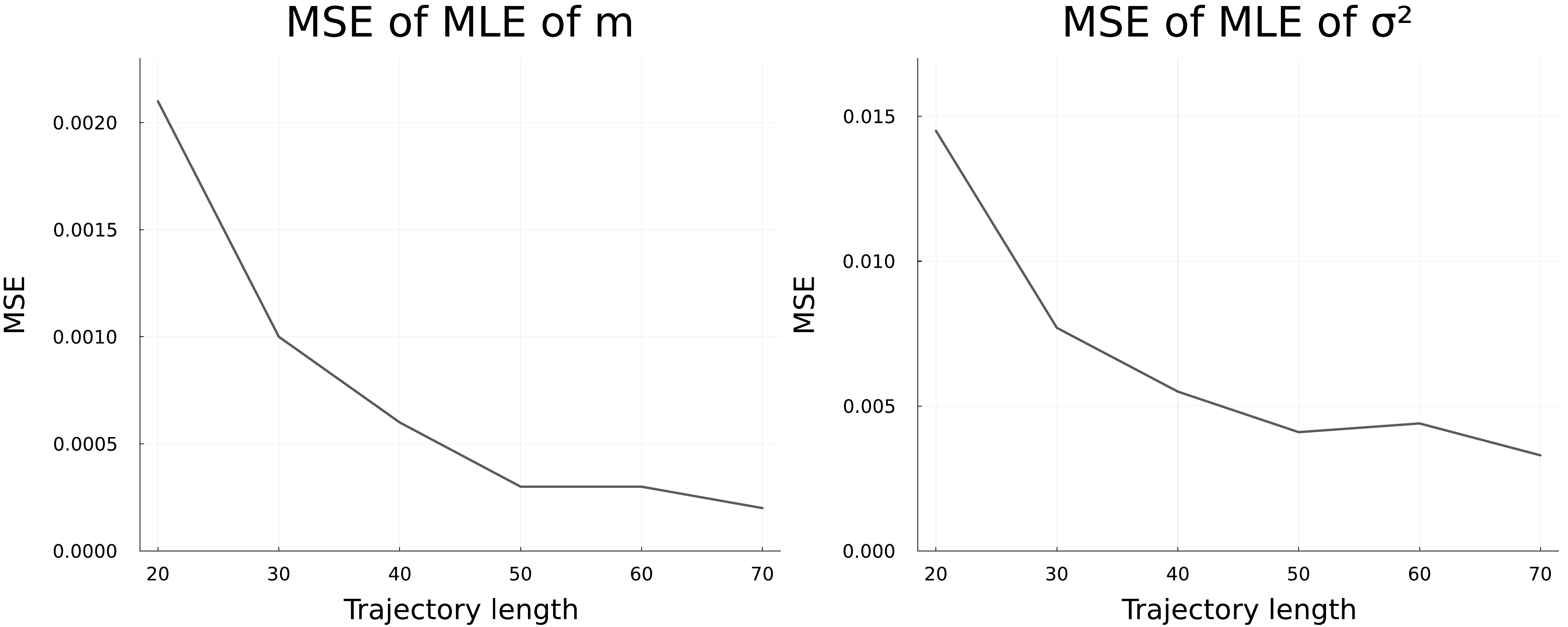} }}
    \hspace{0.3cm}
    \subfloat[
        MSE for the MLEs of $m$ and $\sigma^2$ in Model (ii)
    ]{{ \includegraphics[width=7.8cm]{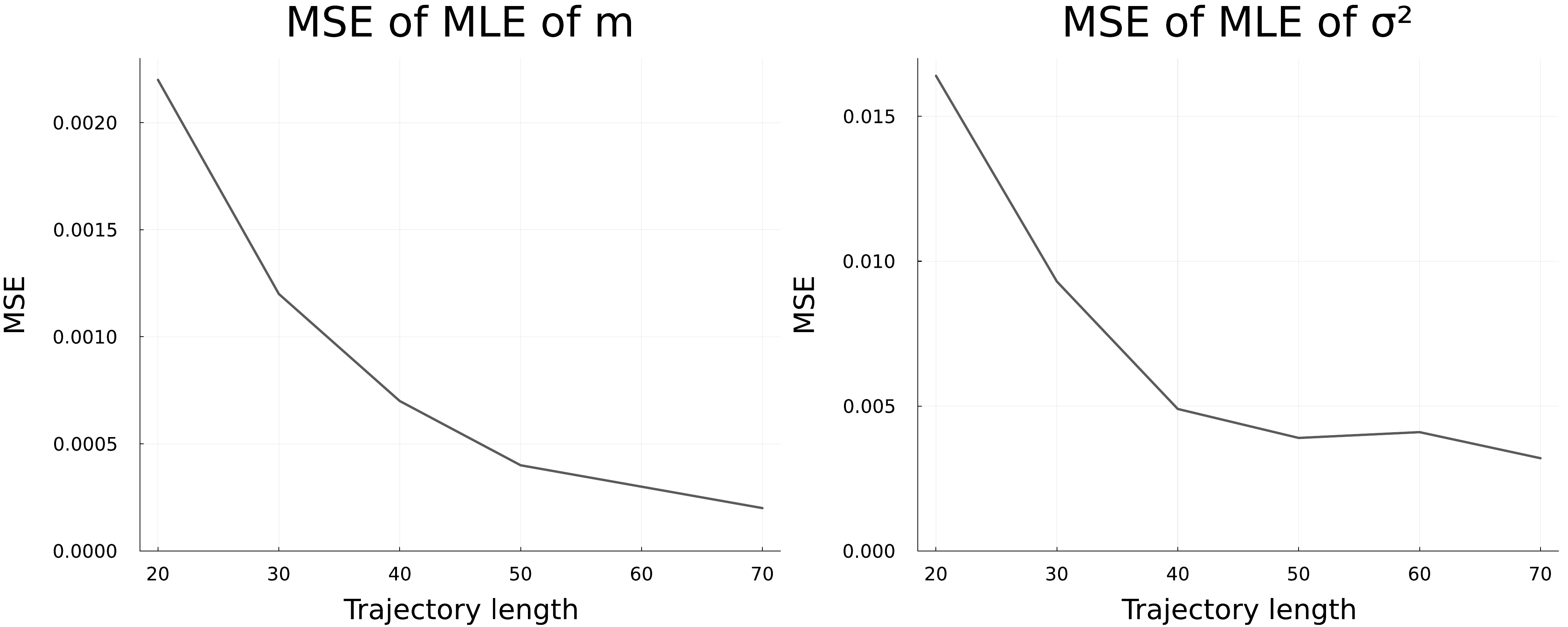} }}
    \caption{
        Mean squared error of the maximum likelihood estimates of $m$ and $\sigma^2$ generated using parametric bootstrapping for Models (i) and (ii); 1000 simulations at trajectory length 20, 30, \dots, 70.
    }
    \label{fig:m_sigma2_mse}
\end{figure}

Despite the fact that the MLEs for $p_0$, $p_1$, $p_2$, and $p_3$ in Model (ii) are not consistent, we can use these estimators to construct a consistent estimator for $m$ and $\sigma^2$ (i.e. $\hat m=\hat p_1+2\hat p_2+3 \hat p_3$ and $\hat{\sigma}^2=\hat p_1+4\hat p_2+9 \hat p_3-\hat m^2$). Figure \ref{fig:m_sigma2_mse} depicts the MSE of the resulting MLEs for $m$ and $\sigma^2$ in Models (i) and (ii), which converges to zero for both models.
Note that for $\btheta = ({m}, {\sigma}^2)$, the conditions of Theorem 2.1 are not satisfied, therefore a consistent estimator for the mean and variance \textit{could potentially} exist (and it does indeed, see \cite{heyde70} and \cite{heyde74}, respectively).

\subsection{Modelling with CBPs}

Controlled branching processes (CBPs), defined by the recursive equation \eqref{eqn:CBP}, are an extension of BGWPs that can capture complex characteristics of biological populations, such as non-exponential growth and dependencies between individuals. 
Recent advances for CBPs propose new methods for estimating many parameters simultaneously, and possibly even the entire distribution of the process \cite{gonzalez13, gonzalez16, gonzalez21}. The focus in these cases is on using Bayesian and algorithmic approaches to obtain parameter estimates, rather than on analysing their asymptotic properties. 
While Theorem \ref{thm:GWPNoConsistentEstimates} establishes a theoretical foundation for understanding the limits of consistent estimation in BGWPs, no analogous framework has been developed for CBPs to assess whether consistent estimators exist in these models.
In the next section, we establish this framework.

\section{Estimation for supercritical CBPs}\label{sec:CBPEstimation}
\subsection{A framework for proving the non-existence of consistent estimators}\label{sec:Framework}

To extend the results from BGWPs to CBPs, we start by defining a class of CBPs with positive probability of unbounded growth. For a given CBP $\Z$ with $Z_0 = z_0 \in \N_1$, such that
for all $n \geq 1$,
\[
    Z_n = \sum_{i=1}^{\phi(Z_{n-1})} \xi_{n,i},
\]
we denote the offspring mean and variance by $m := \E\xi$ and $\sigma^2:=\V(\xi)$, assuming throughout that $\sigma^2 > 0$, and we denote the mean and variance of the control function by $\varepsilon(z) := \E\phi(z)$ and  $\nu^2(z) := \V(\phi(z))$.
Following  \cite[p.76]{gonzalez18}, we define the \textit{mean growth rate} of the process $\Z$ at population size $z$ as
\[
    \tau(z) := \frac{1}{z} \E(Z_1 | Z_0 = z) = \frac{\varepsilon(z)}{z} \cdot m,
\]
and call $\Z$ \textit{supercritical} if
\begin{equation}\label{eqn:supercriticalCBP}
    \liminf_{z\to\infty} \tau(z) > 1.
\end{equation}
Recall that $\Z$ is said to grow unboundedly if $Z_n \to \infty$ as $n \to \infty$.
Unlike BGWPs, supercritical CBPs do not necessarily have a positive probability of unbounded growth (see \cite[Example 3.1]{gonzalez18}).
Theorem 3.2 of \cite{gonzalez18} provides a sufficient condition for $\P(Z_n\to\infty) > 0$, namely that there exist $a, b > 0$ such that
\begin{equation}\label{eqn:BoundedControlFunctionMoments}
    \sup_{z \geq 1} \left\{ \frac{\varepsilon(z)}{z} \right\} \leq a \quad \text{and} \quad
    \sup_{z \geq 1} \left\{ \frac{\nu^2(z)}{z} \right\} \leq b.
\end{equation}
In fact, \cite[Theorem 3.2]{gonzalez18} shows that under \eqref{eqn:BoundedControlFunctionMoments}, $\P(Z_n \to \infty) \to 1$ as the initial population size $z_0$ approaches infinity.

Similar to Section \ref{sec:WhoopingCraneExample}, we let $\Pi$ be a set of supercritical CBPs that satisfy \eqref{eqn:BoundedControlFunctionMoments} 
in a given parametric family, and we let $\boldsymbol{\theta}$ be a function from $\Pi$ to $\mathbb{R}^d$, representing the quantities we would like estimate (referred to as `the parameters').
We say that $\hat \btheta_n:=\hat \btheta_n(Z_0,\, Z_1,\, \ldots,\, Z_n)$ is a weakly consistent estimator for $\btheta$ on the set of unbounded growth if \eqref{eqn:WeakConsistency} holds for all CBPs $\Z \in \Pi$, over every initial population size $z\in\N_1$.

We now relate the total variation distance (TVD) between the distributions of two processes $\Z$, $\X\in\Pi$,
 \[
    || \L_{\Z} - \L_{\X} ||_{TV} := \sup_{C \subseteq \N_0^{\infty}} | \P(\Z \in C) - \P(\X \in C) |,
\]
to the non-existence of consistent estimators for $\btheta$.
If a consistent estimator $\hat \btheta_n$ exists, then we can solve the following simple classification problem: \textit{Given an infinite trajectory generated from either $\Z$ or $\X$ with $\btheta_Z \neq \btheta_X$, can we identify which process generated the trajectory with arbitrarily high accuracy?} If a consistent estimator exists, then the answer is positive. This is because if $\hat \btheta_n$ converges to $\btheta_Z$ (resp. to $\btheta_X$), then we know that the trajectory was generated by $\Z$ (resp. by $\X$). However, if $|| \L_{\Z} - \L_{\X} ||_{TV}<1$, then it is not possible to always make the correct classification. This can be seen through an analogy with the univariate setting: given an observation $x\in\mathbb{R}$, we want to determine whether this observation was generated from either $X_1 \sim f_{\theta_1}(x)$  or $X_2 \sim f_{\theta_2}(x)$. If $|| \L_{X_1} - \L_{X_2} ||_{TV} < 1$, as on the left-hand-side of Figure \ref{fig:TVDPicture}, it is not always possible to correctly classify the observation, whereas if $|| \L_{X_1} - \L_{X_2} ||_{TV} = 1$, as on the right-hand-side of Figure \ref{fig:TVDPicture}, it is always possible. Coming back to CBPs, if we are not able to classify an infinitely long trajectory as coming from $\Z$ or $\X$ (i.e.\ because $|| \L_{\Z} - \L_{\X} ||_{TV}<1$), then no consistent estimator exists for $\btheta$.

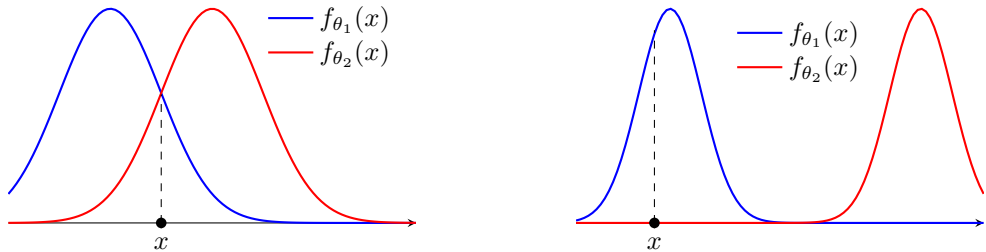
\begin{figure}[t]
    \centering
    \subfloat[]{{
        \begin{tikzpicture}
            \begin{axis}[
                domain=-3:5,
                samples=100,
                axis x line=middle,
                axis y line=none, % Remove y-axis
                legend pos=north east,
                legend style={draw=none},
                width=7cm, height=5cm,
                xtick={0},
                xticklabels={$x$},
                ytick=\empty
            ]
                \addplot[blue, thick] {exp(-0.5*(x+1)^2)};
                \addlegendentry{$f_{\theta_1}(x)$}
                \addplot[red, thick] {exp(-0.5*(x-1)^2)};
                \addlegendentry{$f_{\theta_2}(x)$}
                
                % Highlight x_0
                \draw[dashed] (axis cs:0,0) -- (axis cs:0,0.6);
                \node[anchor=north] at (axis cs:0,-0.05) {};
        
                % Big dot at former y-axis
                \fill (axis cs:0,0) circle[radius=2pt];
            \end{axis}
        \end{tikzpicture}
    }}
    \hspace{2cm}
    \subfloat[]{{
        \begin{tikzpicture}
            \begin{axis}[
                domain=-1:12,
                samples=100,
                axis x line=middle,
                axis y line=none, % Remove y-axis
                legend style={at={(0.55,0.6)}, anchor=south},
                legend style={draw=none},
                width=7cm, height=5cm,
                xtick={1.5},
                xticklabels={$x$},
                ytick=\empty
            ]
                \addplot[blue, thick] {exp(-0.5*(x-2)^2)};
                \addlegendentry{$f_{\theta_1}(x)$}
                \addplot[red, thick] {exp(-0.5*(x-10)^2)};
                \addlegendentry{$f_{\theta_2}(x)$}
                
                % Highlight x_0
                \draw[dashed] (axis cs:1.5,0) -- (axis cs:1.5,0.9);
                \node[anchor=north] at (axis cs:1.5,-0.05) {};
    
                % Big dot at former y-axis
                \fill (axis cs:1.5,0) circle[radius=2pt];
            \end{axis}
        \end{tikzpicture}
    }}
    \caption{Left: Two overlapping densities with $|| \L_{X_1} - \L_{X_2} ||_{TV}<1$; we do not know if the observation $x$ was generated from $f_{\theta_1}$ or $f_{\theta_2}$. Right: Two non-overlapping densities with $|| \L_{X_1} - \L_{X_2} ||_{TV}=1$; we can be certain that the observation $x$ was generated from $f_{\theta_1}$.}
    \label{fig:TVDPicture}
\end{figure}

For two supercritical CBPs in $\Pi$,
the following result relates the total variation distance between the distributions of the two processes and the total variation distance between the distributions of their one-step transitions.

\begin{lemma}\label{lma:CBPTVDLimitApproachesZero}
    Let $\Z, \X \in \Pi$ be two CBPs satisfying
    \begin{equation}\label{eqn:PolynomialDecreasingBound}
        ||\L_{Z_1|Z_0=z_0} - \L_{X_1|X_0=z_0}||_{TV} = O(z_0^{-q})\quad\textrm{for some $q > 0$.}
    \end{equation}
    Then $\lim_{z_0\to\infty} || \L_{\Z} - \L_{\X} ||_{TV} = 0$.
\end{lemma}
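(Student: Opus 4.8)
The plan is to compare the laws of $\Z$ and $\X$ (both started from $z_0$) through an explicit coupling built step by step from maximal couplings of their one-step transition kernels, and to show that the probability the coupled trajectories ever separate tends to $0$ as $z_0\to\infty$; since $||\L_{\Z}-\L_{\X}||_{TV}$ is bounded above by that separation probability, the lemma follows. Concretely, I would construct a pair of processes $(\tilde Z_n,\tilde X_n)_{n\ge0}$ with $\tilde Z_0=\tilde X_0=z_0$ such that, as long as the two coordinates still coincide at a value $w$, the next pair is drawn from a maximal coupling of $\L_{Z_1\mid Z_0=w}$ and $\L_{X_1\mid X_0=w}$, and thereafter they run independently; then $\tilde Z\sim\L_{\Z}$ and $\tilde X\sim\L_{\X}$. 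Writing $A_{n-1}$ for the (hence $\F_{n-1}$-measurable) event that the coordinates coincide through time $n-1$, and $g(w):=||\L_{Z_1\mid Z_0=w}-\L_{X_1\mid X_0=w}||_{TV}$, a decomposition over the first time the coordinates separate, together with the defining property of maximal couplings, gives $||\L_{\Z}-\L_{\X}||_{TV}\le\P(\tilde Z\ne\tilde X)=\sum_{n\ge1}\E[\1(A_{n-1})\,g(\tilde Z_{n-1})]$.

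The second ingredient is a quantitative geometric growth estimate for a supercritical CBP satisfying \eqref{eqn:BoundedControlFunctionMoments}. Using \eqref{eqn:supercriticalCBP}, fix $\delta>0$ and $z^*\ge1$ with $\tau(z)\ge1+\delta$ for all $z\ge z^*$, and set $c:=1+\delta/2$. From the random-sum identity $\V(Z_1\mid Z_0=z)=\varepsilon(z)\sigma^2+\nu^2(z)m^2$ and \eqref{eqn:BoundedControlFunctionMoments}, the one-step conditional variance is at most $(a\sigma^2+bm^2)z$, while the one-step conditional mean is at least $(1+\delta)z$ once $z\ge z^*$; since $z_0c^n=(1+\delta/2)\,z_0c^{n-1}$, Chebyshev's inequality then shows that on $\{Z_{n-1}\ge z_0c^{n-1}\}$ (which forces $Z_{n-1}\ge z^*$ when $z_0\ge z^*$) one has $\P(Z_n<z_0c^n\mid\F_{n-1})=O\big(1/(z_0c^{n-1})\big)$, with an $O$-constant independent of $n$ and of $z_0$. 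Summing this over the first time the geometric barrier $z_0c^n$ is breached yields $\P(G\mid Z_0=z_0)\ge1-C/z_0$, where $G:=\{Z_n\ge z_0c^n\ \text{for all }n\ge0\}$; the same bound holds for the coupled copy $\tilde Z$, which has law $\L_{\Z}$.

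To conclude, I combine the two. By \eqref{eqn:PolynomialDecreasingBound} there are constants $C$ and $z_1$ with $g(z)\le Cz^{-q}$ for all $z\ge z_1$, so once $z_0\ge\max(z^*,z_1)$ every barrier value $z_0c^{n-1}$, $n\ge1$, exceeds $z_1$, and on $G$ one has $g(\tilde Z_{n-1})\le C(\tilde Z_{n-1})^{-q}\le C(z_0c^{n-1})^{-q}$. Splitting each term $\E[\1(A_{n-1})\,g(\tilde Z_{n-1})]$ of the series above according to $G$ and $G^c$: the parts restricted to $G^c$ together amount to $\P(\{\tilde Z\ne\tilde X\}\cap G^c)\le\P(G^c)=O(1/z_0)$, while the parts restricted to $G$ are bounded, term by term, by $C(z_0c^{n-1})^{-q}$ (using $\1(G)\le\1(\tilde Z_{n-1}\ge z_0c^{n-1})\in\F_{n-1}$ to move the indicator inside the conditional expectation). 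Hence $||\L_{\Z}-\L_{\X}||_{TV}\le C\sum_{n\ge1}(z_0c^{n-1})^{-q}+O(1/z_0)=\tfrac{C}{1-c^{-q}}\,z_0^{-q}+O(1/z_0)\to0$ as $z_0\to\infty$.

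I expect the crux to be the growth estimate and the way it is threaded into the coupling bound. The naive simplification $\E[\1(A_{n-1})\,g(\tilde Z_{n-1})]\le\E[g(Z_{n-1})]$ discards exactly the information that is needed and leaves a series one cannot obviously control; one must instead retain, on the high-probability event $G$, the geometric lower bound on $\tilde Z_{n-1}$, so that $g(\tilde Z_{n-1})$ becomes summable in $n$, and absorb the low-probability complement $G^c$ wholesale into $\P(G^c)$. Turning the qualitative supercriticality assumption \eqref{eqn:supercriticalCBP} into the barrier-crossing estimate above --- with the right dependence on $z_0$ and a per-step failure probability summable in $n$ --- is what takes care (chiefly in the first-passage decomposition and in checking that the barrier stays above $z^*$), while the maximal-coupling construction and the geometric series are routine.
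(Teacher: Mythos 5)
Your proof is correct. It rests on exactly the same estimates as the paper's but packages them probabilistically rather than analytically: you build an explicit step-by-step maximal coupling of the two trajectories and bound $||\L_{\Z}-\L_{\X}||_{TV}$ by the probability of ever decoupling, whereas the paper proves a one-step peeling inequality for the TVD of finite-dimensional laws (Lemma \ref{lma:TVDDecreasingBound}, whose proof via the sum representation of the TVD is precisely the computation your maximal coupling performs) and iterates it into a recursively defined family of bounding functions $\K_k$ whose limit is controlled by Lemma \ref{RecursiveEquationLimit}. The substance coincides in both cases: the one-step bound $O(z^{-q})$ evaluated along a geometric barrier $z_0c^{n-1}$, Chebyshev's inequality giving a per-step failure probability $O(1/(z_0c^{n-1}))$, and two geometric series in $n$. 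What your organisation buys is cleaner bookkeeping --- a single global good event $G$ absorbing all Chebyshev failures at once into $\P(G^c)=O(1/z_0)$, and an explicit rate $O(z_0^{-q}+z_0^{-1})$ rather than bare convergence to zero (the paper's recursion, once unrolled as in Lemma \ref{RecursiveEquationLimit}, yields the same two series). You also correctly identified and handled the one delicate point, replacing $\1(G)$ by the $\F_{n-1}$-measurable indicator $\1(\tilde Z_{n-1}\ge z_0c^{n-1})$ before conditioning; this plays the role of the paper's truncation at $N=\lfloor\alpha t\,u_{j-1}\rfloor$ inside the induction step.
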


In the context of our discussion above, note that if $\lim_{z_0\to\infty} || \L_{\Z} - \L_{\X} ||_{TV} = 0$, then $|| \L_{\Z} - \L_{\X} ||_{TV} < 1$ for all sufficiently large $z_0$.
By combining Lemma \ref{lma:CBPTVDLimitApproachesZero} with the above relationship between the total variation distance and the non-existence of consistent estimators, we obtain the next result, which we will further refine in Sections \ref{sec:CBPEstimationKnownPhi}--\ref{sec:CBPEstimationObservedProgenitors} (see Theorems \ref{thm:KnownPhiNoConsistentEstimation}, \ref{thm:LinearlyDivisibleNoConsistentEstimation}, and~\ref{thm:KnownProgenitorsNoConsistentEstimation}).

\begin{proposition}\label{ppn:CBPNoConsistentEstimation}
    If there exist two CBPs $\Z, \X \in \Pi$ satisfying \eqref{eqn:PolynomialDecreasingBound} but with $\btheta_Z \neq \btheta_X$, then no weakly consistent estimator for $\btheta$ exists on the set of unbounded growth.
\end{proposition}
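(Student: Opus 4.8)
The plan is to combine Lemma~\ref{lma:CBPTVDLimitApproachesZero} with the informal classification argument already sketched in the text and turn it into a rigorous contradiction. Suppose, for contradiction, that a weakly consistent estimator $\hat\btheta_n$ for $\btheta$ exists on the set of unbounded growth, and let $\Z,\X\in\Pi$ be the two CBPs hypothesised in the statement: they satisfy the polynomial TVD decay \eqref{eqn:PolynomialDecreasingBound} on one-step transitions, yet $\btheta_Z\neq\btheta_X$. Fix $\delta := |\btheta_Z - \btheta_X| > 0$ and set $\varepsilon := \delta/3$. The idea is that $\hat\btheta_n$ must, with probability tending to one, land within $\varepsilon$ of $\btheta_Z$ under $\P_Z$ and within $\varepsilon$ of $\btheta_X$ under $\P_X$; since the balls $B(\btheta_Z,\varepsilon)$ and $B(\btheta_X,\varepsilon)$ are disjoint, this would let us distinguish the two laws with arbitrarily small error — contradicting $\|\L_\Z - \L_\X\|_{TV} < 1$, which Lemma~\ref{lma:CBPTVDLimitApproachesZero} guarantees for all sufficiently large $z_0$.

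Concretely, I would proceed as follows. First, invoke Lemma~\ref{lma:CBPTVDLimitApproachesZero}: since $\Z$ and $\X$ satisfy \eqref{eqn:PolynomialDecreasingBound}, we have $\lim_{z_0\to\infty}\|\L_\Z - \L_\X\|_{TV} = 0$, so we may fix an initial value $z_0\in\N_1$ for which $\|\L_\Z - \L_\X\|_{TV} \le 1/4$, say. Second, because weak consistency on $\{Z_n\to\infty\}$ means \eqref{eqn:WeakConsistency} holds conditionally on unbounded growth, I need to handle the conditioning carefully: define the events $A_Z^{(n)} := \{|\hat\btheta_n - \btheta_Z| \le \varepsilon\}$ and similarly $A_X^{(n)}$ for the process $\X$. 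Weak consistency on the set of unbounded growth gives $\P_Z(A_Z^{(n)}\mid Z_n\to\infty)\to 1$ and $\P_X(A_X^{(n)}\mid X_n\to\infty)\to 1$. Under the bounded-moment condition \eqref{eqn:BoundedControlFunctionMoments} both processes have $\P(\cdot_n\to\infty) > 0$, so both conditional probabilities are well-defined; replacing $z_0$ by a possibly larger value keeps the TVD bound (it only decreases) while, by \cite[Theorem 3.2]{gonzalez18}, pushing $\P_Z(Z_n\to\infty)$ and $\P_X(X_n\to\infty)$ as close to $1$ as desired — so we may also assume both survival probabilities exceed, say, $7/8$.

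Third, I would assemble the contradiction. Consider the tail event $D := A_Z^{(\infty)} := \{\hat\btheta_n \to \btheta_Z\}$ — or, to stay at finite $n$, pick $n$ large enough that $\P_Z(A_Z^{(n)}) \ge 1 - 1/8$ and $\P_X(A_X^{(n)}) \ge 1 - 1/8$ (combining the conditional convergence with the survival bound). Since $A_Z^{(n)}$ is a measurable event depending only on the trajectory $(Z_0,\dots,Z_n)$, hence on $\L_\Z$ versus $\L_\X$ restricted to cylinder sets, the definition of total variation distance gives
\[
  \P_Z(A_Z^{(n)}) - \P_X(A_Z^{(n)}) \le \|\L_\Z - \L_\X\|_{TV} \le 1/4.
\]
But $A_Z^{(n)}$ and $A_X^{(n)}$ are disjoint (balls of radius $\varepsilon = \delta/3$ around points $\delta$ apart), so $\P_X(A_Z^{(n)}) \le 1 - \P_X(A_X^{(n)}) \le 1/8$ is wrong-signed for what we want; instead I use $\P_Z(A_Z^{(n)}) \ge 7/8$ and $\P_X(A_Z^{(n)}) \le \P_X((A_X^{(n)})^c) \le 1/8$, whence $\P_Z(A_Z^{(n)}) - \P_X(A_Z^{(n)}) \ge 7/8 - 1/8 = 3/4 > 1/4 \ge \|\L_\Z - \L_\X\|_{TV}$, contradicting the displayed inequality. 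Hence no weakly consistent estimator can exist.

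The main obstacle — and the only place the proof needs genuine care rather than bookkeeping — is reconciling the \emph{conditional} form of weak consistency (convergence on $\{Z_n\to\infty\}$) with the \emph{unconditional} total-variation comparison of the full laws $\L_\Z,\L_\X$ on $\N_0^\infty$. The resolution is the dual use of \cite[Theorem 3.2]{gonzalez18}: choosing $z_0$ large simultaneously drives $\|\L_\Z - \L_\X\|_{TV}\to 0$ (Lemma~\ref{lma:CBPTVDLimitApproachesZero}) \emph{and} drives the survival probabilities of both processes to $1$, so the conditional and unconditional probabilities of $A_Z^{(n)}$ differ by a vanishing amount and the TVD bound can be made to beat the $3/4$ separation. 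One should also note that the estimator $\hat\btheta_n$ is a fixed measurable function of the observed trajectory that does not know which process is in force, so the same functional form is applied under both $\P_Z$ and $\P_X$ — this is what makes $A_Z^{(n)}$ a legitimate common test set and is exactly the content of the ``classification'' heuristic preceding the proposition.
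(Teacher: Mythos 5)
Your argument is correct and rests on exactly the same two pillars as the paper's proof: Lemma~\ref{lma:CBPTVDLimitApproachesZero} driving $\|\L_{\Z}-\L_{\X}\|_{TV}$ to zero as $z_0\to\infty$, and \cite[Theorem 3.2]{gonzalez18} driving both survival probabilities to one, with the contradiction obtained by testing a trajectory-measurable event against the total variation bound. The one genuine difference is how the consistency hypothesis is converted into a distinguishing event: the paper first extracts a subsequence along which $\hat{\btheta}_{k_j}$ converges almost surely on the set of unbounded growth, so that it can work with the limit event $\{\hat{\btheta}=\btheta_X\}$ and the clean identities $\P_Z(\hat{\btheta}=\btheta_X,\,Z_n\to\infty)=0$ and $\P_X(\hat{\btheta}=\btheta_X,\,X_n\to\infty)=\P(X_n\to\infty)$, yielding $\|\L_{\Z}-\L_{\X}\|_{TV}\geq \P(X_n\to\infty)$ directly. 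You instead stay at finite $n$ and work with the disjoint balls $A_Z^{(n)}$, $A_X^{(n)}$ and explicit constants ($7/8$ versus $1/4$); this avoids the subsequence-extraction step entirely and uses only the literal definition \eqref{eqn:WeakConsistency}, at the mild cost of some bookkeeping to combine the conditional convergence with the survival-probability lower bound (your claim $\P_Z(A_Z^{(n)})\geq 7/8$ needs the survival probability slightly above $7/8$, e.g.\ $15/16$, but this is immediate from the same citation). Both routes are sound; yours is marginally more elementary, the paper's is marginally more economical in its final inequality.
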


Another way to express Proposition \ref{ppn:CBPNoConsistentEstimation} is through its contrapositive: if there exists a consistent estimator for $\btheta$ on the set of unbounded growth, then for any two CBPs $\Z, \X \in \Pi$ such that \eqref{eqn:PolynomialDecreasingBound} holds, we must have $\btheta_Z = \btheta_X$.

\subsection{CBPs with a known control function}\label{sec:CBPEstimationKnownPhi}

Suppose we want to estimate the parameters of a supercritical CBP $\Z$ whose control function, $\phi(\cdot)$, is known \textit{a priori} and does not need to be estimated.
In this case, the only unknown is the offspring distribution, $\xi$.
Here, we let $\Pi^{(k)}$  be a set of supercritical CBPs \textit{with a common, known control function $\phi(\cdot)$} in a given parametric family satisfying \eqref{eqn:BoundedControlFunctionMoments}. We assume that all offspring distributions $\xi$ of processes in $\Pi^{(k)}$ have finite third moments and lattice size one. As before, let $\btheta$ be a function from $\Pi^{(k)}$ to $\mathbb{R}^d$ representing the quantities we would like estimate.

When two processes in $\Pi^{(k)}$ have the same offspring mean and variance, we can show that Condition \eqref{eqn:PolynomialDecreasingBound} of Lemma \ref{lma:CBPTVDLimitApproachesZero} holds with $q = 1/2$:

\begin{lemma}\label{lma:KnownPhiOneStepTVDBound}
    If $\Z, \X \in \Pi^{(k)}$ have the same offspring mean $m$ and variance $\sigma^2$, then $|| \L_{Z_1|Z_0 = z_0} - \L_{X_1|X_0 = z_0} ||_{TV} = O(z_0^{-1/2})$.
\end{lemma}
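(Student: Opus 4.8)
\textbf{Proof proposal for Lemma \ref{lma:KnownPhiOneStepTVDBound}.}

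The plan is to compare the two one-step laws via a coupling through the common control function. Since $\phi(\cdot)$ is shared, I would first condition on the value $k := \phi(z_0)$, so that
\[
    \L_{Z_1 | Z_0 = z_0} = \sum_{k \geq 0} \P(\phi(z_0) = k)\, \L\Big( \textstyle\sum_{i=1}^{k} \xi_{1,i}^{Z} \Big),
\]
and similarly for $\X$ with $\xi^X$. By convexity of the map $\mu \mapsto \|\mu - \cdot\|_{TV}$ (or simply the triangle inequality applied to the mixture), it suffices to bound $\big\| \L(\sum_{i=1}^{k} \xi_{1,i}^Z) - \L(\sum_{i=1}^k \xi_{1,i}^X) \big\|_{TV}$ uniformly in $k$ on the relevant range and then take the $\P(\phi(z_0)=k)$-weighted average. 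The key point is that under \eqref{eqn:BoundedControlFunctionMoments} we have $\E\phi(z_0) \leq a z_0$ and $\V(\phi(z_0)) \leq b z_0$, so by Chebyshev the mixing variable $k = \phi(z_0)$ concentrates on an interval of the form $[c_1 z_0, c_2 z_0]$ up to probability $O(z_0^{-1})$ (take, say, deviations of order $z_0^{2/3}$ to get an error $o(z_0^{-1/2})$ there); on the complement we crudely bound the TVD by $1$, contributing $O(z_0^{-1})$, which is negligible against the target $O(z_0^{-1/2})$.

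Next I would establish the core estimate: for two lattice-span-one distributions $\xi^Z, \xi^X$ on $\N_0$ with equal mean $m$ and equal variance $\sigma^2$ and finite third moments, the $k$-fold convolutions satisfy $\big\| \L(S_k^Z) - \L(S_k^X) \big\|_{TV} = O(k^{-1/2})$, where $S_k^\bullet = \sum_{i=1}^k \xi_{1,i}^\bullet$. Both convolutions are being compared against the \emph{same} discretised Gaussian: by a local central limit theorem with an Edgeworth-type correction (e.g.\ the lattice local CLT for sums of i.i.d.\ integer random variables with finite third moment), the pointwise mass function of $S_k^Z$ equals the appropriate normal density evaluated at the lattice point, plus a first-order correction term of size $O(k^{-1})$ involving the third cumulant, with a uniform remainder that is $o(k^{-1})$ summably — more precisely, $\sup_j |\P(S_k^Z = j) - g_k(j)| = O(k^{-1})$ where $g_k$ is the normal density with matching mean $km$ and variance $k\sigma^2$. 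Since the mean and variance match across $Z$ and $X$, the leading Gaussian term is \emph{identical} for both; summing $|\P(S_k^Z=j) - \P(S_k^X=j)|$ over the $O(\sqrt k)$ lattice points carrying non-negligible mass gives $O(\sqrt k \cdot k^{-1}) = O(k^{-1/2})$, with the tails beyond $O(\sqrt{k \log k})$ from $km$ handled by a standard Bernstein/Chernoff bound (finite third moment, or truncation) and absorbed. I would state this as a standalone sub-lemma, citing a classical local limit theorem reference rather than re-deriving the Edgeworth expansion.

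Finally, I would assemble the pieces: with $k \asymp z_0$ on the high-probability event, $O(k^{-1/2}) = O(z_0^{-1/2})$, and combining with the $O(z_0^{-1})$ error from the concentration step yields $\|\L_{Z_1|Z_0=z_0} - \L_{X_1|X_0=z_0}\|_{TV} = O(z_0^{-1/2})$, as claimed. I expect the main obstacle to be making the local-CLT step fully rigorous with \emph{uniform} control: one needs the $O(k^{-1})$ pointwise bound to hold uniformly over the range of $k$ appearing in the mixture (not just asymptotically for fixed $\xi$), and one must be slightly careful that the constant in $O(k^{-1/2})$ depends only on $m$, $\sigma^2$, and a bound on the third moments — which are fixed across $\Pi^{(k)}$ by hypothesis — and not on finer features of $\xi^Z, \xi^X$. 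An alternative route that sidesteps the Edgeworth correction entirely is to bound the TVD by a smoothing/Fourier argument: write the difference of mass functions via the inversion formula, $\P(S_k^Z=j) - \P(S_k^X=j) = \frac{1}{2\pi}\int_{-\pi}^{\pi} e^{-itj}\big(\varphi_Z(t)^k - \varphi_X(t)^k\big)\,dt$, use $|\varphi_Z(t)^k - \varphi_X(t)^k| \leq k \max(|\varphi_Z(t)|,|\varphi_X(t)|)^{k-1} |\varphi_Z(t) - \varphi_X(t)|$ together with $|\varphi_Z(t) - \varphi_X(t)| = O(|t|^3)$ near $0$ (equal first two moments) and a spectral-gap bound $|\varphi_\bullet(t)| \leq e^{-c t^2}$ on a neighbourhood of $0$ plus $|\varphi_\bullet(t)| \leq \rho < 1$ away from $0$ (lattice span one); the resulting Gaussian integral $\int |t|^3 k e^{-ckt^2}\,dt = O(k^{-1})$ per lattice point, again giving $O(k^{-1/2})$ after summing over $O(\sqrt k)$ points. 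I would likely present this Fourier argument as the cleanest self-contained option.
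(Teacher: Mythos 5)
Your proposal follows essentially the same route as the paper: condition on the shared control-function value $\phi(z_0)$, use Chebyshev to show it concentrates on an interval of order $z_0$ (exceptional probability $O(z_0^{-1})$, where the conditional TVD is crudely bounded by one), and invoke the matching-first-two-moments convolution estimate $\|\L_{S_k^Z}-\L_{S_k^X}\|_{TV}=O(k^{-1/2})$ for lattice-span-one summands with finite third moments --- which the paper simply imports as \cite[Theorem 9]{petrov64} rather than re-deriving via an Edgeworth or Fourier argument as you sketch. One small correction: a deviation window of order $z_0^{2/3}$ gives a Chebyshev error of $O(z_0^{-1/3})$, not $o(z_0^{-1/2})$, so you must take the window to be a constant fraction of $\varepsilon(z_0)$ (as your main claim of concentration on $[c_1 z_0, c_2 z_0]$ already does, and as the paper does with $N=\lfloor\alpha\varepsilon(z_0)\rfloor$) to obtain the stated $O(z_0^{-1})$ exceptional probability.
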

By feeding Lemma \ref{lma:KnownPhiOneStepTVDBound} into Proposition \ref{ppn:CBPNoConsistentEstimation}, we obtain the following result.

\begin{theorem}\label{thm:KnownPhiNoConsistentEstimation}
 If there exist two CBPs $\Z, \X \in \Pi^{(k)}$ with the same offspring mean and variance but with different parameters $\btheta_Z \neq \btheta_X$, then no weakly consistent estimator for $\btheta$ exists on the set of unbounded growth.
\end{theorem}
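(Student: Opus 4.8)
The plan is to obtain Theorem~\ref{thm:KnownPhiNoConsistentEstimation} as an immediate consequence of the two results that precede it, namely Lemma~\ref{lma:KnownPhiOneStepTVDBound} and Proposition~\ref{ppn:CBPNoConsistentEstimation}; no new machinery should be required at this stage, since all the analytic effort has been front-loaded into those statements.

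Concretely, I would proceed as follows. Given the two processes $\Z, \X \in \Pi^{(k)}$ furnished by the hypothesis, first observe that $\Pi^{(k)} \subseteq \Pi$, so both processes lie in $\Pi$ and Proposition~\ref{ppn:CBPNoConsistentEstimation} is applicable in principle. We are told $\btheta_Z \neq \btheta_X$, so the only hypothesis of that proposition left to check is the one-step bound \eqref{eqn:PolynomialDecreasingBound}. But by assumption $\Z$ and $\X$ have the same offspring mean $m$ and the same offspring variance $\sigma^2$, so Lemma~\ref{lma:KnownPhiOneStepTVDBound} applies verbatim and yields $||\L_{Z_1|Z_0=z_0} - \L_{X_1|X_0=z_0}||_{TV} = O(z_0^{-1/2})$, i.e.\ \eqref{eqn:PolynomialDecreasingBound} holds with $q = 1/2 > 0$. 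Feeding this into Proposition~\ref{ppn:CBPNoConsistentEstimation} gives exactly that no weakly consistent estimator for $\btheta$ exists on the set of unbounded growth, which is the assertion.

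The genuine difficulty is therefore not in the theorem itself but upstream, in Lemma~\ref{lma:KnownPhiOneStepTVDBound} (which we are entitled to assume here). The point there is that $(Z_1 \mid Z_0 = z_0)$ is the random sum $\sum_{i=1}^{\phi(z_0)} \xi_i$, and since the control function is \emph{common} to both processes one can condition on $\phi(z_0) = k$ and compare $\sum_{i=1}^{k}\xi_i$ against the analogous sum built from the offspring law of $\X$: these two sums have matching means $km$ and matching variances $k\sigma^2$, have finite third moments, and are supported on a lattice of span one, so a local-central-limit-theorem / Berry--Esseen estimate bounds their total variation distance by a multiple of $k^{-1/2}$. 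The delicate step is then to integrate this conditional bound against the law of $\phi(z_0)$ and, using the growth control \eqref{eqn:BoundedControlFunctionMoments} together with a lower-tail estimate ensuring $\phi(z_0)$ is rarely much smaller than its mean $\varepsilon(z_0) = \Theta(z_0)$, conclude that the averaged bound is still $O(z_0^{-1/2})$. Similarly, Lemma~\ref{lma:CBPTVDLimitApproachesZero}, which upgrades a polynomially decaying one-step discrepancy to a vanishing discrepancy between the full path laws, is where a coupling/telescoping argument over generations is needed. Both of these lie strictly before the present theorem and need not be reproved.
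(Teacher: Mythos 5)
Your proposal is correct and matches the paper exactly: the paper derives Theorem~\ref{thm:KnownPhiNoConsistentEstimation} precisely by feeding the $O(z_0^{-1/2})$ one-step bound of Lemma~\ref{lma:KnownPhiOneStepTVDBound} (available since the two processes share $m$ and $\sigma^2$) into Proposition~\ref{ppn:CBPNoConsistentEstimation}, with no further argument. Your sketch of the upstream lemmas also accurately reflects how the paper proves them, though that is not needed here.
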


We can note the resemblance of Theorem \ref{thm:KnownPhiNoConsistentEstimation} to \cite[Theorem~2]{lockhart82} on the non-existence of consistent estimators for BGWPs (see also Theorem \ref{thm:GWPNoConsistentEstimates}).
Similarly, Theorem \ref{thm:KnownPhiNoConsistentEstimation} does not tell us whether we \textit{can} estimate $m$ and $\sigma^2$---only that we cannot consistently estimate anything other than functions of $m$ and $\sigma^2$. \\

We now show that it is possible to construct consistent estimators for $m$ and $\sigma^2$ for processes in $\Pi^{(k)}$.
There are two cases of interest: first, when $m$ is known and $\sigma^2$ unknown, and second, when  both $m$ and $\sigma^2$ are unknown.
Let $I^{\varepsilon}_n := \{ k \in \{1, \dots, n\} : \varepsilon(Z_{k-1}) > 0 \}$.
In the first case, we propose 
\[
    \bar{\sigma}_n^2 := \frac{1}{| I^{\varepsilon}_n |} \sum_{k \in I^{\varepsilon}_n} \frac{(Z_k - m \cdot \varepsilon(Z_{k-1}))^2 - m^2 \cdot \nu^2(Z_{k-1})}{\varepsilon(Z_{k-1})}
\]
as an estimator for $\sigma^2$, while noting that if the distribution of $\phi(z)$ is known, then so too are  $\varepsilon(z) := \E\phi(z)$ and  $\nu^2(z) := \V(\phi(z))$.
In the second case, we propose
\begin{align*}
    \hat{m}_n &:= \frac{1}{| I^{\varepsilon}_n |} \sum_{k \in I^{\varepsilon}_n} \frac{Z_k}{\varepsilon(Z_{k-1})},
    &\quad
    \hat{\sigma}_n^2 &:= \frac{1}{| I^{\varepsilon}_n |} \sum_{k \in I^{\varepsilon}_n} \frac{(Z_k - \tilde{m}_n \cdot \varepsilon(Z_{k-1}))^2 - \tilde{m}_n^2 \cdot \nu^2(Z_{k-1})}{\varepsilon(Z_{k-1})}
\end{align*}
as estimators for $m$ and $\sigma^2$ respectively, where $\tilde{m}_n := Z_n / \varepsilon(Z_{n-1})$. \\

Under mild assumptions, we can show that $\hat{m}_n$, $\bar{\sigma}_n^2$, and $\hat{\sigma}_n^2$ are all consistent estimators. In the next theorem and throughout the rest of the paper, we denote the third central moment of the control function by $\iota(z) := \E(\phi(z) - \E\phi(z))^3$.

\begin{theorem}\label{thm:KnownPhiConsistentEstimators}
    Let $\Z\in \Pi^{(k)}$. Then, on the set $\{Z_n \to \infty \}$,
    \begin{enumerate}
        \item[(i)] $\hat{m}_n$ is a strongly consistent estimator for $m$.
    \end{enumerate}
    If we further assume that $\sup_{z \geq 1} \left\{ \frac{|\iota(z)|}{z} \right\} \leq c$ where and $\sup_{z \geq 1} \Big\{ \frac{\E(\phi(z) - \varepsilon(z))^4}{z^2} \Big\} \leq d$ for positive constants $c$ and $d$, and that $\E(\xi - m)^4$ is finite, then
    \begin{enumerate}
        \item[(ii)] $\bar{\sigma}^2_n$ is a strongly consistent estimator for $\sigma^2$, and
        \item[(iii)] $\hat{\sigma}^2_n$ is a weakly consistent estimator for $\sigma^2$.
    \end{enumerate}
\end{theorem}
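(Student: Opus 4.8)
The plan is to write each estimator as a normalised sum of martingale differences for the natural filtration $\F_k := \sigma(Z_0,\dots,Z_k)$ and to apply a strong law for martingales. The calculation common to all three parts is a conditional moment computation for the one-step transition: conditioning first on $\phi(Z_{k-1})$ and using that, given $\phi(Z_{k-1})$, $Z_k$ is a sum of that many i.i.d.\ copies of $\xi$, the tower rule and the law of total variance give
\[
    \E(Z_k \mid \F_{k-1}) = m\,\varepsilon(Z_{k-1}),
    \qquad
    \V(Z_k \mid \F_{k-1}) = \sigma^2\,\varepsilon(Z_{k-1}) + m^2\,\nu^2(Z_{k-1}).
\]
Writing $Z_k - m\varepsilon(Z_{k-1}) = \sum_{i=1}^{\phi(Z_{k-1})}(\xi_{k,i}-m) + m(\phi(Z_{k-1}) - \varepsilon(Z_{k-1}))$ and expanding the fourth power, the same conditioning argument expresses $\E((Z_k - m\varepsilon(Z_{k-1}))^4 \mid \F_{k-1})$ as a polynomial in $m$, $\sigma^2$, $\E(\xi-m)^3$, $\E(\xi-m)^4$, $\varepsilon(Z_{k-1})$, $\nu^2(Z_{k-1})$, $\iota(Z_{k-1})$ and $\E(\phi(Z_{k-1})-\varepsilon(Z_{k-1}))^4$; under the extra hypotheses of parts (ii)--(iii) together with \eqref{eqn:BoundedControlFunctionMoments}, every such term is $O(Z_{k-1}^2)$.

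On $\{Z_n\to\infty\}$, supercriticality \eqref{eqn:supercriticalCBP} gives a $\delta>0$ with $\varepsilon(z)\ge(1+\delta)z/m$ for all large $z$, so almost surely on this set $\varepsilon(Z_{k-1})>0$ eventually (hence $|I^\varepsilon_n|\to\infty$ and $|I^\varepsilon_n|/n\to1$), $\varepsilon(Z_{k-1})\to\infty$, and the ratios $Z_{k-1}^2/\varepsilon(Z_{k-1})^2$ and $\nu^2(Z_{k-1})/\varepsilon(Z_{k-1})$ stay bounded. For (i), the terms $d_k := (Z_k/\varepsilon(Z_{k-1}) - m)\,\1\{\varepsilon(Z_{k-1})>0\}$ form a martingale difference sequence with $\E(d_k^2\mid\F_{k-1}) = (\sigma^2\varepsilon(Z_{k-1}) + m^2\nu^2(Z_{k-1}))/\varepsilon(Z_{k-1})^2\to0$ on $\{Z_n\to\infty\}$, so there $\sum_k\E(d_k^2\mid\F_{k-1})/k^2<\infty$; a martingale strong law (after the standard localisation of the argument to $\{Z_n\to\infty\}$) and Kronecker's lemma give $|I^\varepsilon_n|^{-1}\sum_{k\le n}d_k\to0$, i.e.\ $\hat m_n\to m$ a.s. For (ii), subtracting $m^2\nu^2(Z_{k-1})$ in the numerator is exactly what makes the conditional mean of each summand of $\bar\sigma^2_n$ equal to $\sigma^2$, so after centring we again obtain a martingale difference $e_k$ with $\E(e_k^2\mid\F_{k-1})\le\E((Z_k-m\varepsilon(Z_{k-1}))^4\mid\F_{k-1})/\varepsilon(Z_{k-1})^2 = O(Z_{k-1}^2/\varepsilon(Z_{k-1})^2) = O(1)$, and the same strong law yields $\bar\sigma^2_n\to\sigma^2$ a.s.

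For (iii) I would expand $(Z_k - \tilde m_n\varepsilon(Z_{k-1}))^2 - \tilde m_n^2\nu^2(Z_{k-1})$ about the true mean, so that $\hat\sigma^2_n = \bar\sigma^2_n + R_n$, where $R_n$ is a sum of terms of the shape $\delta_n\,|I^\varepsilon_n|^{-1}\sum_k(Z_k - m\varepsilon(Z_{k-1}))$, $\delta_n^2\,|I^\varepsilon_n|^{-1}\sum_k\varepsilon(Z_{k-1})$, and $O(1)$-weighted multiples of $\delta_n$ and $\delta_n^2$, with $\delta_n := \tilde m_n - m$. The point of taking $\tilde m_n = Z_n/\varepsilon(Z_{n-1})$ rather than the average $\hat m_n$ is that $\delta_n$ is a single last increment, so $\E(\delta_n^2\mid\F_{n-1})$ is of order $\varepsilon(Z_{n-1})^{-1}$; hence $\delta_n\to0$ in probability, and $\delta_n^2$ decays at precisely the (geometric) rate at which $\varepsilon(Z_{n-1})$ and the exploding sums $\sum_{k\le n}\varepsilon(Z_{k-1})$ and $\sum_{k\le n}\V(Z_k\mid\F_{k-1})$ grow. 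The decisive estimate is that, on $\{Z_n\to\infty\}$, $\sum_{k\le n}\varepsilon(Z_{k-1}) = O_P(\varepsilon(Z_{n-1}))$ and $\sum_{k\le n}\V(Z_k\mid\F_{k-1}) = O_P(\varepsilon(Z_{n-1}))$, which follow from the geometric growth of $Z_n$ on the set of unbounded growth (obtained from \eqref{eqn:supercriticalCBP} and the variance bound in \eqref{eqn:BoundedControlFunctionMoments} via Chebyshev, which forces $\phi(Z_{k-1})\ge(1+\delta/2)Z_{k-1}/m$ eventually and hence $Z_k/Z_{k-1}$ bounded below by a constant $>1$ for all large $k$). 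Feeding these in makes each term of $R_n$ equal to $O_P(1/|I^\varepsilon_n|)\to0$, so $\hat\sigma^2_n = \bar\sigma^2_n + R_n\to\sigma^2$ in probability.

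I expect the main obstacle to be this last step: quantifying the growth of $Z_n$ on $\{Z_n\to\infty\}$ sharply enough that the estimation error $\delta_n^2$, which is of order $\varepsilon(Z_{n-1})^{-1}$, dominates the exploding sums $\sum_k\varepsilon(Z_{k-1})$. This is a genuine strengthening of $Z_n\to\infty$ that is not immediate from supercriticality together with the moment bounds, and is presumably also why only \emph{weak} consistency is asserted for $\hat\sigma^2_n$. A secondary but pervasive technicality is the localisation required throughout to turn bounds valid only on $\{Z_n\to\infty\}$ (such as $\varepsilon(Z_{k-1})$ being bounded away from $0$) into a clean application of the martingale strong law.
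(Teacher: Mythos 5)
Your proposal follows essentially the same route as the paper's proof: parts (i) and (ii) are the same martingale-difference strong law with the same conditional variance and fourth-moment computations, and for part (iii) you use the same decomposition $\hat\sigma^2_n = \bar\sigma^2_n + R_n$ and correctly isolate the decisive estimate, namely that $\sum_{k\le n}\varepsilon(Z_{k-1})/\varepsilon(Z_{n-1})$ is bounded in probability on $\{Z_n\to\infty\}$ via eventual geometric growth (the content of the paper's Lemma \ref{lma:NormedSumBoundedInProbability}). The only cosmetic difference is that the paper controls the cross term by an $n^{1/4}$/$n^{3/4}$ three-factor split so as to invoke the almost-sure martingale law with normaliser $n^{3/4}\bigl(\sum_k\varepsilon(Z_{k-1})\bigr)^{1/2}$, where you instead appeal directly to the martingale being of the order of the square root of its bracket in probability; both close the argument.
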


\subsection{CBPs with an unknown control function}\label{sec:CBPEstimationUnknownControl}

In most cases, we would \textit{not} expect that the distribution of the control function is known beforehand; hence, it needs to be estimated alongside the offspring distribution.
Recall that a control function $\{\phi(z)\}_{z\in\N_0}$ is specified by a countably infinite set of random variables, one for each population size $z$.
To make the problem of estimating the distribution of $\phi(\cdot)$ tractable, we require some regularity conditions. First, to ensure that the (appropriately scaled) distribution of $\phi(z)$ converges to the normal distribution as $z \to \infty$, we assume that the control function is \textit{linearly-divisible}:
\begin{definition}\label{def:LinearDivisibility}
    The control function $\{\phi(z)\}_{z\in\N_0}$ is \textit{linearly-divisible} if there exists a function $l: \N_0 \to \N_0$ such that $l(z) = \Theta(z)$ (i.e.\ $0< \liminf_{z\to\infty} l(z)/z$ and $\limsup_{z\to\infty} l(z)/z <\infty$)
    and, for each $z \in \N_0$, there exist a set of i.i.d.\ random variables $\{\chi_i^{(z)}\}_{1\leq i \leq l(z)}$ such that $\phi(z) \stackrel{d}{=} \sum_{i=1}^{l(z)} \chi_i^{(z)}$.
\end{definition}

Recall that \eqref{eqn:BoundedControlFunctionMoments} provides a sufficient condition for a supercritical CBP to have a positive probability of unbounded growth. Here, we need to strengthen this condition to include a bound on the growth rate of the third central moment of the control function.
In particular, we  assume that there exist constants $a, b, c \geq 0$ such that
\begin{equation}\label{eqn:BoundedControlFunctionThirdMoment}
    \sup_{z \geq 1} \left\{ \frac{\varepsilon(z)}{z} \right\} \leq a, \quad
    \sup_{z \geq 1} \left\{ \frac{\nu^2(z)}{z} \right\} \leq b, \quad \text{and} \quad
    \sup_{z \geq 1} \left\{ \frac{|\iota(z)|}{z} \right\} \leq c.
\end{equation}

We let $\Pi^{(u)}$ be a set of supercritical CBPs in a given parametric family with a linearly-divisible control function satisfying \eqref{eqn:BoundedControlFunctionThirdMoment}.  We assume that all offspring distributions $\xi$ of processes in $\Pi^{(u)}$ have finite third moment, and lattice size one. As before, let $\btheta$ be a function from $\Pi^{(u)}$ to $\mathbb{R}^d$ representing the quantities we would like estimate. We then have the following analogue of Lemma \ref{lma:KnownPhiOneStepTVDBound}:

\begin{lemma}\label{lma:LinearlyDivisibleOneStepTVDBound}
    If $\Z,\X \in \Pi^{(u)}$ are such that
    \begin{equation}\label{eqn:CBPMeanSDSquareRootBound}
        |\E(Z_1|Z_0=z) - \E(X_1|X_0=z)| = O(z^{r/2})
        \quad \text{and} \quad
        |\V(Z_1|Z_0=z) - \V(X_1|X_0=z)| = O(z^r)
    \end{equation}
    for some $r < 1$, then there exists $q > 0$ such that $||\L_{Z_1|Z_0=z_0} - \L_{X_1|X_0=z_0}||_{TV} = O(z_0^{-q})$.
\end{lemma}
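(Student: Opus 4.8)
The plan is to bound the total variation distance between the one-step laws $\L_{Z_1|Z_0=z_0}$ and $\L_{X_1|X_0=z_0}$ by comparing each of them to a suitable Gaussian (or discretized Gaussian) with matching mean and variance, exploiting that both one-step sizes are sums of a \emph{large} (order $z_0$) number of i.i.d.\ summands. First I would unpack the structure of $(Z_1\mid Z_0=z_0)$: by linear-divisibility, $\phi(z_0)\stackrel{d}{=}\sum_{j=1}^{l(z_0)}\chi_j^{(z_0)}$ with $l(z_0)=\Theta(z_0)$, and $(Z_1\mid Z_0=z_0)=\sum_{i=1}^{\phi(z_0)}\xi_{i}$, so that $Z_1$ is a random sum of i.i.d.\ offspring. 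One can either (a) condition on $\phi(z_0)$ and treat $Z_1$ as a sum of $\phi(z_0)=\Theta_{\mathbb P}(z_0)$ i.i.d.\ copies of $\xi$, then also use the i.i.d.\ decomposition of $\phi(z_0)$ itself, or (b) directly write $Z_1$ as a sum over the $l(z_0)$ blocks $\sum_{i}\xi$-over-$\chi_j^{(z_0)}$-copies, each block being i.i.d.\ across $j$ with finite third moment (guaranteed by the finite third moment of $\xi$ and the bound \eqref{eqn:BoundedControlFunctionThirdMoment} on $\iota(z)$, $\nu^2(z)$). Route (b) is cleanest: it exhibits $(Z_1\mid Z_0=z_0)$ as a sum of $l(z_0)=\Theta(z_0)$ i.i.d.\ integer-valued random variables with uniformly (in $z_0$) controlled first three moments and, crucially, \emph{lattice span one} (inherited from $\xi$ having lattice size one).

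Next I would invoke a quantitative local-CLT / Edgeworth-type bound in total variation: for a sum $S_N$ of $N$ i.i.d.\ lattice-span-one integer random variables with mean $\mu$, variance $s^2>0$ and finite third absolute moment $\rho$, the law of $S_N$ is within $O(N^{-1/2})$ in total variation of the ``discretized Gaussian'' $\mathcal N_{\mathbb Z}(N\mu, N s^2)$ (the probability measure on $\mathbb Z$ assigning mass $\int_{k-1/2}^{k+1/2}$ of the $N(N\mu,Ns^2)$ density to $k$), with the implied constant depending only on $s^2$ and $\rho/s^3$. Applying this to both $Z_1$ and $X_1$ with $N=l(z_0)=\Theta(z_0)$ gives
\[
  \|\L_{Z_1|Z_0=z_0}-\mathcal N_{\mathbb Z}(\E(Z_1|z_0),\V(Z_1|z_0))\|_{TV}=O(z_0^{-1/2}),
\]
and likewise for $X_1$. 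It then remains to bound the TV distance between the two discretized Gaussians $\mathcal N_{\mathbb Z}(\mu_Z,v_Z)$ and $\mathcal N_{\mathbb Z}(\mu_X,v_X)$ where, by hypothesis \eqref{eqn:CBPMeanSDSquareRootBound}, $|\mu_Z-\mu_X|=O(z_0^{r/2})$, $|v_Z-v_X|=O(z_0^{r})$, and both variances are $\Theta(z_0)$ (the latter from $\V(Z_1|Z_0=z)=\sigma^2\varepsilon(z)+m^2\nu^2(z)=\Theta(z)$ under \eqref{eqn:BoundedControlFunctionThirdMoment} together with the supercriticality lower bound $\varepsilon(z)/z\ge$ const, and $\sigma^2>0$). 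A standard computation (e.g.\ via Pinsker's inequality and the explicit KL divergence between two Gaussians, then comparing the discretized versions) shows
\[
  \|\mathcal N_{\mathbb Z}(\mu_Z,v_Z)-\mathcal N_{\mathbb Z}(\mu_X,v_X)\|_{TV}
  = O\!\left(\frac{|\mu_Z-\mu_X|}{\sqrt{z_0}}\right)+O\!\left(\frac{|v_Z-v_X|}{z_0}\right)
  = O(z_0^{(r-1)/2})+O(z_0^{r-1}).
\]
Since $r<1$, both exponents are negative, and combining via the triangle inequality with the two local-CLT bounds yields $\|\L_{Z_1|Z_0=z_0}-\L_{X_1|X_0=z_0}\|_{TV}=O(z_0^{-q})$ with $q=\min\{1/2,\,(1-r)/2\}>0$.

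The main obstacle I anticipate is establishing the quantitative TV local-CLT with an implied constant that is \emph{uniform in $z_0$}. The summands in the block decomposition of $Z_1$ change with $z_0$ (the distribution of $\chi_j^{(z_0)}$ depends on $z_0$), so I need the assumptions \eqref{eqn:BoundedControlFunctionThirdMoment} to furnish uniform control of the per-summand variance and third moment — in particular, a \emph{uniform lower bound} on the per-summand variance (coming from $\sigma^2>0$ and $\varepsilon(z)/z$ bounded below, since $\V(Z_1\mid z_0)/l(z_0)$ is bounded below) and a uniform upper bound on the normalized third moment — and I must also verify the lattice span of each block summand is exactly one rather than a proper multiple (this follows from $\xi$ having lattice size one, since a single offspring already generates span one, but it needs to be argued carefully at the level of the block sum). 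A secondary, more routine technical point is the passage between a continuous Gaussian and its lattice discretization and checking that the TV comparison of two discretized Gaussians with $\Theta(z_0)$ variances really does scale as claimed; this is elementary but should be done explicitly since the rate $q$ depends on it.
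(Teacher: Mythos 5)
Your proposal is correct and follows essentially the same route as the paper's proof: the block decomposition $(Z_1\mid Z_0=z_0)=\sum_{j=1}^{l(z_0)}\tilde\xi_j(z_0)$ with $\tilde\xi(z_0):=\sum_{i=1}^{\tilde\phi(z_0)}\xi_i$, a quantitative TV comparison of each one-step law to a discretised normal at rate $O(z_0^{-1/2})$ (the paper derives this from a Stein-method bound of Chen together with a Mattner--Roos smoothing estimate, which is exactly where your flagged span-one/uniformity issue is handled via a uniform lower bound on $\P(\tilde\phi(z_0)\ge 1)$), and then a TV bound between the two discretised normals of order $O(z_0^{(r-1)/2})$ (the paper uses Devroye et al.'s Gaussian TV bound rather than Pinsker/KL, with the same scaling). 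The technical obstacles you anticipate are precisely the ones the paper addresses, so no gap remains.
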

By feeding Lemma \ref{lma:LinearlyDivisibleOneStepTVDBound} into Proposition \ref{ppn:CBPNoConsistentEstimation}, we obtain the analogue of Theorem \ref{thm:KnownPhiNoConsistentEstimation}:

\begin{theorem}\label{thm:LinearlyDivisibleNoConsistentEstimation}
    If there exist two CBPs $\Z, \X \in \Pi^{(u)}$ that satisfy \eqref{eqn:CBPMeanSDSquareRootBound} but with different parameters $\btheta_Z \neq \btheta_X$, then no weakly consistent estimator for $\btheta$ exists on the set of unbounded growth.
\end{theorem}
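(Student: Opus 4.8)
The plan is to obtain the theorem as an immediate consequence of Lemma~\ref{lma:LinearlyDivisibleOneStepTVDBound} and Proposition~\ref{ppn:CBPNoConsistentEstimation}; no new probabilistic input is needed here, since the substantive estimate has already been isolated in the lemma. The first thing I would do is the bookkeeping required to invoke Proposition~\ref{ppn:CBPNoConsistentEstimation}: every $\Z \in \Pi^{(u)}$ is by definition a supercritical CBP whose control function satisfies \eqref{eqn:BoundedControlFunctionThirdMoment}, and the first two inequalities there are exactly \eqref{eqn:BoundedControlFunctionMoments}; hence $\Pi^{(u)}$ is an admissible instance of the generic class $\Pi$ appearing in Proposition~\ref{ppn:CBPNoConsistentEstimation}, and $\btheta$ restricted to $\Pi^{(u)}$ is an admissible parameter function.

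Next, suppose $\Z, \X \in \Pi^{(u)}$ satisfy the moment-closeness hypothesis \eqref{eqn:CBPMeanSDSquareRootBound} for some $r < 1$ while $\btheta_Z \neq \btheta_X$. Applying Lemma~\ref{lma:LinearlyDivisibleOneStepTVDBound} to the pair $(\Z,\X)$ produces a constant $q > 0$ with $||\L_{Z_1|Z_0=z_0} - \L_{X_1|X_0=z_0}||_{TV} = O(z_0^{-q})$, i.e.\ the polynomially-decreasing one-step total variation bound \eqref{eqn:PolynomialDecreasingBound} holds for $(\Z,\X)$. Since $\btheta_Z \neq \btheta_X$ by assumption, Proposition~\ref{ppn:CBPNoConsistentEstimation} applies verbatim and yields that no weakly consistent estimator for $\btheta$ exists on the set of unbounded growth. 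This closes the argument.

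The only genuinely hard step is the one delegated to Lemma~\ref{lma:LinearlyDivisibleOneStepTVDBound}, and it is worth recalling where its difficulty lies, even though for the present theorem it may be treated as a black box. I expect its proof to rest on a quantitative central limit theorem for $Z_1 \mid Z_0 = z$: linear divisibility lets one write $\phi(z) \stackrel{d}{=} \sum_{j=1}^{l(z)} \chi_j^{(z)}$ with $l(z) = \Theta(z)$, so that $Z_1 \mid Z_0 = z$ is a sum of $l(z) = \Theta(z)$ i.i.d.\ compound terms $\eta_j^{(z)} := \sum_{i=1}^{\chi_j^{(z)}} \xi_{i}$ whose first three moments are controlled by \eqref{eqn:BoundedControlFunctionThirdMoment} together with $\E|\xi|^3 < \infty$. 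A Berry--Esseen estimate then approximates $\L_{Z_1|Z_0=z}$ and $\L_{X_1|X_0=z}$ by Gaussians carrying their own first two moments, with error $O(z^{-1/2})$; since $\V(Z_1 \mid Z_0 = z) = \Theta(z)$, comparing the two Gaussians costs $O(z^{r/2}/\sqrt{z}) + O(z^{r}/z) = O(z^{-(1-r)/2})$, and the triangle inequality delivers the bound with $q = \min\{(1-r)/2,\,1/2\}$. Making the moment control uniform in $z$ and handling the random (rather than fixed) number of $\xi$-summands is the real technical obstacle there; but in the present theorem all of this is supplied by the lemma, so nothing further is required.
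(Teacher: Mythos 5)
Your proof is correct and follows exactly the paper's route: the paper likewise obtains the theorem by feeding Lemma~\ref{lma:LinearlyDivisibleOneStepTVDBound} into Proposition~\ref{ppn:CBPNoConsistentEstimation}, with the observation that \eqref{eqn:BoundedControlFunctionThirdMoment} subsumes \eqref{eqn:BoundedControlFunctionMoments} so that $\Pi^{(u)}$ qualifies as an instance of $\Pi$. Your sketch of where the difficulty in the lemma lies also matches the paper's actual argument (a total-variation CLT via discretised normal approximation), so nothing further is needed.
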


To understand why the conditions in \eqref{eqn:CBPMeanSDSquareRootBound} are sufficient for the non-existence result in Theorem \ref{thm:LinearlyDivisibleNoConsistentEstimation}, first observe that under the assumption of linear divisibility, for large $z$, we have
\begin{equation}\label{eqn:CBPTransitionProbApproximations}
\begin{aligned}
    (X_1 | X_0 = z) &\stackrel{d}{\approx} \E(X_1 | X_0=z) + \sqrt{\V(X_1 | X_0 = z)} \cdot \text{N}(0,1) \\ 
    (Z_1 | Z_0 = z) &\stackrel{d}{\approx} \E(Z_1 | Z_0=z) + \sqrt{\V(Z_1 | Z_0 = z)} \cdot \text{N}(0,1).
\end{aligned}
\end{equation}
Then, rearranging the second line gives
\[
    (Z_1 | Z_0 = z)
    \stackrel{d}{\approx}
    \E(X_1 | X_0 = z) + \sqrt{\V(X_1 | X_0 = z)} \cdot N\left(
        \frac{\E(Z_1 | Z_0=z) - \E(X_1 | X_0=z)}{\sqrt{\V(X_1 | X_0=z)}} \,, \, \frac{\V(Z_1 | Z_0=z)}{\V(X_1 | X_0=z)}
    \right).
\]
Now, note that if
\begin{equation}\label{eqn:CBPTransitionProbApproximationLimit}
    \frac{\E(Z_1 | Z_0=z) - \E(X_1 | X_0=z)}{\sqrt{\V(X_1 | X_0=z)}} \to 0
    \quad\text{and}\quad
    \frac{\V(Z_1 | Z_0=z)}{\V(X_1 | X_0=z)} \to 1 \quad \text{as } z \to \infty,
\end{equation}
then the two conditional distributions in \eqref{eqn:CBPTransitionProbApproximations} become increasingly similar as $z \to \infty$, which
makes it impossible to identify which of the two processes $\X$ and $\Z$ an infinite trajectory of population sizes comes from with arbitrarily high accuracy.
Condition \eqref{eqn:CBPMeanSDSquareRootBound} (together with \eqref{eqn:BoundedControlFunctionThirdMoment}) actually implies that \eqref{eqn:CBPTransitionProbApproximationLimit} holds.

\begin{exmp}\label{exmp:PoissonControlEstimation}
    Let $\Pi^{(u)}$ be the family of supercritical CBPs with control functions of the form $\phi(z) \sim \poi(z + az^q)$ for a fixed value $q > 0$, and whose offspring distributions have common fixed mean $m > 1$ and variance $\sigma^2 > 0$.
    For a process $\Z \in \Pi^{(u)}$, we consider the estimation of $\theta = a$ for different values of $q$. In this case, $\E(Z_1 | Z_0=z) = m\,(z + az^q)$ and $\V(Z_1 | Z_0=z) = (m^2+\sigma^2)(z + az^q)$.
    \begin{enumerate}
        \item[(i)] When $q < 1/2$, \eqref{eqn:CBPMeanSDSquareRootBound} holds for any $r \in (2q, 1)$ if, for example, we let $\Z$ be the process with $a=0$ and $\X$ be the process with $a=1$. By Theorem \ref{thm:LinearlyDivisibleNoConsistentEstimation}, this implies that $\theta=a$ cannot be consistently estimated when $q < 1/2$.

        \item[(ii)] When $q > 1/2$, \eqref{eqn:CBPMeanSDSquareRootBound} does not hold for any two processes with different values of $\theta=a$. In this case, Theorem \ref{thm:LinearlyDivisibleNoConsistentEstimation} does not rule out the existence of a consistent estimator for $\theta=a$. In fact, we can show that 
        \[
            \bar{a}_n := \frac{Z_n - m Z_{n-1}}{m Z_{n-1}^q}
        \]
        is a consistent estimator for $\theta=a$ on the event $\{ Z_n \to \infty \}$. Indeed, 
        \[
            \E \bar{a}_n
            = \E\bigg(\frac{\E(Z_n | Z_{n-1}) - m Z_{n-1}}{m Z_{n-1}^q}\bigg)
            = a,
        \]
        and
        \[
            \V(\bar{a}_n)
            = \E\bigg( \frac{\V(Z_n | Z_{n-1})}{m^2 Z_{n-1}^{2q}} \bigg)
            = \frac{\sigma^2 + m^2}{m^2} \, \Big( \E\big( Z_{n-1}^{1-2q} \big) + a \cdot \E\big( Z_{n-1}^{-q} \big)
            \Big)
            \to 0 \quad \textrm{as $n\to\infty$},
        \]
        since $Z^{1-2q}_{n-1} \to 0$ and $Z^{-q}_{n-1} \to 0$ as $n \to \infty$ on  $\{ Z_n \to \infty \}$.
        Chebyshev's inequality then implies $\bar{a}_n \stackrel{P}{\to} a$ as $n \to \infty$,  that is, $\bar{a}_n$ is a consistent estimator of $a$ on  $\{ Z_n \to \infty \}$.
    \end{enumerate}
\end{exmp}

Let us now consider a more specific class of supercritical processes, $\Pi^{(u*)}$, which is a subclass of $\Pi^{(u)}$ where the (unknown) control functions have linear mean and variance, that is, $\varepsilon(z) = \alpha z$ and $\nu^2(z) = \beta z$ for some $\alpha, \beta > 0$. This implies that $\E(Z_1 | Z_0 = z) = m \alpha z$ and $\V(Z_1 | Z_0 = z) = \sigma^2 \alpha z + m^2 \beta z$.
Suppose we would like to estimate $\btheta^{(1)}:=(m,\, \sigma^2,\, \alpha,\, \beta)$. If $\Z ,\X \in \Pi^{(u*)}$ are such that $\btheta^{(1)}_Z \neq \btheta^{(1)}_X$,
\begin{equation}\label{eqn:LinearAssumptionSquareRootBound}
    m_Z \alpha_Z = m_X \alpha_X,
    \quad\text{and}\quad
    \sigma^2_Z \alpha_Z + m^2_Z \beta_Z = \sigma^2_X \alpha_X + m^2_X \beta_X,
\end{equation} 
then \eqref{eqn:CBPMeanSDSquareRootBound} is trivially satisfied, since $|\E(Z_1|Z_0=z) - \E(X_1|X_0=z)| = 0$ and $|\V(Z_1|Z_0=z) - \V(X_1|X_0=z)| = 0$.
For a concrete example, take $\Z$ with $\phi_Z(z) \sim \bin(2,\, 1/2)$ and $\xi_Z \sim \bin(8z,\, 1/8)$, and $\X$ with $\phi_X(z) \sim \poi(2z)$ and $\xi_Z \sim \geo(1/2)$.
Therefore, by Theorem \ref{thm:LinearlyDivisibleNoConsistentEstimation}, $\btheta^{(1)}$ cannot be estimated consistently. 

On the other hand, if we would like to estimate $\btheta^{(2)}=(g,\, h):=(m\alpha,\, \sigma^2\alpha+m^2\beta)$ (where $g$ is the \textit{mean growth rate} of the process), then \eqref{eqn:CBPMeanSDSquareRootBound} does not hold for any $\Z ,\X \in \Pi^{(u*)}$ with $\btheta^{(2)}_Z \neq \btheta^{(2)}_X$.
With $I_n := \{ k \in \{1, \dots, n\} : Z_{k-1} > 0 \}$, we let
\[
    \hat{g}_n := \frac{1}{| I_n |} \sum_{k\in I_n} \frac{Z_k}{Z_{k-1}},
\]
\[
    \bar{h}_n := \frac{1}{| I_n |} \sum_{k\in I_n} \frac{(Z_k - m \alpha \cdot Z_{k-1})^2}{Z_{k-1}},
    \quad\text{ and}\quad
    \hat{h}_n := \frac{1}{| I_n |} \sum_{k\in I_n} \frac{(Z_k - \tilde{g}_n \cdot Z_{k-1})^2}{Z_{k-1}},
\]
where $\tilde{g}_n = Z_n / Z_{n-1}$, and we note that $\bar{h}_n$ assumes the value of $m\alpha$ to be known.
We now show that $\hat{g}_n,$ $ \bar{h}_n$, and $ \hat{h}_n$  are consistent for their respective parameters.

\begin{theorem}\label{thm:LinearMeanVarPhiConsistentEstimators}
    Let $\Z \in \Pi^{(u*)}$. Then, on $\{ Z_n \to \infty \}$,
    \begin{enumerate}
        \item[(i)] $\hat{g}_n$ is a strongly consistent estimator for $g=m\alpha$.
    \end{enumerate}
    If we further assume \eqref{eqn:BoundedControlFunctionThirdMoment}, that there exists a positive constant $d$ such that $\sup_{z \geq 1} \Big\{ \frac{\E(\phi(z) - \varepsilon(z))^4}{z^2} \Big\} \leq d$, and that $\E(\xi - m)^4$ is finite, then
    \begin{enumerate}
        \item[(ii)] $\bar{h}_n$ is a strongly consistent estimator for $h=\sigma^2 \alpha + m^2 \beta$.
        \item[(iii)] $\hat{h}_n$ is a weakly consistent estimator for $h=\sigma^2 \alpha + m^2 \beta$.
    \end{enumerate}
\end{theorem}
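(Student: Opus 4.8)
The plan is to exploit the martingale structure that linearity of the control function imposes. Write $\F_n:=\sigma(Z_0,\dots,Z_n)$. Since $\Z\in\Pi^{(u*)}$ has $\varepsilon(z)=\alpha z$ and $\nu^2(z)=\beta z$, on $\{Z_{k-1}>0\}$ we have
\[
    \E(Z_k\mid\F_{k-1})=m\alpha\,Z_{k-1}=g\,Z_{k-1},\qquad
    \V(Z_k\mid\F_{k-1})=(\sigma^2\alpha+m^2\beta)\,Z_{k-1}=h\,Z_{k-1}.
\]
For (i) and (ii) I would recognise $\hat g_n-g$ and $\bar h_n-h$ as normalised partial sums of martingale difference sequences and invoke the martingale strong law of large numbers: if $\{\Delta_k\}$ is an $\{\F_k\}$-martingale difference sequence with $\sum_{k\geq1}k^{-2}\E(\Delta_k^2\mid\F_{k-1})<\infty$ a.s., then $n^{-1}\sum_{k=1}^n\Delta_k\to0$ a.s.\ (the martingale $\sum_{k=1}^N\Delta_k/k$ converges a.s.\ by $L^2$-type martingale convergence, then Kronecker's lemma). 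On $\{Z_n\to\infty\}$ one has $Z_{k-1}>0$ for all large $k$, so $|I_n|/n\to1$ and $\hat g_n-g=\tfrac{n}{|I_n|}\cdot\tfrac1n\sum_{k=1}^n\Delta_k$ (and similarly for $\bar h_n$), reducing matters to $n^{-1}\sum_{k=1}^n\Delta_k$. For (i), take $\Delta_k:=(Z_k/Z_{k-1}-g)\1\{Z_{k-1}>0\}$: then $\E(\Delta_k\mid\F_{k-1})=0$ and $\E(\Delta_k^2\mid\F_{k-1})=(h/Z_{k-1})\1\{Z_{k-1}>0\}\leq h$, so the hypothesis is immediate and $\hat g_n\to g$ a.s.\ on $\{Z_n\to\infty\}$.

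For (ii), take $\Delta_k:=\big((Z_k-gZ_{k-1})^2/Z_{k-1}-h\big)\1\{Z_{k-1}>0\}$. Because $gZ_{k-1}=\E(Z_k\mid\F_{k-1})$ and $\V(Z_k\mid\F_{k-1})=hZ_{k-1}$, we get $\E(\Delta_k\mid\F_{k-1})=0$ and, on $\{Z_{k-1}>0\}$,
\[
    \E(\Delta_k^2\mid\F_{k-1})=\frac{\mu_4(Z_{k-1})}{Z_{k-1}^2}-h^2,
\]
where $\mu_4(z)$ denotes the fourth central moment of the compound sum $\sum_{i=1}^{\phi(z)}\xi_i$. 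The key estimate is $\mu_4(z)=O(z^2)$: writing $\mu_4(z)=\kappa_4(z)+3\kappa_2(z)^2$ with $\kappa_2(z)=hz$, and using the classical formula for the cumulants of a compound sum, $\kappa_4(z)$ is a fixed linear combination of $\varepsilon(z)\E(\xi-m)^4$, $\nu^2(z)\big(4m\E(\xi-m)^3+3\sigma^4\big)$, $\iota(z)\,m^2\sigma^2$ and $\big(\E(\phi(z)-\varepsilon(z))^4-3\nu^2(z)^2\big)m^4$; under \eqref{eqn:BoundedControlFunctionThirdMoment}, the bound $\sup_{z\geq1}\E(\phi(z)-\varepsilon(z))^4/z^2\leq d$ and $\E(\xi-m)^4<\infty$, every term is $O(z^2)$. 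Hence $\E(\Delta_k^2\mid\F_{k-1})=\kappa_4(Z_{k-1})/Z_{k-1}^2+2h^2$ is bounded uniformly in $k$, the strong law applies, and $\bar h_n\to h$ a.s.\ on $\{Z_n\to\infty\}$.

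For (iii), with $\tilde g_n=Z_n/Z_{n-1}$ I would expand the square around $g$ to get, on $\{Z_{n-1}>0\}$,
\[
    \hat h_n-\bar h_n=(\tilde g_n-g)^2 B_n-2(\tilde g_n-g)A_n,\qquad
    A_n:=\tfrac1{|I_n|}\!\sum_{k\in I_n}(Z_k-gZ_{k-1}),\quad B_n:=\tfrac1{|I_n|}\!\sum_{k\in I_n}Z_{k-1},
\]
so by (ii) it suffices to show $\hat h_n-\bar h_n\stackrel{P}{\to}0$ on $\{Z_n\to\infty\}$. Since $\V(\tilde g_n\mid\F_{n-1})=h/Z_{n-1}\to0$ there, a conditional Chebyshev inequality (with a routine truncation on $\{Z_{n-1}\geq K\}$) gives $\tilde g_n-g\stackrel{P}{\to}0$. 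For the first term I factor $(\tilde g_n-g)^2B_n=\big((Z_n-gZ_{n-1})^2/Z_{n-1}\big)\cdot\big(B_n/Z_{n-1}\big)$: the first factor has conditional mean $h$, hence is $O_P(1)$, while $B_n/Z_{n-1}=|I_n|^{-1}\sum_{k\in I_n}Z_{k-1}/Z_{n-1}\to0$ because on $\{Z_n\to\infty\}$ the process grows geometrically — the non-negative martingale $Z_n/g^n$ converges to a limit $W$ which, under the linear-control and finite-third-moment hypotheses, is a.s.\ strictly positive on $\{Z_n\to\infty\}$ (a Kesten--Stigum-type fact) — so $\sum_{k\in I_n}Z_{k-1}/Z_{n-1}$ is bounded a.s.\ and dividing by $|I_n|\to\infty$ sends it to $0$. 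The cross term $(\tilde g_n-g)A_n$ is handled the same way after splitting off the $k=n$ summand of $A_n$: the $\F_{n-1}$-measurable part, upon conditioning, contributes conditional variance a multiple of $\big(\sum_{k<n}(Z_k-gZ_{k-1})\big)^2/(Z_{n-1}|I_n|^2)$, controlled using the geometric growth together with a martingale bound on $\sum_{k<n}(Z_k-gZ_{k-1})$ (predictable variation $h\sum_{k<n}Z_{k-1}=O(Z_{n-1})$, conditional fourth moments $O(Z_{k-1}^2)$ by the computation in (ii)), while the leftover $k=n$ piece is a multiple of $(Z_n-gZ_{n-1})^2/(Z_{n-1}|I_n|)\stackrel{P}{\to}0$. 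Together these give $\hat h_n-\bar h_n\stackrel{P}{\to}0$, hence $\hat h_n\stackrel{P}{\to}h$ on $\{Z_n\to\infty\}$.

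\noindent\textbf{Expected main obstacle.} Parts (i) and (ii) are, apart from the fourth-cumulant bookkeeping in (ii), a routine martingale strong-law argument. The real difficulty will be (iii): the plug-in $\tilde g_n=Z_n/Z_{n-1}$ is built from the last and largest observations, which are highly correlated with the partial sums $A_n,B_n$ appearing in $\hat h_n$, and forcing the competing rates to cancel ($\tilde g_n-g$ of order $Z_{n-1}^{-1/2}$, $A_n$ of order $Z_{n-1}^{1/2}/n$, $B_n$ of order $Z_{n-1}/n$) requires knowing that $Z_n$ grows at a true geometric rate on $\{Z_n\to\infty\}$, i.e.\ a Kesten--Stigum-type statement for the CBP. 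Moreover the residual factor $(Z_n-gZ_{n-1})^2/Z_{n-1}$ is only $O_P(1)$, not $o(1)$ almost surely, which is precisely why (iii) yields weak rather than strong consistency.
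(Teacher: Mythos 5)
Your parts (i) and (ii) follow essentially the same route as the paper: the same martingale differences $Z_k/Z_{k-1}-g$ and $(Z_k-gZ_{k-1})^2/Z_{k-1}-h$, the same martingale strong law with the $\sum k^{-2}\E(\Delta_k^2\mid\F_{k-1})<\infty$ criterion, and the same fourth-cumulant bookkeeping to bound $\E(\Delta_k^2\mid\F_{k-1})$ uniformly; your cumulant decomposition of $\mu_4(z)$ is equivalent to the paper's expanded expression and the conclusion $\kappa_4(z)=O(z^2)$ is exactly what is needed. For (iii) you also use the same algebraic decomposition $\hat h_n=\bar h_n-2(\tilde g_n-g)A_n+(\tilde g_n-g)^2B_n$.

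The genuine gap is in the ingredient you use to control $B_n/Z_{n-1}=\tfrac{1}{|I_n|}\sum_k Z_{k-1}/Z_{n-1}$ (and the analogous quantity in the cross term). You derive boundedness of $\sum_k Z_{k-1}/Z_{n-1}$ from the assertion that $Z_n/g^n\to W$ with $W>0$ a.s.\ on $\{Z_n\to\infty\}$, calling this ``a Kesten--Stigum-type fact.'' That statement is not available off the shelf here and is precisely the kind of hypothesis the paper is at pains to avoid (the introduction explicitly contrasts its results with earlier work that \emph{assumes} $(\tau m)^{-n}Z_n$ converges to a non-degenerate limit). While $L^2$-boundedness of the martingale $Z_n/g^n$ is easy under the stated moment conditions, the identification of $\{W=0\}$ with the complement of $\{Z_n\to\infty\}$ for a general CBP in $\Pi^{(u*)}$ requires a separate argument that you do not supply. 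The paper instead proves the needed bound directly (its Lemma \ref{lma:NormedSumBoundedInProbability}): it shows that, conditional on the population exceeding a large level $M$, the event $\{Z_k\geq sZ_{k-1}\ \forall k\}$ has probability tending to $1$ as $M\to\infty$ via a product of Chebyshev bounds, which yields $\P\big(\sum_k Z_{k-1}/Z_{n-1}\leq (1-s^{-1})^{-1}\big)\to1$ on the set of unbounded growth without any reference to the limit $W$. Your cross-term treatment (splitting off the $k=n$ summand and conditioning) is also considerably sketchier than the paper's clean factorisation into three factors with the $n^{1/4}/n^{3/4}$ rate split, the middle one handled by Chebyshev and the last by the martingale SLLN with random normaliser $J_n=n^{3/4}\sqrt{\sum_k Z_{k-1}}$; to make your version rigorous you would in any case need the same boundedness-in-probability lemma. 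So the proposal is salvageable, but only after either proving the Kesten--Stigum-type statement or replacing it with a direct argument of the paper's kind.
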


We note that if $m$ and $\sigma^2$ are known, then consistent estimators for $\alpha$ and $\beta$ are given by $\hat{g}_n / m$ and $(m \, \hat{h}_n - \sigma^2 \, \hat{g}_n) / m^3$, respectively.
If $m$ and $\sigma^2$ are unknown, then estimating them consistently from the data (in addition to $\alpha$ and $\beta$) requires a more detailed observation scheme. We explore this in the next section.

\subsection{CBPs with observed progenitor numbers}\label{sec:CBPEstimationObservedProgenitors}

Here we assume that both the population size \textit{and the number of progenitors} are observed at each generation, that is, we observe the outcomes of $Z_0,\, \phi(Z_0),\, Z_1,\, \phi(Z_1),\, \dots,\, \phi(Z_{n-1}),\, Z_n$.
We consider processes belonging to a set $\Pi^{(p)}$, satisfying the same conditions as $\Pi^{(u)}$ in Section \ref{sec:CBPEstimationUnknownControl}, plus the additional assumptions that the processes in $\Pi^{(p)}$ satisfy $\liminf_{z\to\infty} \nu^2(z)/z > 0$, and that they have linearly divisible control functions, $\phi(z) \stackrel{d}{=} \sum_{i=1}^{l(z)} \chi_i^{(z)}$, such that there exists a constant $\eta > 0$ and a sequence $\{x_z\}_{z\in\N_1}$ with
\begin{equation}\label{eqn:UniformlyLatticeSizeOne}
    \P(\chi^{(z)} = x_z) \wedge \P(\chi^{(z)} = x_z + 1) \geq \eta
\end{equation}
for all $z \in \N_1$. Equation \eqref{eqn:UniformlyLatticeSizeOne} is a technical condition that is satisfied for many natural models.

\begin{lemma}\label{lma:KnownProgenitorsOneStepTVDBound}
    If $\Z,\X\in \Pi^{(p)}$ are such that
    \begin{equation}\label{eqn:KnownProgenitorsOneStepTVDBoundConditions}
        m_Z = m_X, \quad
        \sigma^2_Z = \sigma^2_X, \quad
        |\varepsilon_Z(z) - \varepsilon_X(z)| = O(z^{r/2}),
        \quad \textrm{and} \quad
        |\nu^2_Z(z) - \nu^2_X(z)| = O(z^r)
    \end{equation}
    for some $r < 1$, then there exists $q > 0$ such that
    \begin{equation}\label{eqn:KnownProgenitorsOneStepTVDBound}
        || \L_{(\phi_Z(Z_0),\, Z_1)|Z_0=z_0} - \L_{(\phi_X(X_0),\, X_1)|X_0=z_0}||_{TV} = O(z_0^{-q}).
    \end{equation}
\end{lemma}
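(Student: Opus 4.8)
The plan is to decompose the joint one-step transition law using the Markov structure: conditionally on $Z_0 = z_0$, first $\phi(Z_0) = \phi(z_0)$ is generated, and then $Z_1 = \sum_{i=1}^{\phi(z_0)} \xi_i$ is generated given the value of $\phi(z_0)$. Writing $\varphi_Z, \varphi_X$ for the laws of $\phi_Z(z_0), \phi_X(z_0)$ and $\mu^{(k)}$ for the law of a $k$-fold convolution of $\xi$ (the same $\xi$ for both processes, since $m_Z = m_X$ and $\sigma^2_Z = \sigma^2_X$ — wait, equal mean and variance does not mean equal $\xi$; I should keep $\mu_Z^{(k)}$ and $\mu_X^{(k)}$ separate), the joint law factorises as a mixture. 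The first step is therefore to prove a triangle-type bound
\[
    || \L_{(\phi_Z(z_0), Z_1)} - \L_{(\phi_X(z_0), X_1)} ||_{TV}
    \leq || \varphi_Z - \varphi_X ||_{TV}
    + \sum_{k} \varphi_X(k)\, || \mu_Z^{(k)} - \mu_X^{(k)} ||_{TV},
\]
obtained by inserting the intermediate law $\sum_k \varphi_X(k)\,\delta_k \otimes \mu_Z^{(k)}$ and bounding each difference; the first term controls the discrepancy in the progenitor marginal, and the second controls the discrepancy in the offspring convolution averaged over the (common reference) progenitor distribution.

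For the first term, I would use the linear divisibility of $\phi_Z(z_0)$ and $\phi_X(z_0)$ together with condition \eqref{eqn:UniformlyLatticeSizeOne}: both $\phi_Z(z_0)$ and $\phi_X(z_0)$ are sums of $l(z_0) = \Theta(z_0)$ i.i.d.\ summands, the summands have a common lattice span one uniformly in $z$ (by \eqref{eqn:UniformlyLatticeSizeOne}), and the hypotheses $|\varepsilon_Z(z_0) - \varepsilon_X(z_0)| = O(z_0^{r/2})$, $|\nu^2_Z(z_0) - \nu^2_X(z_0)| = O(z_0^r)$ with $r<1$ guarantee that, after the natural $\sqrt{z_0}$ scaling, the two standardised means converge and the variance ratio converges to $1$. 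A local-CLT / Edgeworth argument — essentially the same engine that underlies Lemma \ref{lma:LinearlyDivisibleOneStepTVDBound}, applied now directly to the control-function law rather than to the next-step law — then gives $|| \varphi_Z - \varphi_X ||_{TV} = O(z_0^{-q_1})$ for some $q_1 > 0$; the additional lower bound $\liminf_{z\to\infty} \nu^2(z)/z > 0$ ensures the variance does not degenerate so that the scaling is genuinely of order $\sqrt{z_0}$.

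For the second term I would argue that, under $\varphi_X$, the progenitor count $k$ concentrates around $\varepsilon_X(z_0) = \Theta(z_0)$ with fluctuations of order $\sqrt{z_0}$ (Chebyshev, using $\nu^2_X(z_0) \le b z_0$). On the bulk event $\{|k - \varepsilon_X(z_0)| \le z_0^{1/2 + \delta}\}$ every relevant $k$ is of order $z_0$, and for such $k$ the convolutions $\mu_Z^{(k)}$ and $\mu_X^{(k)}$ are each, by the local CLT for i.i.d.\ lattice-span-one variables with matching mean $m$ and matching variance $\sigma^2$, close in total variation to the same discretised Gaussian; hence $|| \mu_Z^{(k)} - \mu_X^{(k)} ||_{TV} = O(k^{-1/2}) = O(z_0^{-1/2})$ — this is exactly the content of Lemma \ref{lma:KnownPhiOneStepTVDBound} with $k$ in the role of the population size. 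The contribution of the complementary tail event is $O(z_0^{-p})$ for any $p$ by the concentration bound (or at least $O(z_0^{-1})$, which suffices). Combining, the second term is $O(z_0^{-1/2})$, and adding the two bounds yields \eqref{eqn:KnownProgenitorsOneStepTVDBound} with $q = \min(q_1, 1/2) > 0$.

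The main obstacle I anticipate is the first term: getting a polynomial-rate total-variation bound between the two control-function laws $\varphi_Z$ and $\varphi_X$. A plain Berry–Esseen bound compares each $\varphi$ to a Gaussian in Kolmogorov distance, not in total variation, and Kolmogorov-to-TV conversion is not free for lattice laws; the clean route is a uniform local limit theorem for the i.i.d.\ summands $\chi^{(z)}$, which is precisely why the span-one-uniformly-in-$z$ hypothesis \eqref{eqn:UniformlyLatticeSizeOne} and the bounded third central moment in \eqref{eqn:BoundedControlFunctionThirdMoment} are imposed. Making the local CLT genuinely uniform in $z$ (so that the error terms are $O(z^{-q_1})$ with a single $q_1$ and implied constant) — rather than merely holding for each fixed sequence of summand distributions — is the delicate point, and I would expect the bulk of the work, likely a lemma stated and proved separately, to go there; the rest is routine convolution/Chebyshev bookkeeping of the kind already used for Lemmas \ref{lma:KnownPhiOneStepTVDBound} and \ref{lma:LinearlyDivisibleOneStepTVDBound}.
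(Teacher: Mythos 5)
Your proposal is correct and follows essentially the same route as the paper: condition on the progenitor count, bound the offspring part by the $O(u^{-1/2})$ total-variation comparison of $u$-fold convolutions with matching mean and variance (Petrov) together with a Chebyshev concentration bound on $\phi(z_0)$, and bound the progenitor marginal discrepancy by a uniform local-CLT-in-TV argument for the linearly divisible control functions compared through discretised normals. The "separate lemma" you anticipate for the control-function comparison is exactly the paper's Lemma \ref{lma:ControlFunctionsOneStepTVDBound}, which resolves the uniformity-in-$z$ issue you flag via the Stein's-method bound of Lemma \ref{lma:DiscretisedNormalTVDBound} with constants controlled uniformly by \eqref{eqn:BoundedControlFunctionThirdMoment} and \eqref{eqn:UniformlyLatticeSizeOne}.
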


With our new observation scheme, Lemma \ref{lma:CBPTVDLimitApproachesZero} and Proposition \ref{ppn:CBPNoConsistentEstimation} cannot be directly applied. However, close equivalents of these results exist, and together with Lemma \ref{lma:KnownProgenitorsOneStepTVDBound} lead to the following result:

\begin{theorem}\label{thm:KnownProgenitorsNoConsistentEstimation}
    Suppose that both the population sizes and the progenitor numbers are observed. If there exist two CBPs $\Z, \X \in \Pi^{(p)}$ that satisfy \eqref{eqn:KnownProgenitorsOneStepTVDBoundConditions} but with different parameters $\btheta_Z \neq \btheta_X$, then no weakly consistent estimator for $\btheta$ exists on the set of unbounded growth.
\end{theorem}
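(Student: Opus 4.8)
The plan is to mirror the argument that produced Proposition~\ref{ppn:CBPNoConsistentEstimation} from Lemma~\ref{lma:CBPTVDLimitApproachesZero}, but now at the level of the \emph{joint} process $\{(\phi_n(Z_n), Z_{n+1})\}$ rather than $\{Z_n\}$ alone, since this is the trajectory that is actually observed under scenario \textbf{S3}. First I would fix two processes $\Z,\X\in\Pi^{(p)}$ satisfying \eqref{eqn:KnownProgenitorsOneStepTVDBoundConditions} with $\btheta_Z\neq\btheta_X$. Write $W^Z_n := (\phi_Z(Z_{n-1}), Z_n)$ for the $n$-th observed pair, and let $\L_{\Z}^{\mathrm{obs}}$, $\L_{\X}^{\mathrm{obs}}$ denote the laws of the full observed sequences $(z_0, W^Z_1, W^Z_2, \dots)$ and $(z_0, W^X_1, W^X_2, \dots)$. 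By Lemma~\ref{lma:KnownProgenitorsOneStepTVDBound}, the one-step observed transition kernels satisfy $\|\L_{W^Z_1\mid Z_0=z_0} - \L_{W^X_1\mid X_0=z_0}\|_{TV} = O(z_0^{-q})$ for some $q>0$.

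Next I would establish the analogue of Lemma~\ref{lma:CBPTVDLimitApproachesZero} for this observation scheme: namely that $\lim_{z_0\to\infty}\|\L_{\Z}^{\mathrm{obs}} - \L_{\X}^{\mathrm{obs}}\|_{TV} = 0$. The key structural point is that the observed sequence is still Markov in a suitable sense --- conditioned on $Z_{n-1}$, the pair $W^Z_n$ has a law depending only on $Z_{n-1}$, and $Z_n$ is recovered as the second coordinate of $W^Z_n$ --- so the same coupling-plus-telescoping argument used for Lemma~\ref{lma:CBPTVDLimitApproachesZero} applies almost verbatim. One couples the two observed chains step by step: at each step, conditionally on having coupled $Z_{n-1}=X_{n-1}=z$, the pairs $W^Z_n$ and $W^X_n$ can be coupled to agree with probability $1 - O(z^{-q})$, in which case in particular $Z_n = X_n$ and the coupling continues; on $\{Z_n\to\infty\}$ the population sizes grow, so $\sum_n Z_n^{-q} < \infty$ along coupled trajectories and the probability of ever decoupling tends to $0$ as $z_0\to\infty$. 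This requires importing from the proof of Lemma~\ref{lma:CBPTVDLimitApproachesZero} the fact that under \eqref{eqn:BoundedControlFunctionThirdMoment} the conditioned process grows geometrically fast enough for the summability to hold with high probability; I would cite that proof rather than redo it. I expect this transfer step to be the main obstacle: one must check that conditioning on the richer observation does not spoil the Markov/coupling structure, and handle the bookkeeping of the initial-population-size dependence carefully, but no genuinely new idea beyond what Lemma~\ref{lma:CBPTVDLimitApproachesZero} already contains should be needed.

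Finally I would invoke the total-variation-versus-classification argument from Section~\ref{sec:Framework}, now applied to observed trajectories. If a weakly consistent estimator $\hat\btheta_n = \hat\btheta_n(z_0, W_1, \dots, W_n)$ existed on the set of unbounded growth, then for $z_0$ large enough that $\P(Z_n\to\infty\mid Z_0=z_0)$ is close to $1$ (guaranteed by \eqref{eqn:BoundedControlFunctionThirdMoment} and \cite[Theorem~3.2]{gonzalez18}), it would let us distinguish $\Z$ from $\X$ from an infinite observed trajectory with arbitrarily high probability, forcing $\|\L_{\Z}^{\mathrm{obs}} - \L_{\X}^{\mathrm{obs}}\|_{TV}$ arbitrarily close to $1$ --- contradicting the conclusion of the previous paragraph, which gives $\|\L_{\Z}^{\mathrm{obs}} - \L_{\X}^{\mathrm{obs}}\|_{TV} < 1$ for all sufficiently large $z_0$. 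Hence no such estimator exists. The only delicate point in this last step is that consistency is asserted on the event $\{Z_n\to\infty\}$, not unconditionally, so the classification argument must be phrased conditionally on that event and one uses that its probability can be made close to $1$ by taking $z_0$ large; this is exactly how Proposition~\ref{ppn:CBPNoConsistentEstimation} is deduced, so I would follow that template.
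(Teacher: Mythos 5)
Your proposal is correct and follows essentially the same route as the paper: apply Lemma~\ref{lma:KnownProgenitorsOneStepTVDBound} to the observed pairs $(\phi(Z_{n-1}),Z_n)$, transfer Lemma~\ref{lma:CBPTVDLimitApproachesZero} and Proposition~\ref{ppn:CBPNoConsistentEstimation} to the enriched observation scheme by replacing $Z_j$ with $(\phi(Z_{j-1}),Z_j)$ in each total variation distance, and conclude via the classification/contradiction argument using $\P(Z_n\to\infty\mid Z_0=z_0)\to 1$. The paper dispatches the transfer step in one sentence as ``direct,'' whereas you correctly flag it as the point requiring verification that the Markov and recursive-bound structure is preserved; your sketch of that verification matches what the paper's argument actually uses.
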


Note that, even under this new observation scheme, Theorem \ref{thm:KnownProgenitorsNoConsistentEstimation} implies that the parameter $\theta=a$ in Example \ref{exmp:PoissonControlEstimation} can still not be estimated consistently when $q<1/2$.

Consider again the class of supercritical CBPs $\Z\in\Pi^{(u*)}$, with control functions $\phi(\cdot)$ satisfying $\varepsilon(z) = \alpha z$ and $\nu^2(z) = \beta z$ for $\alpha, \beta > 0$. Recall from the previous section that if only the population sizes are observed at each generation, then  $\btheta^{(1)}:=(m,\, \sigma^2,\, \alpha,\, \beta)$ cannot be estimated consistently.
Under the current observation scheme and with $I^{\phi}_n := \{ k \in \{1, \dots, n\} : \phi(Z_{k-1}) > 0 \}$ and $I^-_n := \{ k \in \{0, \dots, n - 1 \} : Z_k > 0 \}$, we consider the estimators
\begin{align*}
    \hat{m}_n &:= \frac{1}{| I^{\phi}_n |} \sum_{k \in I^{\phi}_n} \frac{Z_k}{\phi(Z_{k-1})},
    &\quad
    \hat{\alpha}_n &:= \frac{1}{| I^-_n |} \sum_{k \in I^-_n} \frac{\phi(Z_k)}{Z_k}, \\
    \bar{\sigma}_n^2 &:= \frac{1}{| I^{\phi}_n |} \sum_{k \in I^{\phi}_n} \frac{(Z_k - m \cdot \phi(Z_{k-1}))^2}{\phi(Z_{k-1})},
    &\quad
    \bar{\beta}_n &:= \frac{1}{| I^-_n |} \sum_{k \in I^-_n} \frac{(\phi(Z_k) - \alpha \cdot Z_k)^2}{Z_k}, \\
    \hat{\sigma}_n^2 &:= \frac{1}{| I^{\phi}_n |} \sum_{k \in I^{\phi}_n} \frac{(Z_k - \tilde{m}_n \cdot \phi(Z_{k-1}))^2}{\phi(Z_{k-1})},
    &\quad
    \hat{\beta}_n &:= \frac{1}{| I^-_n |} \sum_{k \in I^-_n} \frac{(\phi(Z_k) - \tilde{\alpha}_{n-1} \cdot Z_k)^2}{Z_k},
\end{align*}
where $\tilde{m}_n := Z_n / \phi(Z_{n-1})$ and $\tilde{\alpha}_n := \phi(Z_n) / Z_n$.
Note that $\bar{\sigma}_n^2$ and $\bar{\beta}_n$ require knowledge of $m$ and $\alpha$, respectively, while $\hat{\sigma}_n^2$ and $\hat{\beta}_n$ do not.
The next proposition shows that the above estimators are consistent for their respective parameters.

\begin{theorem}\label{thm:ObservedProgenitorsConsistentEstimators}
    Suppose that both the population sizes  and the progenitor numbers  are observed. If $\Z\in\Pi^{(u*)}$, then on $\{ Z_n \to \infty \}$,
    \begin{enumerate}
        \item[(i)] $\hat{m}_n$ is a strongly consistent estimator for $m$,
        \item[(ii)] $\hat{\alpha}_n$ is a strongly consistent estimator for $\alpha$.
    \end{enumerate}
    If we further assume \eqref{eqn:BoundedControlFunctionThirdMoment}, that there exists a positive constant $d$ such that $\sup_{z \geq 1} \Big\{ \frac{\E(\phi(z) - \varepsilon(z))^4}{z^2} \Big\} \leq d$, and that $\E(\xi - m)^4$ is finite, then
    \begin{enumerate}
        \item[(iii)] $\bar{\sigma}^2_n$ is a strongly consistent estimator for $\sigma^2$, 
        \item[(iv)] $\hat{\sigma}^2_n$ is a weakly consistent estimator for $\sigma^2$,
        \item[(v)] $\bar{\beta}_n$ is a strongly consistent estimator for $\beta$, and
        \item[(vi)] $\hat{\beta}_n$ is a weakly consistent estimator for $\beta$.
    \end{enumerate}
\end{theorem}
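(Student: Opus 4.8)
The plan is to treat each of the six estimators as a normalised sum over the index set in question, show that the summands have the right conditional mean, and then invoke a strong (resp.\ weak) law of large numbers for martingale-difference-type sequences on the event of unbounded growth, exactly as in the proofs of Theorems \ref{thm:KnownPhiConsistentEstimators} and \ref{thm:LinearMeanVarPhiConsistentEstimators}. The key observation is that the new observation scheme decouples the two layers of randomness: conditionally on $Z_{k-1}$, the progenitor count $\phi(Z_{k-1})$ has mean $\alpha Z_{k-1}$ and variance $\beta Z_{k-1}$, and conditionally on $\phi(Z_{k-1})$, the next population size $Z_k=\sum_{i=1}^{\phi(Z_{k-1})}\xi_{k,i}$ is a sum of $\phi(Z_{k-1})$ i.i.d.\ copies of $\xi$. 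Thus for the $\xi$-parameters ($\hat m_n,\bar\sigma_n^2,\hat\sigma_n^2$) one conditions on the filtration generated by the $Z$'s and the $\phi$'s and treats $\phi(Z_{k-1})$ as the (observed) sample size, while for the $\phi$-parameters ($\hat\alpha_n,\bar\beta_n,\hat\beta_n$) one conditions on the $Z$'s and treats $Z_k$ as the sample size.

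First I would handle (i) and (ii). For $\hat m_n$, write $Z_k/\phi(Z_{k-1}) = m + \phi(Z_{k-1})^{-1}\sum_{i=1}^{\phi(Z_{k-1})}(\xi_{k,i}-m)$; the correction terms form a martingale difference sequence with respect to the natural filtration, with conditional variance $\sigma^2/\phi(Z_{k-1})$. On $\{Z_n\to\infty\}$ we have $\phi(Z_{k-1})\to\infty$ (using $\liminf_z \varepsilon(z)/z>0$ together with a Borel--Cantelli / conditional-Chebyshev argument to rule out $\phi(Z_{k-1})=0$ infinitely often, which also shows $|I^\phi_n|\to\infty$), so $\sum_k \phi(Z_{k-1})^{-2}\cdot\sigma^2/\phi(Z_{k-1})$-type bounds let Kolmogorov's SLLN for martingale differences (or the Chow/Hall--Heyde criterion $\sum \V(\cdot)/k^2<\infty$ after reindexing by $I^\phi_n$) give $\hat m_n\to m$ a.s. The estimator $\hat\alpha_n$ is symmetric: $\phi(Z_k)/Z_k = \alpha + Z_k^{-1}(\phi(Z_k)-\alpha Z_k)$, conditional variance $\beta/Z_k$, and $Z_k\to\infty$ on the growth set, so the same martingale SLLN applies over $I^-_n$.

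Next I would do the variance estimators. For $\bar\sigma_n^2$ (known $m$), the summand $(Z_k-m\phi(Z_{k-1}))^2/\phi(Z_{k-1})$ has conditional mean exactly $\sigma^2$ given $\F_{k-1}$ augmented by $\phi(Z_{k-1})$, because $Z_k-m\phi(Z_{k-1})=\sum_{i=1}^{\phi(Z_{k-1})}(\xi_{k,i}-m)$ and its squared expectation is $\phi(Z_{k-1})\sigma^2$; the fourth-moment hypotheses on $\xi$ and on $\phi(z)$ control the conditional variance of the summand (it is $O(1)$ plus $O(1/\phi(Z_{k-1}))$ terms), again yielding a martingale-difference SLLN. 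For $\bar\beta_n$ (known $\alpha$) the argument is the mirror image using $\V(\phi(z))=\beta z$, the bound $\sup_z \E(\phi(z)-\varepsilon(z))^4/z^2\le d$, and $Z_k\to\infty$. The ``hatted'' versions $\hat\sigma_n^2$ and $\hat\beta_n$ replace the known $m$ (resp.\ $\alpha$) by the one-point plug-in $\tilde m_n=Z_n/\phi(Z_{n-1})$ (resp.\ $\tilde\alpha_{n-1}=\phi(Z_{n-1})/Z_{n-1}$); one shows $\tilde m_n\to m$ \emph{in probability} on $\{Z_n\to\infty\}$ (mean $m$, variance $\E(\sigma^2/\phi(Z_{n-1}))\to0$ by Chebyshev, since $\phi(Z_{n-1})\to\infty$), and similarly $\tilde\alpha_{n-1}\xrightarrow{P}\alpha$, then expands $(Z_k-\tilde m_n\phi(Z_{k-1}))^2 = (Z_k-m\phi(Z_{k-1}))^2 - 2(\tilde m_n-m)\phi(Z_{k-1})(Z_k-m\phi(Z_{k-1})) + (\tilde m_n-m)^2\phi(Z_{k-1})^2$; after dividing by $\phi(Z_{k-1})$ and averaging, the first term gives $\bar\sigma_n^2\to\sigma^2$, the cross term is $(\tilde m_n-m)$ times an average of conditional-mean-zero terms (whose normalised sum is $o(1)$ in probability), and the last term is $(\tilde m_n-m)^2$ times $|I^\phi_n|^{-1}\sum\phi(Z_{k-1})\to$ something finite-in-probability, hence $o(1)$; this forces only weak consistency, matching the statement. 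The symmetric expansion handles $\hat\beta_n$.

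The main obstacle I anticipate is purely bookkeeping rather than conceptual: establishing the strong law over the \emph{random, data-dependent} index sets $I^\phi_n$ and $I^-_n$ on the event $\{Z_n\to\infty\}$, which is not a measurable event in any single $\F_k$. One must show $|I^\phi_n|\to\infty$ and $|I^-_n|\to\infty$ a.s.\ on this event (so that the averages are well-defined and the LLN has enough terms), verify the Hall--Heyde/Chow martingale-difference convergence criterion with summands indexed by the original time $k$ but variances decaying like $1/\phi(Z_{k-1})$ or $1/Z_k$ — which is delicate because $Z_k$ may grow erratically and could in principle return to small values — and handle the truncation to $\{Z_n\to\infty\}$ by the standard device of conditioning on $\{Z_N=j\}$ for large $N$ and using the supercriticality bound \eqref{eqn:BoundedControlFunctionMoments} to make $\P(Z_n\to\infty\mid Z_N=j)$ close to $1$. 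I expect this machinery is already set up in the proofs of Theorems \ref{thm:KnownPhiConsistentEstimators} and \ref{thm:LinearMeanVarPhiConsistentEstimators} (the estimators here are literally those estimators with $\varepsilon(Z_{k-1})$ replaced by the \emph{observed} $\phi(Z_{k-1})$, plus the new $\phi$-side estimators), so the proof would proceed largely by invoking and lightly adapting that earlier argument.
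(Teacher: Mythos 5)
Your proposal follows essentially the same route as the paper: condition on the two interleaved filtrations so that $\phi(Z_{k-1})$ plays the role of an observed sample size for the $\xi$-parameters and $Z_k$ for the $\phi$-parameters, apply the martingale-difference SLLN (Theorem \ref{thm:SLLNforMDS}/Corollary \ref{crly:SLLNforMDS}) to the barred estimators, and reduce the hatted estimators to the barred ones via the quadratic expansion in $(\tilde m_n-m)$, resp.\ $(\tilde\alpha_{n-1}-\alpha)$; the paper likewise dispatches the random index sets by a simplifying positivity assumption and a reference back to the argument of Theorem \ref{thm:KnownPhiConsistentEstimators}(i)(b). One small correction: in your treatment of the quadratic remainder you assert that $|I^\phi_n|^{-1}\sum_k\phi(Z_{k-1})$ converges to something finite in probability, which is false on $\{Z_n\to\infty\}$ since the process grows geometrically; the paper instead groups the factors as $\frac{(Z_n-m\phi(Z_{n-1}))^2}{n\,\phi(Z_{n-1})}\cdot\sum_{k}\frac{\phi(Z_{k-1})}{\phi(Z_{n-1})}$, where the second factor is bounded in probability by Lemma \ref{lma:PhiNormedSumBoundedInProbability} and the first is $O_P(1/n)$, and an analogous three-factor splitting with weights $n^{1/4}$ and $n^{3/4}$ handles the cross term.
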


\section{Concluding remarks}

 As mentioned in Section \ref{sec:Introduction}, a common rule of thumb in population ecology is that demographic and environmental stochasticity should not be simultaneously estimated from a single trajectory of population size counts. This rule is supported by Theorem \ref{thm:LinearlyDivisibleNoConsistentEstimation} and its application to the processes in $\Pi^{(u*)}$ with parameter $\btheta^{(1)}:=(m,\, \sigma^2,\, \alpha,\, \beta)$. Our results suggest three different ways to address this limitation:
 (i) Use an independent data source, or expert knowledge, to estimate environmental stochasticity (i.e., the control function) first, then estimate demographic stochasticity (i.e., $m$ and $\sigma^2$) while treating the distribution of the control function as known (supported by Theorem \ref{thm:KnownPhiConsistentEstimators} in setting \textbf{S1});
(ii) Rely on expert knowledge on the species to estimate the demographic parameters, and use population size counts to estimate the control function parameters only (for example $\alpha$ and $\beta$, as supported by Theorem \ref{thm:LinearMeanVarPhiConsistentEstimators} and commentary below);
(iii) Collect additional data beyond a single trajectory of population size counts, as in setting \textbf{S3} (Theorem \ref{thm:ObservedProgenitorsConsistentEstimators}).

We propose two future research directions. First, consider the variance decomposition:
\[
    \V(Z_1 | Z_0=z_0) = \E(\phi(z_0)) \cdot \sigma^2 + \V(\phi(z_0)) \cdot m^2,
\]
where the first term represents demographic stochasticity (as it does not depend on  $\V(\phi(z_0))$), and the second term represents environmental stochasticity (as it does not depend on  $\sigma^2$).
Recall that the assumptions in \eqref{eqn:BoundedControlFunctionThirdMoment} effectively imply that the mean and variance of the control function grow linearly in the population size $z$. This means that the demographic and environmental stochasticity both grow linearly.
However, population modellers often assume that the variance of the environmental component (in our case, the control function) grows faster than linearly in $z$, i.e. faster than the variance of the demographic component. It would be valuable to investigate what can and cannot be consistently estimated in this setting.
Second, in settings \textbf{S2} and \textbf{S3}, we may want to know what can and cannot be consistently estimated if we relax the assumption of linear divisibility of the control function.

\section{Proofs of non-existence results}

\subsection{Proofs for Section \ref{sec:Framework}}

We first introduce two lemmas that will be used in the proof of Lemma \ref{lma:CBPTVDLimitApproachesZero} and Proposition \ref{ppn:CBPNoConsistentEstimation}.

\begin{lemma}\label{lma:TVDDecreasingBound}
    Let $\{Z_n\}_{n \in \N_0}$ and $\{X_n\}_{n \in \N_0}$ be two Markov chains taking values on $\N_0$.
    For $j,k \in \N_1$ such that $j < k$, if there exists a monotonically decreasing function $\K: [M, \infty) \to \R_{\geq 0}$, $M \in \N_0$, such that
    \begin{equation}\label{eqn:KStepInductionAssumption}
        || \L_{(Z_{j+1},\dots,Z_k)|Z_j=u_j} - \L_{(X_{j+1},\dots,X_k)|X_j=u_j}||_{TV} \leq \K(u_j)
        \quad \forall u_j \geq M,
    \end{equation}
    then we have 
    \begin{align*}
        &|| \L_{(Z_j,\dots,Z_k)|Z_{j-1}=u_{j-1}} - \L_{(X_j,\dots,X_k)|X_{j-1}=u_{j-1}}||_{TV} \\
        &\leq || \L_{Z_j|Z_{j-1} = u_{j-1}} - \L_{X_j|X_{j-1} = u_{j-1}} ||_{TV} + \P(Z_j \leq N | Z_{j-1}=u_{j-1}) + \K(N+1)
    \end{align*}
    for all $u_{j-1}, N \geq M$.
\end{lemma}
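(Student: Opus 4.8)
\section*{Proof proposal for Lemma \ref{lma:TVDDecreasingBound}}

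The plan is to prove the bound via an explicit coupling of the two length-$(k-j+1)$ trajectories $(Z_j,\dots,Z_k)$ and $(X_j,\dots,X_k)$ started from $Z_{j-1}=X_{j-1}=u_{j-1}$, together with the coupling inequality: for any coupling of two laws $P,Q$ on $\N_0^{k-j+1}$ one has $\|P-Q\|_{TV}\leq\P(\text{the coupled variables differ})$.

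Fix $u_{j-1},N\geq M$. First I would couple the one-step transitions: let $(\hat Z_j,\hat X_j)$ be a maximal coupling of $\L_{Z_j|Z_{j-1}=u_{j-1}}$ and $\L_{X_j|X_{j-1}=u_{j-1}}$, so that $\P(\hat Z_j\neq\hat X_j)=\|\L_{Z_j|Z_{j-1}=u_{j-1}}-\L_{X_j|X_{j-1}=u_{j-1}}\|_{TV}$. Then I would extend to a coupling of the whole trajectories, conditionally on $(\hat Z_j,\hat X_j)=(u,u')$: if $u=u'\geq N+1$, draw the continuations $(\hat Z_{j+1},\dots,\hat Z_k)$ and $(\hat X_{j+1},\dots,\hat X_k)$ from a maximal coupling of $\L_{(Z_{j+1},\dots,Z_k)|Z_j=u}$ and $\L_{(X_{j+1},\dots,X_k)|X_j=u}$; otherwise draw them independently from $\L_{(Z_{j+1},\dots,Z_k)|Z_j=u}$ and $\L_{(X_{j+1},\dots,X_k)|X_j=u'}$ respectively. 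By the Markov property of $\{Z_n\}$ and $\{X_n\}$ — which is what removes the conditioning on $u_{j-1}$ from the continuation laws — the resulting $(\hat Z_j,\dots,\hat Z_k)$ has law $\L_{(Z_j,\dots,Z_k)|Z_{j-1}=u_{j-1}}$, and similarly for $\hat X$, so this is a valid coupling with the required marginals.

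It then remains to bound $\P\big((\hat Z_j,\dots,\hat Z_k)\neq(\hat X_j,\dots,\hat X_k)\big)$ by a union bound over the three events $\{\hat Z_j\neq\hat X_j\}$, $\{\hat Z_j=\hat X_j\leq N\}$, and $\{\hat Z_j=\hat X_j\geq N+1,\ (\hat Z_{j+1},\dots,\hat Z_k)\neq(\hat X_{j+1},\dots,\hat X_k)\}$. The first has probability $\|\L_{Z_j|Z_{j-1}=u_{j-1}}-\L_{X_j|X_{j-1}=u_{j-1}}\|_{TV}$ by the choice of maximal coupling; the second is at most $\P(\hat Z_j\leq N)=\P(Z_j\leq N\mid Z_{j-1}=u_{j-1})$ since $\hat Z_j$ has the correct marginal; and for the third, conditioning on $\hat Z_j=\hat X_j=u$ with $u\geq N+1\geq M$, assumption \eqref{eqn:KStepInductionAssumption} bounds the conditional disagreement probability of the continuations by $\K(u)$, which is at most $\K(N+1)$ because $\K$ is monotonically decreasing on $[M,\infty)$ and $u\geq N+1$; averaging over $u$ preserves this bound. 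Summing the three contributions and invoking the coupling inequality gives exactly the stated inequality.

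I expect the argument to be essentially bookkeeping, with no substantial analytic obstacle; the points that do require care are that $N+1\geq M$ (so that both \eqref{eqn:KStepInductionAssumption} and the domain of $\K$ apply on the third event), that the monotonicity of $\K$ is used in the right direction, and that the hierarchical coupling is assembled so that both trajectory marginals are preserved — which is the one place where the Markov property of each chain is genuinely used. A purely measure-theoretic variant would instead write the $Z$-trajectory law as the mixture $\int\L_{Z_j|Z_{j-1}=u_{j-1}}(\mathrm{d}u)\,\delta_u\otimes\L_{(Z_{j+1},\dots,Z_k)|Z_j=u}$, and likewise for $X$, and then conclude via the triangle inequality (splitting the mixing mass at $N$ as above) together with the contractivity of Markov kernels in total variation; it yields the same bound.
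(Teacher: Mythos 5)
Your argument is correct, and it reaches the stated bound by a genuinely different route from the paper. The paper works directly with the sum representation of the total variation distance: it writes the trajectory laws as products of transition probabilities, inserts the hybrid product $p_{Z_j}(u_{j-1},u_j)\prod_{i=j+1}^{k}p_{X_i}(u_{i-1},u_i)$, applies the triangle inequality to split off the one-step term, and then recognises the remainder as the average $\sum_{u_j}p_{Z_j}(u_{j-1},u_j)\,\|\L_{(Z_{j+1},\dots,Z_k)|Z_j=u_j}-\L_{(X_{j+1},\dots,X_k)|X_j=u_j}\|_{TV}$, which it splits at $N$ and bounds by $\P(Z_j\leq N\,|\,Z_{j-1}=u_{j-1})+\K(N+1)$ using monotonicity. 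Your proof is the coupling-theoretic dual of this computation: the maximal coupling of the one-step laws plus the hierarchical coupling of the continuations produces exactly the same three terms via the union bound, and your bookkeeping is sound — the marginals are preserved because the continuation law given $(\hat Z_j,\hat X_j)=(u,u')$ has the correct $Z$-marginal for every $u'$, you correctly note that $N+1\geq M$ is needed so that \eqref{eqn:KStepInductionAssumption} and the domain of $\K$ apply on the third event, and the monotonicity of $\K$ is used in the right direction. What each approach buys: yours is more probabilistically transparent and makes the role of the Markov property explicit (it is what lets the continuation be coupled independently of $u_{j-1}$), at the cost of having to construct and verify a coupling of trajectory-valued random variables; the paper's is a purely elementary manipulation of sums that avoids any coupling machinery and slots directly into the induction in the proof of Lemma \ref{lma:CBPTVDLimitApproachesZero}. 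Your closing remark about the mixture representation $\int\L_{Z_j|Z_{j-1}=u_{j-1}}(\mathrm{d}u)\,\delta_u\otimes\L_{(Z_{j+1},\dots,Z_k)|Z_j=u}$ is in fact essentially the paper's argument, so you have identified both proofs.
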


\begin{proof}
    We write $p_{Z_n}(i,j) := \P(Z_n = j|Z_{n-1} = i)$ and $p_{X_n}(i,j) := \P(X_n = j|X_{n-1} = i)$.
    From the sum representation of the total variation distance (see \cite[Proposition 4.2]{levin17}) and the triangle inequality, 
    \begingroup
    \allowdisplaybreaks
    \begin{align*}
        &|| \L_{(Z_j, \dots, Z_k)|Z_{j-1} = u_{j-1}} - \L_{(X_j, \dots, X_k)|X_{j-1} = u_{j-1}} ||_{TV} \\
        &= \frac{1}{2} \sum_{u_j, \dots, u_k \geq 0} \bigg| \prod_{i=j}^k p_{X_i}(u_{i-1}, u_i) - \prod_{i=j}^k p_{Z_i}(u_{i-1}, u_i) \bigg| \\
        &\leq \frac{1}{2} \sum_{u_j, \dots, u_k \geq 0} \bigg| \prod_{i=j}^k p_{X_i}(u_{i-1}, u_i) - p_{Z_j}(u_{j-1}, u_j) \cdot \prod_{i=j+1}^{k} p_{X_i}(u_{i-1}, u_i) \bigg| \\
        &\quad + \frac{1}{2} \sum_{u_j, \dots, u_k \geq 0} \bigg| p_{Z_j}(u_{j-1}, u_j) \cdot \prod_{i=j+1}^{k} p_{X_i}(u_{i-1}, u_i) - \prod_{i=j}^k p_{Z_i}(u_{i-1}, u_i) \bigg| \\
        &\leq \frac{1}{2} \sum_{u_j=0}^{\infty} \big| p_{X_j}(u_{j-1}, u_j) - p_{Z_j}(u_{j-1}, u_j) \big| \\
        &\quad + \frac{1}{2} \sum_{u_j, \dots, u_k \geq 0} p_{Z_j}(u_{j-1}, u_j) \cdot \bigg| \prod_{i=j+1}^{k} p_{X_i}(u_{i-1}, u_i) - \prod_{i=j+1}^k p_{Z_i}(u_{i-1}, u_i) \bigg| \\
        &= || \L_{Z_j|Z_{j-1} = u_{j-1}} - \L_{X_j|X_{j-1} = u_{j-1}} ||_{TV} \\
        &\quad + \sum_{u_j=0}^{\infty} p_{Z_j}(u_{j-1}, u_j) \cdot || \L_{(Z_{j+1}, \dots, Z_k)|Z_j = u_j} - \L_{(X_{j+1}, \dots, X_k)|X_j = u_j} ||_{TV} \\
        &\leq || \L_{Z_j|Z_{j-1} = u_{j-1}} - \L_{X_j|X_{j-1} = u_{j-1}} ||_{TV} \\
        &\quad + \P(Z_j \leq N | Z_{j-1}=u_{j-1}) + \sum_{u_j=N+1}^{\infty} p_{Z_j}(u_{j-1},u_j) \cdot \K(u_j)\qquad\textrm{for $N\geq M$, by \eqref{eqn:KStepInductionAssumption}.}
    \end{align*}
    \endgroup
 It follows from the assumption that $\K$ is monotonically decreasing that
    \[
        \sum_{u_j=N+1}^{\infty} p_{Z_j}(u_{j-1},u_j) \cdot \K(u_j) \leq \K(N+1) \cdot \sum_{u_j=N+1}^{\infty} p_{Z_j}(u_{j-1},u_j) \leq \K(N+1). \\
    \]
    Combining these two inequalities then yields our desired result.
\end{proof}

\begin{lemma}\label{RecursiveEquationLimit}
    Let $\{f_k\}_{k\in\N_1}$, $f_k: \R_{\geq 0} \to \R_{\geq 0}$, be a recursively-defined set of functions such that
    \[
        \lim_{z\to\infty} f_1(z) = 0
        \quad\text{and}\quad
        f_k(z) = \sum_{i=1}^n \frac{c_i}{z^{q_i}} + f_{k-1}(b\cdot z) \;\text{ for } k > 0,
    \]
    for constants $n\in\N_1$, $c_1, \dots, c_n > 0$, $q_1, \dots, q_n > 0$, and $b > 1$. Then $\lim_{z\to\infty} \lim_{k\to\infty} f_k(z) = 0$.
\end{lemma}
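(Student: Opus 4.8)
The plan is to iterate the recursion explicitly and then exchange the order of the two limits carefully. First I would unroll the recursion: for $k \geq 1$,
\[
    f_k(z) = \sum_{j=0}^{k-2} \sum_{i=1}^n \frac{c_i}{(b^j z)^{q_i}} + f_1(b^{k-1} z)
    = \sum_{i=1}^n c_i z^{-q_i} \sum_{j=0}^{k-2} b^{-j q_i} + f_1(b^{k-1} z).
\]
Since $b > 1$ and each $q_i > 0$, the geometric series $\sum_{j=0}^{\infty} b^{-j q_i}$ converges; call its sum $S_i := (1 - b^{-q_i})^{-1}$. I would then take $k \to \infty$ for fixed $z$: the finite-sum part converges monotonically to $\sum_{i=1}^n c_i S_i z^{-q_i}$, and $f_1(b^{k-1} z) \to 0$ because $b^{k-1} z \to \infty$ and $\lim_{w\to\infty} f_1(w) = 0$. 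Hence
\[
    \lim_{k\to\infty} f_k(z) = \sum_{i=1}^n c_i S_i z^{-q_i}.
\]
Finally I would send $z \to \infty$: each term $c_i S_i z^{-q_i} \to 0$ since $q_i > 0$, so the whole (finite) sum tends to $0$, giving $\lim_{z\to\infty}\lim_{k\to\infty} f_k(z) = 0$.

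The only point requiring a little care is the justification that $\lim_{k\to\infty} f_1(b^{k-1}z) = 0$ — this is immediate from the hypothesis $\lim_{w\to\infty} f_1(w) = 0$ together with $b^{k-1}z \to \infty$ — and the interchange $\lim_{k\to\infty}\sum_{j} = \sum_j \lim_k$, which here is trivial because for each fixed $z$ the partial sums $\sum_{j=0}^{k-2} b^{-jq_i}$ form an increasing sequence bounded above by $S_i$, so they converge (monotone convergence for a nonnegative series, or simply the geometric series formula). I do not anticipate a genuine obstacle; the statement is essentially a bookkeeping exercise once the recursion is unrolled. One alternative, if one wants to avoid writing out the closed form, is a pure $\varepsilon$-argument: given $\varepsilon > 0$, first choose $N$ with $\sum_{i=1}^n c_i z^{-q_i} \sum_{j\geq 0} b^{-jq_i} < \varepsilon/2$ for all $z \geq N$ (possible since the tail sums are bounded and $z^{-q_i}\to 0$), then for each such $z$ choose $K$ large enough that $f_1(b^{K-1}z) < \varepsilon/2$; combining gives $f_k(z) < \varepsilon$ for $k \geq K$, hence $\lim_{k\to\infty} f_k(z) \leq \varepsilon$ for all $z \geq N$, and $\varepsilon$ was arbitrary. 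I would present the explicit-closed-form version as it is cleaner and makes the double limit transparent.
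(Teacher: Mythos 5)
Your proposal is correct and follows essentially the same route as the paper's proof: unroll the recursion to obtain $f_k(z)=\sum_{i=1}^n c_i z^{-q_i}\sum_{j=0}^{k-2}b^{-jq_i}+f_1(b^{k-1}z)$, sum the geometric series using $b^{-q_i}<1$, send $k\to\infty$ to get $\sum_{i=1}^n c_i(1-b^{-q_i})^{-1}z^{-q_i}$, and then let $z\to\infty$. No gaps.
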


\begin{proof}
    For a given $k \in \N_1$, we can expand $f_k$ iteratively to see that
    % \begin{align*}
    %     f_k(z) &= \sum_{i=1}^n \frac{c_i}{z^{q_i}} \cdot \sum_{j=0}^{k-2} \big(b^{-q_i}\big)^j + f_1(b^{k-1} \cdot z) \\
    %     &= \sum_{i=1}^n \frac{c_i}{z^{q_i}} \cdot \frac{1 - b^{-(k-1)q_i}}{1-b^{-q_i}} + f_1(b^{k-1} \cdot z).
    % \end{align*}
     $$
        f_k(z) = \sum_{i=1}^n \frac{c_i}{z^{q_i}} \cdot \sum_{j=0}^{k-2} \big(b^{-q_i}\big)^j + f_1(b^{k-1} \cdot z) = \sum_{i=1}^n \frac{c_i}{z^{q_i}} \cdot \frac{1 - b^{-(k-1)q_i}}{1-b^{-q_i}} + f_1(b^{k-1} \cdot z).
    $$
    Since $b > 1$ and $q_i > 0$ for all $1\leq i \leq n $, each $b^{-q_i} < 1$. Therefore, given that $\lim_{x\to\infty} f_1(x) = 0$, we have that
    \[
        \lim_{k\to\infty} f_k(z) = \sum_{i=1}^n \frac{c_i}{(1-b^{-q_i}) \cdot z^{q_i}},
    \]
    from which our desired result follows by letting $z\to\infty$.
\end{proof}

Given the above lemmas, we now proceed to prove Lemma \ref{lma:CBPTVDLimitApproachesZero}, from which Proposition \ref{ppn:CBPNoConsistentEstimation} follows.

\begin{proof}[Proof of Lemma \ref{lma:CBPTVDLimitApproachesZero}.]
    Since $\Z$ is assumed to be supercritical, $\liminf_{z\to\infty} \tau_Z(z) > 1$. Hence for any $t$ such that $1 < t < \liminf_{z\to\infty} \tau_Z(z)$, there exists $M_1 \in \N_1$ such that for all $z \geq M_1$, $\varepsilon(z) \cdot m > t \cdot z$.
    In addition, if $||\L_{Z_1|Z_0=z_0} - \L_{X_1|X_0=z_0}||_{TV} = O(z_0^{-q})$, then there exists $s > 0$ such that for all $z_0 \geq M_2$, $M_2 \in \N_1$, $||\L_{Z_1|Z_0=z_0} - \L_{X_1|X_0=z_0}||_{TV} \leq s \cdot z_0^{-q}$.

    Given such values of $s$ and $t$, and for $M_3 := M_1 \vee M_2$, we can show by induction that for any $j,k \in \N_1$ with $k \geq 1$ and $1 \leq j \leq k$,
    \begin{equation}\label{eqn:CBPKStepRecursiveRelationship}
        || \L_{(Z_j, \dots, Z_k)|Z_{j-1} = u_{j-1}} - \L_{(X_j, \dots, X_k)|X_{j-1} = u_{j-1}} ||_{TV} \leq \K_{k-j+1}(u_{j-1}),
    \end{equation}
    for a decreasing function $\K_{k-j+1}: [M_3, \infty) \to \R_{\geq 0}$ given by $\K_1(z) :=s \cdot z^{-q}$ and for $j < k$, $\K_{k-j+1}(z) := s \cdot z^{-q} + \frac{(a \sigma^2 + b m^2)}{(1-\alpha)^2 t^2 \cdot z} + \K_{k-j}(\alpha t \cdot z)$, where $a$ and $b$ are given in \eqref{eqn:BoundedControlFunctionMoments},  and where $\alpha \in (1/t,1)$. \\

    \textit{Base case:} Since CBPs are time-homogeneous, it is immediate from our assumption on the one-step TVD bound between $\Z$ and $\X$ that
    \[
        ||\L_{Z_k|Z_{k-1}=u_{k-1}} - \L_{X_k|X_{k-1}=u_{k-1}}||_{TV} \leq \frac{s}{u_{k-1}^q}\qquad\textrm{for any $u_{k-1} \geq M_3$.}
    \]

    \textit{Induction step:} For $j, u_j \in \N_1$ such that $j < k$ and $u_j \geq M_3$, let us assume that
    \begin{equation*}
        || \L_{(Z_{j+1},\dots,Z_k)|Z_j=u_j} - \L_{(X_{j+1},\dots,X_k)|X_j=u_j}||_{TV} \leq \K_{k-j}(u_j),
    \end{equation*}
    where $\K_{k-j}: [M_3, \infty) \to \R_{\geq 0}$ is a monotonically decreasing function. Applying Lemma \ref{lma:TVDDecreasingBound} in the first step and Chebyshev's inequality in the second, we obtain
    \begin{align*}
        &|| \L_{(Z_j,\dots,Z_k)|Z_{j-1}=u_{j-1}} - \L_{(X_{j+1},\dots,X_k)|X_{j-1}=u_{j-1}}||_{TV} \\
        &\leq || \L_{Z_j|Z_{j-1} = u_{j-1}} - \L_{X_j|X_{j-1} = u_{j-1}} ||_{TV} + \P(Z_{j} \leq N | Z_{j-1} = u_{j-1}) + \K_{k-j}(N+1) \\
        &\leq || \L_{Z_j|Z_{j-1} = u_{j-1}} - \L_{X_j|X_{j-1} = u_{j-1}} ||_{TV} + \frac{\varepsilon(u_{j-1}) \cdot \sigma^2 + \nu^2(u_{j-1}) \cdot m^2}{(\varepsilon(u_{j-1}) \cdot m - N)^2} + \K_{k-j}(N+1)
    \end{align*}
    for $u_{j-1} \geq M_3$ and $M_3 \leq N < \varepsilon(u_{j-1}) \cdot m$.
    Given \eqref{eqn:BoundedControlFunctionMoments} and that $||\L_{Z_1|Z_0=z} - \L_{X_1|X_0=z}||_{TV} = O(z^{-q})$, and taking $N := \lfloor \alpha t \cdot u_{j-1} \rfloor$ for $\alpha \in (1/t, 1)$, we can simplify the above bound to
    \begin{align*}
        &|| \L_{(Z_j, \dots, Z_k)|Z_{j-1} = u_{j-1}} - \L_{(X_j, \dots, X_k)|X_{j-1} = u_{j-1}} ||_{TV} \\
        &\leq \frac{s}{u_{j-1}^{q}} + \frac{(a \sigma^2 + b m^2) \cdot u_{j-1}}{(t \cdot u_{j-1} - \lfloor \alpha t \cdot u_{j-1} \rfloor)^2} + \K_{k-j}(\lfloor \alpha t \cdot u_{j-1} \rfloor + 1) \\
        &\leq \frac{s}{u_{j-1}^{q}} + \frac{(a \sigma^2 + b m^2)}{(1-\alpha)^2 t^2 \cdot u_{j-1}} + \K_{k-j}(\alpha t \cdot u_{j-1}) \;=: \K_{k-j+1}(u_{j-1}).
    \end{align*}
    Since we assumed that $\K_{k-j}$ was a decreasing function on $[M_3, \infty)$, we see that $\K_{k-j+1}$ is also a decreasing function on $[M_3, \infty)$. \\

    Having shown the recursive relationship \eqref{eqn:CBPKStepRecursiveRelationship}, and since $\alpha \in (1/t, 1)$ implies $\alpha t > 1$, by taking $f_i = \K_i$ for $1\leq i \leq k$ we see that the sequence of functions $\{ \K_i \}_{1\leq i\leq k}$ satisfies the requirements of Lemma \ref{RecursiveEquationLimit}.
    It hence follows that $\lim_{z\to\infty}\lim_{k\to\infty} \K_k(z) = 0$.
   Since for any $z_0$,
    \[
        0 \leq || \L_{\Z} - \L_{\X} ||_{TV}
        = \lim_{k\to\infty} || \L_{(Z_1,\dots,Z_k)|Z_0=z_0} - \L_{(X_1,\dots,X_k)|X_0=z_0}||_{TV}
        \leq \lim_{k\to\infty}\K_k(z_0),
    \]
    it further follows that $\lim_{z_0\to\infty} || \L_{\X} - \L_{\Z} ||_{TV} = 0$.
\end{proof}

\begin{proof}[Proof of Proposition \ref{ppn:CBPNoConsistentEstimation}.]
    For a given set $\Pi$ of supercritical CBPs satisfying \eqref{eqn:BoundedControlFunctionMoments} and with transition probabilities parameterised by $\btheta$,
    assume there exist processes $\Z, \X \in \Pi$ with $\btheta_Z \neq \btheta_X$ such that $||\L_{Z_1|Z_0=z_0} - \L_{X_1|X_0=z_0}||_{TV} = O(z_0^{-q})$ for some $q > 0$.
    Let $\{ \hat{\btheta}_k \}_{k\in\N_1}$ be a sequence of estimators for $\btheta$.

    Suppose that the sequence of estimators $\{ \hat{\btheta}_k \}_{k\in\N_1}$ is (weakly) consistent for $\btheta$ on the set of unbounded growth of the process.
    Then there exists a subsequence $\{k_j\}_{j\in\N_1}$ such that $\{\hat{\btheta}_{k_j}\}_{j\in\N_1}$ forms a strongly consistent sequence of estimators on the set of unbounded growth---that is, on that set, $\hat{\btheta} := \lim_{j\to\infty} \hat{\btheta}_{k_j}$ exists and equals $\btheta$ almost surely (see, for example, \cite[Theorem 2.3.2]{durrett19}). In a slight abuse of notation, we say that $\hat{\btheta}$ is itself strongly consistent.

    Since
    \[
        || \L_{\Z} - \L_{\X} ||_{TV} = \sup_{C \subseteq \N_0^{\infty}} | \P(\Z \in C) - \P(\X \in C) |,
    \]
    then
    \begin{equation}\label{eqn:ThetaHatTVDBound}
        || \L_{\Z} - \L_{\X} ||_{TV} \geq | \P(\hat{\btheta}(\Z) = \btheta_X,\; Z_n \to \infty) - \P(\hat{\btheta}(\X) = \btheta_X,\; X_n \to \infty) |.
    \end{equation}
    But, since $\hat{\btheta}$ is a strongly consistent estimator for $\btheta$ on the set of unbounded growth, and $\btheta_Z \neq \btheta_X$,
    \[
        \P(\hat{\btheta}(\Z) = \btheta_X,\; Z_n \to \infty) = 0,\qquad\textrm{while}\qquad \P(\hat{\btheta}(\X) = \btheta_X,\; X_n \to \infty) = \P(X_n \to \infty).
    \]
    Therefore, given that $\hat{\btheta}$ is strongly consistent, we can rewrite \eqref{eqn:ThetaHatTVDBound} as
    \[
        || \L_{\Z} - \L_{\X} ||_{TV} \geq \P(X_n \to \infty).
    \]
    However, we note the following two facts:
    \begin{enumerate}
        \item[(i)] Since $||\L_{Z_1|Z_0=z_0} - \L_{X_1|X_0=z_0}||_{TV} = O(z_0^{-q})$, then $\lim_{z_0\to\infty} || \L_{\Z} - \L_{\X} ||_{TV} = 0$ by Lemma \ref{lma:CBPTVDLimitApproachesZero}, and
        \item[(ii)] Given \eqref{eqn:BoundedControlFunctionMoments}, \cite[Theorem 3.2]{gonzalez18} tells us that $\P(Z_n \to \infty) \to 1$ as the initial population size $z_0$ approaches infinity.
    \end{enumerate}
    This creates a contradiction, since (i) and (ii) tell us that we can find $z_0 \in \N_1$ sufficiently large such that $|| \L_{\Z} - \L_{\X} ||_{TV} < \P(X_n \to \infty)$. So, in conclusion, no consistent estimator for $\btheta$ exists on the set of unbounded growth.
\end{proof}

\subsection{Proofs for Section \ref{sec:CBPEstimationKnownPhi}}

\begin{proof}[Proof of Lemma \ref{lma:KnownPhiOneStepTVDBound}.]
    Since our two processes $\Z$ and $\X$ belong to the set $\Pi^{(k)}$, they must have a common control function, $\phi(\cdot)$.
    If we consider the realisations of $\phi(\cdot)$ as forming interstitial states within the processes $\Z$ and $\X$, we can recognise the expanded processes $\{\phi(z_0),\, Z_1,\, \phi(Z_1),\, Z_2,\, \dots \}$ and $\{\phi(z_0),\, X_1,\, \phi(X_1),\, X_2,\, \dots \}$, consisting of alternating progenitor numbers and population sizes, as two time-inhomogeneous Markov chains.
    We can see from the definition of the total variation distance that
    \[
        || \L_{Z_1 | Z_0 = z_0} - \L_{X_1 | X_0 = z_0} ||_{TV}
        \leq || \L_{(\phi(Z_0), Z_1) | Z_0 = z_0} - \L_{(\phi(X_0), X_1) | X_0 = z_0} ||_{TV}.
    \]
    Additionally, since $\xi_Z$ and $\xi_X$ both have the same mean $m$ and variance $\sigma^2$, finite third moments, and lattice size one, we know from \cite[Theorem 9]{petrov64} that, for a given $u\in\N_1$, there exists a constant $c$ depending on $\xi_Z$ and $\xi_X$ such that
    \[
        || \L_{Z_1 | \phi(Z_0)=u} - \L_{X_1 | \phi(X_0)=u} ||_{TV}
        = || \L_{\sum_{i=1}^u \xi_{Z,i}} - \L_{\sum_{i=1}^u \xi_{X,i}} ||_{TV}
        \leq \frac{c}{\sqrt{u}}.
    \]
    Hence, for any $N\in\N_0$, it follows from Lemma \ref{lma:TVDDecreasingBound} that
    \begin{align*}
        &\quad || \L_{(\phi(Z_0), Z_1) | Z_0 = z_0} - \L_{(\phi(X_0), X_1) | X_0 = z_0} ||_{TV} \\
        &\leq || \L_{\phi(Z_0) | Z_0 = z_0} - \L_{\phi(X_0)| X_0 = z_0} ||_{TV} + \P(\phi_Z(Z_0) \leq N | Z_0 = z_0) + \frac{c}{\sqrt{N+1}} \\
        &= || \L_{\phi(z_0)} - \L_{\phi(z_0)} ||_{TV} + \P(\phi(z_0) \leq N) + \frac{c}{\sqrt{N+1}} \\
        &= \P(\phi(z_0) \leq N) + \frac{c}{\sqrt{N+1}}.
    \end{align*}
    Then, taking $N := \lfloor \alpha \cdot \varepsilon(u) \rfloor$ for $\alpha \in (0, 1)$, we can use Chebyshev's inequality to further bound
    \begin{align*}
        || \L_{(\phi(Z_0), Z_1) | Z_0 = z_0} - \L_{(\phi(X_0), X_1) | X_0 = z_0} ||_{TV}
        &\leq \frac{\nu^2(z_0)}{\big( \varepsilon(z_0) - \lfloor \alpha \cdot \varepsilon(z_0) \rfloor \big)^2} + \frac{c}{\sqrt{\lfloor \alpha \cdot \varepsilon(z_0) \rfloor + 1}} \\
        &\leq \frac{\nu^2(z_0)}{(1-\alpha)^2 \varepsilon^2(z_0)} + \frac{c}{\sqrt{\alpha \cdot \varepsilon(z_0)}}.
    \end{align*}
    Under assumption \eqref{eqn:BoundedControlFunctionMoments} there exists a constant $b$ such that $\nu^2(z) \leq bz$ for all $z \in \N_1$, while it follows from the assumption of supercriticality that there exists $M>0$ such that $\varepsilon(z) > m \cdot z$ for all $z > M$.
    Hence for $z > M$,
    \[
        || \L_{Z_1|Z_0 = z_0} - \L_{X_1|X_0 = z_0} ||_{TV}
        \leq \frac{b}{(1-\alpha)^2 m^2 \cdot z_0} + \frac{c}{\sqrt{\alpha m \cdot z_0}} = O(z_0^{-1/2}).
    \]
\end{proof}

\subsection{Proofs for Section \ref{sec:CBPEstimationUnknownControl}}

The proof of Lemma \ref{lma:LinearlyDivisibleOneStepTVDBound} relies on the control functions of our CBPs converging to a discretised normal distribution as the population size gets large. We use the following convention to describe this distribution:

\begin{definition}
    We say that a random variable $W$ has a discretised normal distribution with parameters $m$ and $\sigma^2$, written $W \sim \dn(m, \sigma^2)$, if for every $k\in\mathbbm{Z}$,
	\[
		\P(W = k)
		= \frac{1}{\sqrt{2\pi\sigma^2}} \int_{k - \frac{1}{2}}^{k + \frac{1}{2}} e^{-\frac{(u - m)^2}{2\sigma^2}} du.
	\]
\end{definition}

We can then leverage Chen's Stein's method result \cite[Theorem 7.4]{chen11} to find a total variation distance bound between a sum of i.i.d.\ random variables and a discretised normal.

\begin{lemma}\label{lma:DiscretisedNormalTVDBound}
    Let $X, X_1, \dots, X_n$, $n \in \N_1$, be i.i.d.\ random variables on $\N_0$ with $\E X := m$, $\V(X) := \sigma^2 > 0$, and finite third absolute central moment $\rho := \E|X - m|^3$.
    Define $S_n := \sum_{i=1}^n X_i$, and let $W_n$ be a random variable with a $\dn(nm, n\sigma^2)$ distribution. Then
    \begin{align*}
        ||\L_{S_n} - \L_{W_n} ||_{TV} &\leq \sqrt{\frac{2}{\pi}} \left( \frac{3\rho}{\sigma^2} + 2 \right) \Big( 1 + 4 (n-1)\left( 1- ||\L_{X} - \L_{(X + 1)} ||_{TV} \right) \Big)^{-\frac{1}{2}} \\
        &\quad + \left( 5 + 3\sqrt{\frac{\pi}{8}} \right)\frac{\rho}{\sqrt{n}\sigma^3} + \frac{1}{2\sqrt{2\pi n}\sigma}.
	\end{align*}
\end{lemma}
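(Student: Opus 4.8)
The plan is to obtain the estimate from the Stein's-method total variation bound of Chen, Goldstein and Shao for normal approximation of sums of independent integer-valued random variables, \cite[Theorem~7.4]{chen11}, and then specialise it to i.i.d.\ summands and simplify the constants. That theorem bounds the total variation distance between the law of a sum of independent $\N_0$-valued random variables and the discretised normal with matching mean and variance by a sum of three pieces: a Lindeberg/Berry--Esseen piece built from the third absolute central moments of the summands; a ``discretisation'' piece, measuring the cost of replacing the Gaussian density by the step density that defines $\dn$; and a ``smoothing'' piece equal to a Stein-equation solution bound times a concentration-type quantity for the leave-one-out sum, the latter being controlled through the shift distances $||\L_{X_i} - \L_{X_i + 1}||_{TV}$ of the individual summands.

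First I would make this explicit in our setting, where every summand is a copy of $X$, so that $\E X = m$, $\V(X) = \sigma^2$, $\E|X-m|^3 = \rho$, $\E S_n = nm$ and $\V(S_n) = n\sigma^2$. Substituting into \cite[Theorem~7.4]{chen11}, the Berry--Esseen piece becomes $(5 + 3\sqrt{\pi/8})\,\rho/(\sqrt n\,\sigma^3)$, the discretisation piece becomes $1/(2\sqrt{2\pi n}\,\sigma)$, and the smoothing piece becomes $\sqrt{2/\pi}\,(3\rho/\sigma^2 + 2)$ times a concentration factor for $S_{n-1} = \sum_{i=1}^{n-1} X_i$. As a sanity check, the discretisation piece can be recovered by hand: since $S_n$ and $W_n$ are integer-valued, $||\L_{S_n} - \L_{W_n}||_{TV} = ||\L_{S_n + U} - \L_{W_n + U}||_{TV}$ for an independent $U \sim \unif(-1/2, 1/2)$, the density of $W_n + U$ is exactly the step function obtained by averaging the $N(nm, n\sigma^2)$ density over consecutive unit intervals, and a short $L^1$ comparison of that step density with the Gaussian density produces the constant $1/(2\sqrt{2\pi n}\,\sigma)$.

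It remains to bring the concentration factor to the stated form. Since the summands are i.i.d., the leave-one-out sum is a sum of $n-1$ i.i.d.\ copies of $X$, and the classical concentration estimate for sums of i.i.d.\ lattice random variables (the Kanter-type mechanism underlying \cite[Theorem~7.4]{chen11}) yields the factor $\big(1 + 4(n-1)\,(1 - ||\L_X - \L_{X+1}||_{TV})\big)^{-1/2}$, whose denominator grows linearly in $n$ at a rate proportional to the per-summand spread $1 - ||\L_X - \L_{X+1}||_{TV}$. Assembling this with the $\sqrt{2/\pi}\,(3\rho/\sigma^2 + 2)$ prefactor --- the $\sqrt{2/\pi}$ originating from a Stein-equation solution bound and the polynomial $3\rho/\sigma^2 + 2$ from the per-summand moment expansion --- gives the first term of the claimed bound, and adding the three pieces completes the proof.

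The content of the argument lies in \cite[Theorem~7.4]{chen11}; what remains is bookkeeping, and the main obstacle is to carry it out without degrading the constants. Concretely, one must reconcile the normalisation and the rounding convention of \cite[Theorem~7.4]{chen11} with the un-normalised sum $S_n$ and the $\dn(nm, n\sigma^2)$ convention adopted here, verify that the concentration factor coming out of that theorem specialises to exactly $\big(1 + 4(n-1)(1 - ||\L_X - \L_{X+1}||_{TV})\big)^{-1/2}$ in the i.i.d.\ case, and keep control of the factors of $2$ that proliferate in total-variation manipulations. If \cite[Theorem~7.4]{chen11} expresses the smoothing piece through a concentration function rather than the shift distance $||\L_X - \L_{X+1}||_{TV}$, one additionally needs the elementary comparison between those two quantities; this is routine but must be tracked carefully to land on the stated constants.
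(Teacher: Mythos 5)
Your proposal is correct and follows essentially the same route as the paper: apply \cite[Theorem 7.4]{chen11} specialised to i.i.d.\ summands, and then control the shift distance $||\L_{S_{n-1}} - \L_{(S_{n-1}+1)}||_{TV}$ of the leave-one-out sum by a Kanter-type concentration bound, exactly as you anticipate. The paper makes that last step precise by invoking \cite[Corollary 1.6]{mattner07}, which gives $||\L_{S_{n-1}} - \L_{(S_{n-1}+1)}||_{TV} \leq \sqrt{2/\pi}\,\big(1/4 + (n-1)(1-||\L_{X} - \L_{(X+1)}||_{TV})\big)^{-1/2}$; note that this cited inequality (not the Stein-equation solution bound) is the source of the $\sqrt{2/\pi}$ and of the factor $2$ that converts the prefactor $3\rho/(2\sigma^2)+1$ from \cite[Theorem 7.4]{chen11} into the stated $3\rho/\sigma^2+2$, so it is a genuine external input rather than routine bookkeeping.
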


\begin{proof}
    We can use \cite[Theorem 7.4]{chen11}, simplified to the case of an i.i.d.\ sum, to see that
	\begin{equation*}
		||\L_{S_n} - \L_{W_n} ||_{TV}
		\leq \left( \frac{3\rho}{2\sigma^2} + 1 \right) || \L_{S_{n-1}} - \L_{(S_{n-1} + 1)} ||_{TV}
		+ \left( 5 + 3\sqrt{\frac{\pi}{8}} \right)\frac{\rho}{\sqrt{n}\sigma^3}
		+ \frac{1}{2\sqrt{2\pi n}\sigma}.
	\end{equation*}
    A bound on $|| \L_{S_{n-1}} - \L_{(S_{n-1} + 1)} ||_{TV}$ is then provided by \cite[Corollary 1.6]{mattner07}, as
	\begin{align*}
		||\L_{S_{n-1}} - \L_{(S_{n-1} + 1)} ||_{TV}
		&\leq \sqrt{\frac{2}{\pi}} \left( \frac{1}{4} + (n-1)\left( 1 - ||\L_{X} - \L_{(X + 1)} ||_{TV} \right) \right)^{-\frac{1}{2}} \\
		&= 2 \sqrt{\frac{2}{\pi}} \Big( 1 + 4 (n-1)\left( 1 - ||\L_{X} - \L_{(X + 1)} ||_{TV} \right) \Big)^{-\frac{1}{2}}.
	\end{align*}
    Put together, this yields the desired result.
\end{proof}

Lemma \ref{lma:DiscretisedNormalTVDBound} requires that the third \textit{absolute} central moment of our random variables be bounded. Given these random variables take values on $\N_0$, we can show that it will be finite if the third central moment of the random variables is.

\begin{lemma}\label{lma:ThirdAbsoluteCentralMomentBound}
    Let $X$ be a random variable taking values on $\N_0$ with mean $m$, variance $\sigma^2$, and $\gamma := \E(X - m)^3$ finite. Then
    \[
        \E|X - m|^3 \leq 8(\gamma + 3m\sigma^2 + m^3).
    \]
\end{lemma}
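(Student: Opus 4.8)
The plan is to bound $\E|X-m|^3$ by splitting the expectation according to whether $X \geq m$ or $X < m$, and in the second region exploit that $X \geq 0$, so $0 \leq m - X \leq m$ whenever $X < m$. First I would write
\[
    \E|X-m|^3 = \E\big[(X-m)^3 \1_{\{X \geq m\}}\big] + \E\big[(m-X)^3 \1_{\{X < m\}}\big].
\]
On the set $\{X < m\}$ we have $0 \leq m - X \leq m$, hence $(m-X)^3 \leq m^2 (m - X) \leq m^2 \cdot m = m^3$ crudely, or more usefully $(m-X)^3 \le m \, (m-X)^2$; either way this term is controlled by $m^2 \sigma^2$ (via $(m-X)^3 \le m(m-X)^2$ and $\E(m-X)^2\1_{\{X<m\}} \le \sigma^2$) plus lower-order pieces. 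The first term I would relate back to $\gamma = \E(X-m)^3$ by writing $\E[(X-m)^3\1_{\{X\geq m\}}] = \gamma - \E[(X-m)^3 \1_{\{X<m\}}] = \gamma + \E[(m-X)^3\1_{\{X<m\}}]$, which is again bounded using the same region estimate.

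The key steps in order: (1) decompose $\E|X-m|^3$ into the two sign regions; (2) on $\{X<m\}$ use $0 \le m-X \le m$ to bound $(m-X)^3$ by a product of $(m-X)$ or $(m-X)^2$ with powers of $m$, yielding a bound in terms of $m$, $\sigma^2$, and $m^3$; (3) rewrite the positive-region term via $\gamma$ minus the negative-region contribution, so that $\E[(X-m)^3\1_{\{X\ge m\}}] \le \gamma + (\text{negative-region bound})$; (4) add the two contributions and collect terms, absorbing everything into a constant multiple of $\gamma + 3m\sigma^2 + m^3$. One should be somewhat careful that $m$ need not be an integer, so the events $\{X \geq m\}$ and $\{X < m\}$ are taken literally as stated; and one should double-check the crude constant $8$ is actually attained — for instance, bounding $(m-X)^3 \le m^3$ on $\{X<m\}$ is wasteful, but bounding $(m-X)^3 \le m(m-X)^2$ and then $\E[(m-X)^2 \1_{\{X<m\}}] \le \E(X-m)^2 = \sigma^2$ gives $m\sigma^2$, and similar elementary inequalities such as $(m-X)^3 \le m^2(m-X)$ with $\E[(m-X)\1_{\{X<m\}}] \le ?$ need a bound like $\E(m-X)_+ \le \sigma$ or $\le \sqrt{\E(m-X)_+^2}$; tracking which of these gives the cleanest route to the constant $8$ is the only real decision point.

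I do not expect any genuine obstacle here — this is an elementary moment inequality. The main thing to get right is the bookkeeping: choosing the splitting of $(m-X)^3$ on the region $\{X < m\}$ so that, after recombining with $\gamma$ and with the crude factor-of-2 losses from the triangle-type steps, the final bound lands exactly at $8(\gamma + 3m\sigma^2 + m^3)$ rather than some other constant. A safe approach is to bound each occurrence of $(m-X)^3\1_{\{X<m\}}$ by $m^3$ outright (since $m - X \le m$), giving $\E[(m-X)^3\1_{\{X<m\}}] \le m^3 \P(X<m) \le m^3$; then $\E[(X-m)^3\1_{\{X\ge m\}}] \le \gamma + m^3$ and $\E[(m-X)^3\1_{\{X<m\}}] \le m^3$, so $\E|X-m|^3 \le \gamma + 2m^3$, which is already $\le 8(\gamma + 3m\sigma^2 + m^3)$. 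Since the paper only needs \emph{finiteness} of $\E|X-m|^3$ from $\gamma$ finite, even this wasteful bound suffices, so I would present the short argument and not optimize the constant.
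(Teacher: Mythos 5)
Your region-splitting route is genuinely different from the paper's, which dispenses with the lemma in three lines: Minkowski gives $(\E|X-m|^3)^{1/3}\le(\E|X|^3)^{1/3}+|m|$, Jensen and H\"older give $|m|\le\E|X|\le(\E|X|^3)^{1/3}$, hence $\E|X-m|^3\le 8\,\E|X|^3=8\,\E X^3=8(\gamma+3m\sigma^2+m^3)$. Your refined variant can be made complete and even sharper: the identity $\E|X-m|^3=\gamma+2\,\E\big[(m-X)^3\1_{\{X<m\}}\big]$ combined with $(m-X)^3\le m(m-X)^2$ on $\{X<m\}$ (valid because $X\ge0$) and $\E\big[(m-X)^2\1_{\{X<m\}}\big]\le\sigma^2$ yields $\E|X-m|^3\le\gamma+2m\sigma^2\le\gamma+3m\sigma^2+m^3=\E X^3$, where the middle inequality just adds the nonnegative quantity $m\sigma^2+m^3$ and the final expression is nonnegative, hence trivially at most $8(\gamma+3m\sigma^2+m^3)$. (Minor typo: your parenthetical says this term is controlled by $m^2\sigma^2$, but the chain you cite gives $m\sigma^2$.)

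The gap is in the ``safe approach'' you actually commit to at the end. The assertion that $\gamma+2m^3$ is ``already'' at most $8(\gamma+3m\sigma^2+m^3)$ is not immediate, because $\gamma$ is a third \emph{central} moment and can be negative for a nonnegative random variable (e.g.\ $X=0$ w.p.\ $0.1$ and $X=10$ w.p.\ $0.9$ has $\gamma=-72$). What is needed is $7\gamma+24m\sigma^2+6m^3\ge0$, and the lower bounds on $\gamma$ that your crude estimate provides for free ($\gamma\ge-m^3$, or $\gamma\ge-(3m\sigma^2+m^3)$ from $\E X^3\ge0$) leave you with $24m\sigma^2-m^3$ or $3m\sigma^2-m^3$ respectively, both of which can be negative when $m^2$ is large relative to $\sigma^2$. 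Closing this requires precisely the refined estimate $\E\big[(m-X)^3\1_{\{X<m\}}\big]\le m\sigma^2$, which gives $\gamma\ge-m\sigma^2$ and hence $7\gamma+24m\sigma^2+6m^3\ge17m\sigma^2+6m^3\ge0$. So the lemma is true by your method, but you must either present the refined route in full or supplement the crude route with this lower bound on $\gamma$; as written, the last step does not follow.
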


\begin{proof}
    By Minkowski's inequality, $(\E|X - m|^3)^{1/3} \leq (E|X|^3)^{1/3} + (E|-m|^3)^{1/3} = (E|X|^3)^{1/3} + |m|$.
    Using Jensen's inequality and subsequently H{\"o}lder's inequality, we see that $|m| \leq \E|X| \leq (E|X|^3)^{1/3}$. It follows that $(\E|X - m|^3)^{1/3} \leq 2 (E|X|^3)^{1/3}$, from which we can conclude $\E|X - m|^3 \leq 8 \cdot \E|X|^3$.

    However, since $X$ is supported on $\N_0$, $\E|X|^3 = \E X^3$, and we can show by expanding $\E(X-m)^3$ that $\E X^3 = \gamma + 3m\sigma^2 + m^3$. The desired result follows.
\end{proof}

We can formulate an upper bound on the total variation distance between two discretised normals with different parameters as follows.

\begin{lemma}\label{lma:TVDBetweenDiscretisedNormals}
    Suppose that $W \sim \dn(m, \sigma^2)$ and $\tilde{W} \sim \dn(\tilde{m}, \tilde{\sigma}^2)$, for $m,\tilde{m} \in \R$ and $\sigma^2, \tilde{\sigma}^2 > 0$. Then
    \[
        || \L_W - \L_{\tilde{W}} ||_{TV}
        \leq \frac{3| \sigma^2 - \tilde{\sigma}^2 |}{2(\sigma^2 \vee \tilde{\sigma}^2)} + \frac{| m - \tilde{m} |}{2(\sigma \vee \tilde{\sigma})}.
    \]
\end{lemma}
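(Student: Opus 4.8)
The plan is to bound the total variation distance between the two discretised normals by comparing each to the underlying continuous normals and using the known bound for the TV distance between two continuous Gaussians. First I would observe that if $W\sim\dn(m,\sigma^2)$ then $\P(W=k)=\P(U\in[k-\tfrac12,k+\tfrac12))$ where $U\sim\mathrm{N}(m,\sigma^2)$; consequently, writing $\mu$ for the discrete (counting-measure) law of $W$ and $\mathcal{N}_{m,\sigma^2}$ for the continuous law, the map that sends a continuous distribution to its "round to nearest integer" pushforward is a (deterministic) Markod kernel, and pushforwards can only decrease total variation distance. Hence
\[
\|\L_W-\L_{\tilde W}\|_{TV}\le \|\mathrm{N}(m,\sigma^2)-\mathrm{N}(\tilde m,\tilde\sigma^2)\|_{TV}.
\]
This reduces the lemma to a purely continuous statement about Gaussians.

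Next I would invoke (or quickly derive) a clean bound for $\|\mathrm{N}(m,\sigma^2)-\mathrm{N}(\tilde m,\tilde\sigma^2)\|_{TV}$ in terms of $|m-\tilde m|$ and $|\sigma^2-\tilde\sigma^2|$. The standard route is the triangle inequality through an intermediate Gaussian: bound $\|\mathrm{N}(m,\sigma^2)-\mathrm{N}(\tilde m,\sigma^2)\|_{TV}$ (same variance, shifted mean) and $\|\mathrm{N}(\tilde m,\sigma^2)-\mathrm{N}(\tilde m,\tilde\sigma^2)\|_{TV}$ (same mean, different variance) separately. For the mean-shift term, using Pinsker's inequality or a direct computation gives $\|\mathrm{N}(m,\sigma^2)-\mathrm{N}(\tilde m,\sigma^2)\|_{TV}\le \tfrac{|m-\tilde m|}{2\sigma}$ (this is exactly $\mathbb{P}(|Z|\le |m-\tilde m|/(2\sigma))$ type expression, which is at most $|m-\tilde m|/(2\sigma\sqrt{2\pi})\cdot\sqrt{2\pi}$, and in any case $\le |m-\tilde m|/(2\sigma)$). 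For the variance term, one uses the KL divergence $D(\mathrm{N}(0,\sigma^2)\|\mathrm{N}(0,\tilde\sigma^2))=\tfrac12(\sigma^2/\tilde\sigma^2-1-\log(\sigma^2/\tilde\sigma^2))$ together with Pinsker, or a more elementary bound, to get something of the form $\tfrac{C|\sigma^2-\tilde\sigma^2|}{\sigma^2\vee\tilde\sigma^2}$; with care the constant can be taken to be $3/2$. Taking the smaller of $\sigma,\tilde\sigma$ in the denominator of the mean term (which only weakens the bound but matches the claimed form) then yields exactly
\[
\|\L_W-\L_{\tilde W}\|_{TV}\le \frac{3|\sigma^2-\tilde\sigma^2|}{2(\sigma^2\vee\tilde\sigma^2)}+\frac{|m-\tilde m|}{2(\sigma\vee\tilde\sigma)}.
\]

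The main obstacle is getting the variance-comparison term with the stated constant $3/2$ and with $\sigma^2\vee\tilde\sigma^2$ (rather than $\sigma^2\wedge\tilde\sigma^2$) in the denominator; the naive Pinsker bound tends to produce $\sigma^2\wedge\tilde\sigma^2$, which is the wrong direction, so I would instead work directly with the overlap integral $\int \min(f_{\sigma},f_{\tilde\sigma})$ for two centred Gaussians, or appeal to a known sharp estimate, to extract the $\vee$ in the denominator. An alternative, possibly cleaner, route that avoids the continuous reduction entirely is to couple $W$ and $\tilde W$ directly: first couple the two underlying normals optimally, incurring cost $\|\mathrm{N}(m,\sigma^2)-\mathrm{N}(\tilde m,\tilde\sigma^2)\|_{TV}$, then round both; since rounding is a deterministic function this coupling descends to a coupling of $W$ and $\tilde W$, and $\|\L_W-\L_{\tilde W}\|_{TV}\le\mathbb{P}(\text{the coupled normals round differently})\le\|\mathrm{N}(m,\sigma^2)-\mathrm{N}(\tilde m,\tilde\sigma^2)\|_{TV}$. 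Either way, once the problem is reduced to two continuous Gaussians the remaining work is a standard (if slightly fiddly) estimate, and I would present it with the explicit constants traced through.
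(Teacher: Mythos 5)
Your reduction from discretised normals to continuous normals is exactly the paper's first step: the paper writes the total variation distance as a sum of integrals over the intervals $[n-\tfrac12,n+\tfrac12]$ and moves the absolute value inside each integral, which is the same rounding/data-processing argument you describe. Where the two part ways is in how the continuous Gaussian bound is then obtained. The paper does no further work at this point: it simply invokes \cite[Theorem 1.3]{devroye22}, which states verbatim that
\[
    || N( m, \sigma^2 ) - N( \tilde{m}, \tilde{\sigma}^2 ) ||_{TV}
    \leq \frac{3| \sigma^2 - \tilde{\sigma}^2 |}{2(\sigma^2 \vee \tilde{\sigma}^2)} + \frac{| m - \tilde{m} |}{2(\sigma \vee \tilde{\sigma})}.
\]
Your proposed self-contained derivation of this inequality does not go through as sketched, for reasons you partly anticipate. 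First, your remark that putting the \emph{smaller} of $\sigma,\tilde{\sigma}$ in the denominator of the mean term ``only weakens the bound but matches the claimed form'' is backwards: the claimed form has $\sigma\vee\tilde{\sigma}$, the \emph{larger} standard deviation, in the denominator, so it is the \emph{stronger} statement and cannot be reached by weakening a bound that has $\sigma\wedge\tilde{\sigma}$ there. (This can be repaired by performing the mean shift at the larger of the two variances, but that choice has to be made explicitly.) Second, for the variance term, Pinsker applied to the Kullback--Leibler divergence between two centred Gaussians does not yield a bound that is uniformly linear in $|\sigma^2-\tilde{\sigma}^2|/(\sigma^2\vee\tilde{\sigma}^2)$ with constant $3/2$: the divergence blows up as the variance ratio degenerates, so that route only works when the variances are comparable. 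Consequently the ``known sharp estimate'' you mention as a fallback is not optional but is the load-bearing step; with that citation in hand, your argument coincides with the paper's proof.
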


\begin{proof}
    We can find
    \begin{align*}
		|| \L_{W} - \L_{\tilde{W}} ||_{TV}
		&= \frac{1}{2} \sum_{n\in\mathbbm{Z}} \big| \P(W = n) - \P(\tilde{W} = n) \big| \\
		&= \frac{1}{2} \sum_{n\in\mathbbm{Z}} \left| \int_{n - \frac{1}{2}}^{n + \frac{1}{2}}
			\frac{\exp\Big\{ -\frac{(u - m)^2}{2\sigma^2} \Big\}}{\sqrt{2\pi \sigma^2}} du
			- \int_{n - \frac{1}{2}}^{n + \frac{1}{2}} \frac{\exp\Big\{-\frac{(u - \tilde{m})^2}
			{2\tilde{\sigma}^2} \Big\}}{\sqrt{2\pi \tilde{\sigma}^2}} du \right| \\
		&\leq \frac{1}{2} \sum_{n\in\mathbbm{Z}} \int_{n - \frac{1}{2}}^{n + \frac{1}{2}} 
            \left| \frac{\exp\Big\{ -\frac{(u - m)^2}{2\sigma^2} \Big\}}{\sqrt{2\pi \sigma^2}} - \frac{\exp\Big\{-\frac{(u - \tilde{m})^2}{2\tilde{\sigma}^2} \Big\}}{\sqrt{2\pi \tilde{\sigma}^2}} \right| du \\
		&= || N( m, \sigma^2 ) - N( \tilde{m}, \tilde{\sigma}^2 ) ||_{TV}.
    \end{align*}
    That is, the TVD between two discretised normal distributions is bounded above by the TVD between two normal distributions with the same parameters.
    The result then follows from \cite[Theorem 1.3]{devroye22}, which states that
    \[
        || N( m, \sigma^2 ) - N( \tilde{m}, \tilde{\sigma}^2 ) ||_{TV}
        \leq \frac{3| \sigma^2 - \tilde{\sigma}^2 |}{2(\sigma^2 \vee \tilde{\sigma}^2)} + \frac{| m - \tilde{m} |}{2(\sigma \vee \tilde{\sigma})}.
    \]
\end{proof}

Having completed this initial set-up, we are ready to prove Lemma \ref{lma:LinearlyDivisibleOneStepTVDBound}.

\begin{proof}[Proof of Lemma \ref{lma:LinearlyDivisibleOneStepTVDBound}.]
    Suppose that $\Z,\X\in \Pi^{(u)}$, such that their respective control functions $\phi_Z(\cdot)$ and $\phi_X(\cdot)$ are both linearly-divisible and satisfy \eqref{eqn:BoundedControlFunctionThirdMoment},
    their offspring distributions $\xi_Z$ and $\xi_X$ both have finite third moments and lattice size one, and that there exists $r < 1$ such that $|\E(Z_1|Z_0=z) - \E(X_1|X_0=z)| = O(z^{r/2})$ and $|\V(Z_1|Z_0=z) - \V(X_1|X_0=z)| = O(z^r)$.

    Since $\phi_Z(\cdot)$ is linearly-divisible, for a given $z\in\N_0$, we can decompose $\phi_Z(z) \stackrel{d}{=} \sum_{i=1}^{l_Z(z)} \tilde{\phi}_{Z,i}(z)$ for $\{ \tilde{\phi}_{Z,i}(z) \}_{i\in\N_1}$ all i.i.d., so that $\E\tilde{\phi}_Z(z) = \varepsilon_Z(z) / l_Z(z)$ and $\V(\tilde{\phi}_Z(z)) = \nu_Z^2(z) / l_Z(z)$.
    Define\[
        \tilde{\xi}_Z(z) := \sum_{i=1}^{\tilde{\phi}_Z(z)} \xi_{Z,i},
    \]
    and denote $\tilde{m}_Z(z) := \E\tilde{\xi}_Z(z)$, $\tilde{\sigma}_Z^2(z) := \V\big(\tilde{\xi}_Z(z)\big)$, and $\tilde{\gamma}_Z(z) := \E(\tilde{\xi}_Z(z) - \E\tilde{\xi}_Z(z))^3$. We find that
    \[
        \tilde{m}_Z(z) = \frac{\varepsilon_Z(z) \cdot m_Z}{z} \cdot \frac{z}{l_Z(z)}
        \quad\text{and}\quad
        \tilde{\sigma}_Z^2(z) =  \frac{\varepsilon_Z(z) \cdot \sigma_Z^2 + \nu_Z^2(z) \cdot m_Z^2}{z} \cdot \frac{z}{l_Z(z)},
    \]
    while
    \[
        \tilde{\gamma}_Z(z)
        = \frac{\varepsilon_Z(z) \cdot \gamma_Z + \iota_Z(z) \cdot m_Z^3 + \nu_Z^2(z) \cdot m_Z \cdot \sigma_Z^2}{z} \cdot \frac{z}{l_Z(z)}.
    \]

    Since $l_Z(z) = \Theta(z)$, there exist $\lambda_Z > 1$ and $M_1 \in \N_1$ such that for all $z \geq M_1$, $z / l_Z(z) \in [1/\lambda_Z, \lambda_Z] \cap \N_1$. Given \eqref{eqn:BoundedControlFunctionThirdMoment}, for $z \geq M_1$, we obtain
    \begin{equation}\label{FirstThreeMomentsUpperBoundsLittleO}
        \tilde{m}_Z(z) \leq \lambda_Z a_Z m_Z, \quad
        \tilde{\sigma}_Z^2(z) \leq \lambda_Z( a_Z \sigma_Z^2 + b_Z m_Z^2 ), \quad\text{and}\quad
        \tilde{\gamma}_Z(z) \leq \lambda_Z( a_Z \gamma_Z + c_Z m_Z^3 + b_Z m_Z \sigma_Z^2),
    \end{equation}
    where the constants $a_Z$, $b_Z$, and $c_Z$ are the constants in \eqref{eqn:BoundedControlFunctionThirdMoment} for the process $\Z$.
    In addition:
    \begin{enumerate}
        \item[(i)] Given the upper bounds for $\tilde{m}_Z(z)$, $\tilde{\sigma}_Z^2(z)$, and $\tilde{\gamma}_Z(z)$ in \eqref{FirstThreeMomentsUpperBoundsLittleO},
        it follows from Lemma \ref{lma:ThirdAbsoluteCentralMomentBound} that there exists a finite constant $R_Z > 0$ such that $\tilde{\rho}_Z(z) := \E|\tilde{\xi}_Z(z) - \E\tilde{\xi}_Z(z)|^3 \leq R_Z$ for $z \geq M_1$.

        \item[(ii)] We have that $\tilde{\sigma}_Z^2(z) \geq \frac{\varepsilon_Z(z) \cdot \sigma_Z^2}{z} \cdot \frac{z}{l_Z(z)}$ for all $z \geq 0$, and that $z / l_Z(z) \geq 1/\lambda_Z$ for $z \geq M_1$.
        In addition, since $\Z$ is supercritical, for $t_Z$ such that $1 < t_Z < \liminf_{z\to\infty} \tau_Z(z)$ there exists $M_2 \in \N_1$ such that $\varepsilon_Z(z) \cdot m_Z > t_Z \cdot z$ for all $z \geq M_2$.
        Then, for $z \geq M_3 := M_1 \vee M_2$, it follows that $\tilde{\sigma}_Z^2(z) > \frac{t_Z \sigma_Z^2}{\lambda_Z m_Z}$.

        \item[(iii)]Since $\xi_Z$ has lattice size one, there exists $x \in \N_1$ such that $\P(\xi_Z = x) > 0$ and $\P(\xi_Z = x - 1) > 0$. Hence we see that
        \begin{align*}
            ||\L_{\tilde{\xi}_Z(z)} - \L_{(\tilde{\xi}_Z(z) + 1)} ||_{TV}
            &= 1 - \sum_{j=0}^{\infty}\P\bigg( \sum_{i=1}^{\tilde{\phi}_Z(z)} \xi_{Z,i} = j \bigg) \wedge \P\bigg( \sum_{i=1}^{\tilde{\phi}_Z(z)} \xi_{Z,i} = j - 1 \bigg) \\
            &\leq 1 - \P\big(\tilde{\phi}_Z(z) \geq  1\big) \big( \P(\xi_{Z} = x) \wedge \P(\xi_{Z} = x - 1) \big).
        \end{align*}
        For all $z \geq M_3$, our assumptions guarantee that $\E\tilde{\phi}_Z(z) > t_Z / (\lambda_Z m_Z)$ and $\V(\tilde{\phi}_Z(z)) \leq \lambda_Z b_Z$. Given these bounds, $\P\big(\tilde{\phi}_Z(z) \geq 1 \big)$ is minimised if $\tilde{\phi}_Z(z)$ has a two point distribution (see \cite[Lemma 6.7]{braunsteins25}), with mass at zero and a point $y \in \R_{>0}$ satisfying
        \[
            y \cdot \P(\tilde{\phi}_Z(z) = y) = \frac{t_Z}{\lambda_Z m_Z}
            \quad\text{and}\quad
            y^2 \cdot \P(\tilde{\phi}_Z(z) = y) - \big( y \cdot \P(\tilde{\phi}_Z(z) = y) \big)^2 = \lambda_Z b_Z,
        \]
        which we can solve to see that $\P\big(\tilde{\phi}_Z(z) \geq 1 \big) > t_Z^2 / (t_Z^2 + \lambda_Z^3 m_Z^2 b_Z) > 0$ for all $z \geq M_3$.
        Hence there exists a positive constant $\eta_Z$ such that, for all $z \geq M_3$, $||\L_{\tilde{\xi}_Z(z)} - \L_{(\tilde{\xi}_Z(z) + 1)} ||_{TV} \leq \eta_Z < 1$.
        Further, for $\eta'_Z := \eta_Z \vee 3/4$, we obtain $1 + 4 (l_Z(z)-1)(1 - \eta_Z) \geq 4(1-\eta'_Z)l_Z(z)$.
    \end{enumerate}

    Inputting these results into Lemma \ref{lma:DiscretisedNormalTVDBound}, with $W_Z(z) \sim \dn\big(l_Z(z) \cdot \tilde{m}_Z(z),\; l_Z(z) \cdot \tilde{\sigma}_Z^2(z) \big)$ and $z \geq M_3$, we obtain
    \begin{align*}
        &||\L_{Z_1|Z_0=z} - \L_{W_Z(z)} ||_{TV} \\
        &\leq \sqrt{\frac{2}{\pi}} \left( \frac{3\tilde{\rho}_Z(z)}{\tilde{\sigma}_Z^2(z)} + 2 \right) \Big( 1 + 4 (l_Z(z) - 1)\left( 1 - ||\L_{\tilde{\xi}_Z(z)} - \L_{(\tilde{\xi}_Z(z) + 1)} ||_{TV} \right) \Big)^{-\frac{1}{2}} \\
        &\quad + \left( 5 + 3\sqrt{\frac{\pi}{8}} \right)\frac{\tilde{\rho}_Z(z)}{\sqrt{l_Z(z)}\tilde{\sigma}_Z^3(z)} + \frac{1}{2\sqrt{2\pi l_Z(z)}\tilde{\sigma}_Z(z)} \\
        &\leq \left( \frac{3m_Z\lambda_Z R_Z}{t_Z \sigma_Z^2} + 2 \right) \frac{\sqrt{\lambda_Z}}{\sqrt{2\pi( 1 - \eta'_Z) \cdot z}}
        + \left( 5 + 3\sqrt{\frac{\pi}{8}} \right)\frac{\lambda_Z^2 \sqrt{m_Z^3}R_Z}{\sigma_Z^3 \sqrt{t_Z^3 \cdot z}} + \frac{\lambda_Z\sqrt{m_Z}}{2\sigma_Z\sqrt{2\pi t_Z \cdot z}},
	\end{align*}
    and hence see that
    $
        ||\L_{Z_1|Z_0=z} - \L_{W_Z(z)} ||_{TV} = O(z^{-1/2}).
    $
 Repeating the same argument for $\X$, we obtain
    $
        ||\L_{X_1|X_0=z} - \L_{W_X(z)} ||_{TV} = O(z^{-1/2})
    $ with $W_X(z) \sim \dn\big(l_X(z) \cdot \tilde{m}_X(z),\; l_X(z) \cdot \tilde{\sigma}_X^2(z) \big)$.
    
    It follows from the triangle inequality that
    \begin{align*}
        &||\L_{Z_1|Z_0=z} - \L_{X_1|X_0=z}||_{TV} \\
        &\leq ||\L_{Z_1|Z_0=z} - \L_{W_Z(z)} ||_{TV} + || \L_{W_Z(z)} - \L_{W_X(z)} ||_{TV} + ||\L_{X_1|X_0=z} - \L_{W_X(z)} ||_{TV},
    \end{align*}
    so the result will follow if we can show that there exists $s>0$ such that $|| \L_{W_Z(z)} - \L_{W_X(z)} ||_{TV} = O(z^{-s})$.
Lemma \ref{lma:TVDBetweenDiscretisedNormals} allows us to bound
    \begin{align*}
        &|| \L_{W_Z(z)} - \L_{W_X(z)} ||_{TV} \\
        &\leq \frac{3| l_Z(z) \cdot \tilde{\sigma}_Z^2(z) - l_X(z) \cdot \tilde{\sigma}_X^2(z) |}{2(l_Z(z) \cdot \tilde{\sigma}_Z^2(z))}
        + \frac{| l_Z(z) \cdot \tilde{m}_Z(z) - l_X(z) \cdot \tilde{m}_X(z) |}{2\sqrt{l_Z(z) \cdot \tilde{\sigma}_Z^2(z)}} \\
        &\leq \frac{ 3\lambda_Z^2 m_Z }{ 2 t_Z \sigma_Z^2 } \cdot \frac{|\V(Z_1|Z_0=z) - \V(X_1|X_0=z)|}{z}
        + \frac{ \lambda_Z \sqrt{m_Z} }{ 2 \sigma_Z \sqrt{t_Z} } \cdot \frac{|\E(Z_1|Z_0=z) - \E(X_1|X_0=z)|}{\sqrt{z}}
    \end{align*}
    for $z \geq M_3$, and since there exists $r < 1$ such that $|\E(Z_1|Z_0=z) - \E(X_1|X_0=z)| = O(z^{r/2})$ and $|\V(Z_1|Z_0=z) - \V(X_1|X_0=z)| = O(z^r)$, we see that $|| \L_{W_Z(z)} - \L_{W_X(z)} ||_{TV} = O(z^{(r-1)/2})$, so that $|| \L_{W_Z(z)} - \L_{W_X(z)} ||_{TV} = O(z^{-s})$ for $s=(1 - r)/2$. Therefore, $||\L_{Z_1|Z_0=z} - \L_{X_1|X_0=z}||_{TV}= O(z^{-q}) $ for $q:=(1-r)/2.$
\end{proof}

\subsection{Proofs for Section \ref{sec:CBPEstimationObservedProgenitors}}

To prove Lemma \ref{lma:KnownProgenitorsOneStepTVDBound}, we first introduce the following lemma:

\begin{lemma}\label{lma:ControlFunctionsOneStepTVDBound}
    Let $\phi_Z(z)$ and $\phi_X(z)$ be two control functions satisfying \eqref{eqn:BoundedControlFunctionThirdMoment}, linearly-divisible such that \eqref{eqn:UniformlyLatticeSizeOne} is satisfied, and each with $\liminf_{z\to\infty} \nu^2(z)/z > 0$.
    Then, if there exists $r < 1$ such that
    \[
        |\varepsilon_Z(z) - \varepsilon_X(z)| = O(z^{r/2})
        \quad \text{and} \quad
        |\nu^2_Z(z) - \nu^2_X(z)| = O(z^r),
    \]
    then there exists $q > 0$ such that $|| \L_{\phi_Z(z)} - \L_{\phi_X(z)} ||_{TV} = O(z^{-q})$.
\end{lemma}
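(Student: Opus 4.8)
The plan is to follow the template of the proof of Lemma~\ref{lma:LinearlyDivisibleOneStepTVDBound}, but applied directly to the control functions instead of to the one-step offspring transitions: the role of the auxiliary variables $\tilde\xi_Z(z)$ there is now taken by the i.i.d.\ summands $\chi_i^{(z)}$ in the linear-divisibility decomposition $\phi_Z(z) \stackrel{d}{=} \sum_{i=1}^{l_Z(z)} \chi_i^{(z)}$, and the intermediate discretised normal is $W_Z(z) \sim \dn(\varepsilon_Z(z), \nu_Z^2(z))$ (and analogously for $\X$). The whole argument reduces to: (a) uniform moment control on the $\chi_i^{(z)}$; (b) Stein-type approximation of $\phi_Z(z)$ and $\phi_X(z)$ by discretised normals via Lemma~\ref{lma:DiscretisedNormalTVDBound}; (c) comparison of the two discretised normals via Lemma~\ref{lma:TVDBetweenDiscretisedNormals}; (d) a triangle inequality.

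For step~(a), fix a generic copy $\chi^{(z)}$ of the family $\{\chi_i^{(z)}\}$. It has mean $\varepsilon_Z(z)/l_Z(z)$, variance $\nu_Z^2(z)/l_Z(z)$, and third central moment $\iota_Z(z)/l_Z(z)$ (third central moments add over i.i.d.\ sums). Since $l_Z(z) = \Theta(z)$, there are $\lambda_Z > 1$ and $M_1$ with $z/l_Z(z) \in [1/\lambda_Z, \lambda_Z]$ for $z \geq M_1$; together with~\eqref{eqn:BoundedControlFunctionThirdMoment} this bounds the mean, variance and third central moment of $\chi^{(z)}$ uniformly from above, whence Lemma~\ref{lma:ThirdAbsoluteCentralMomentBound} gives a constant $R_Z$ with $\E|\chi^{(z)} - \E\chi^{(z)}|^3 \leq R_Z$ for all $z \geq M_1$. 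For the variance \emph{lower} bound we use the new hypothesis $\liminf_{z\to\infty}\nu_Z^2(z)/z > 0$: it gives $\V(\chi^{(z)}) = (\nu_Z^2(z)/z)(z/l_Z(z)) \geq c_Z' > 0$ for $z$ large (this is exactly where the proof departs from that of Lemma~\ref{lma:LinearlyDivisibleOneStepTVDBound}, where such a bound came for free from supercriticality). Finally, \eqref{eqn:UniformlyLatticeSizeOne} gives $\sum_{j}\P(\chi^{(z)} = j)\wedge\P(\chi^{(z)} = j-1) \geq \P(\chi^{(z)} = x_z+1)\wedge\P(\chi^{(z)} = x_z) \geq \eta$, so $||\L_{\chi^{(z)}} - \L_{\chi^{(z)}+1}||_{TV} \leq 1 - \eta < 1$, uniformly in $z$.

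With these uniform bounds, step~(b) is a direct application of Lemma~\ref{lma:DiscretisedNormalTVDBound} with $n = l_Z(z) = \Theta(z)$, $S_n = \phi_Z(z)$ and $W_n = W_Z(z)$: the factor $1 + 4(l_Z(z)-1)(1 - ||\L_{\chi^{(z)}} - \L_{\chi^{(z)}+1}||_{TV})$ is bounded below by a positive multiple of $l_Z(z)$, so every term on the right-hand side is $O(z^{-1/2})$, giving $||\L_{\phi_Z(z)} - \L_{W_Z(z)}||_{TV} = O(z^{-1/2})$, and likewise for $\X$. For step~(c), Lemma~\ref{lma:TVDBetweenDiscretisedNormals} gives
\[
    ||\L_{W_Z(z)} - \L_{W_X(z)}||_{TV}
    \leq \frac{3|\nu_Z^2(z) - \nu_X^2(z)|}{2(\nu_Z^2(z)\vee\nu_X^2(z))}
    + \frac{|\varepsilon_Z(z) - \varepsilon_X(z)|}{2(\nu_Z(z)\vee\nu_X(z))},
\]
and, since both $\nu_Z^2(z)$ and $\nu_X^2(z)$ are $\Theta(z)$ (bounded above and below by positive multiples of $z$), the hypotheses $|\varepsilon_Z(z) - \varepsilon_X(z)| = O(z^{r/2})$ and $|\nu_Z^2(z) - \nu_X^2(z)| = O(z^r)$ make the right-hand side $O(z^{r-1}) + O(z^{(r-1)/2}) = O(z^{(r-1)/2})$. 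Step~(d), the triangle inequality $||\L_{\phi_Z(z)} - \L_{\phi_X(z)}||_{TV} \leq ||\L_{\phi_Z(z)} - \L_{W_Z(z)}||_{TV} + ||\L_{W_Z(z)} - \L_{W_X(z)}||_{TV} + ||\L_{W_X(z)} - \L_{\phi_X(z)}||_{TV}$, then yields the claim with $q = \min\{1/2,\,(1-r)/2\} > 0$.

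I expect the only genuine work to be in step~(a): checking that all relevant moments of the summands $\chi^{(z)}$ are controlled \emph{uniformly} in $z$, in particular the variance \emph{from below} (which is why the extra hypothesis $\liminf_{z\to\infty}\nu^2(z)/z > 0$ appears in this lemma but not in Lemma~\ref{lma:LinearlyDivisibleOneStepTVDBound}) and the third absolute central moment from above. Everything past that point is a routine combination of Lemmas~\ref{lma:DiscretisedNormalTVDBound}, \ref{lma:ThirdAbsoluteCentralMomentBound} and~\ref{lma:TVDBetweenDiscretisedNormals} with the triangle inequality, following the proof of Lemma~\ref{lma:LinearlyDivisibleOneStepTVDBound} almost verbatim.
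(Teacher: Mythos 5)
Your proposal is correct and follows essentially the same route as the paper's proof: the same decomposition into the i.i.d.\ summands of the linear-divisibility representation, the same uniform moment bounds (including the variance lower bound from $\liminf_{z\to\infty}\nu^2(z)/z>0$ and the lattice condition \eqref{eqn:UniformlyLatticeSizeOne}), and the same combination of Lemmas \ref{lma:ThirdAbsoluteCentralMomentBound}, \ref{lma:DiscretisedNormalTVDBound} and \ref{lma:TVDBetweenDiscretisedNormals} via the triangle inequality. The only cosmetic difference is your exponent $q=\min\{1/2,(1-r)/2\}$ versus the paper's $q=(1-r)/2$, which is if anything slightly more careful.
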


\begin{proof}
    Let $\phi_Z(z)$ and $\phi_X(z)$ be as stated in the lemma.
    The triangle inequality allows us to form the bound
    \begin{align*}
        &|| \L_{\phi_Z(z)} - \L_{\phi_X(z)} ||_{TV} \\
        &\leq ||\L_{\phi_Z(z)} - \L_{W_{\phi_Z}(z)} ||_{TV} + || \L_{W_{\phi_Z}(z)} - \L_{W_{\phi_X}(z)} ||_{TV} + ||\L_{\phi_X(z)} - \L_{W_{\phi_X}(z)} ||_{TV},
    \end{align*}
    where $W_{\phi_Z}(z) \sim \dn\big( \varepsilon_Z(z),\; \nu^2_Z(z) \big)$ and $W_{\phi_X}(z) \sim \dn\big( \varepsilon_X(z),\; \nu^2_X(z) \big)$. \\

    Since $\phi_Z(\cdot)$ is linearly-divisible, for a given $z \in \N_0$, we can decompose $\phi_Z(z) \stackrel{d}{=} \sum_{i=1}^{l_Z(z)} \pt_{Z,i}(z)$, where $\{\pt_{Z,i}(z)\}_{i\in\N_1}$ are all i.i.d.
    We define $\tilde{\varepsilon}_Z(z) := \E\tilde{\phi}_Z(z)$, $\tilde{\nu}^2_Z(z) := \V(\tilde{\phi}_Z(z))$, and $\tilde{\iota}_Z(z) := \E(\tilde{\phi}_Z(z) - \E\tilde{\phi}_Z(z))^3$, so that
    \[
        \tilde{\varepsilon}_Z(z) = \frac{\varepsilon_Z(z)}{z} \cdot \frac{z}{l_Z(z)},
        \quad
        \tilde{\nu}^2_Z(z) = \frac{\nu^2_Z(z)}{z} \cdot \frac{z}{l_Z(z)},
        \quad\text{and}\quad
        \tilde{\iota}_Z(z) = \frac{\iota_Z(z)}{z} \cdot \frac{z}{l_Z(z)}.
    \]
    Since $l_Z(z) = \Theta(z)$, there exist $\lambda_Z > 1$ and $M_1 \in \N_1$ such that for all $z \geq M_1$, $z/l_Z(z) \in [1/\lambda_Z,\, \lambda_Z] \cap \N_1$.
    Since we assume that $\phi_Z(\cdot)$ satisfies \eqref{eqn:BoundedControlFunctionThirdMoment}, we can see that
    \begin{equation}\label{eqn:ControlFirstThreeMomentsUpperBounds}
        \tilde{\varepsilon}_Z(z) \leq a_Z \lambda_Z, \quad
        \tilde{\nu}_Z^2(z) \leq b_Z \lambda_Z, \quad\text{and}\quad
        \tilde{\iota}_Z(z) \leq c_Z \lambda_Z
    \end{equation}
    for $z \geq M_1$.
    We can then note that
    \begin{enumerate}
        \item[(i)] Given the upper bounds in \eqref{eqn:ControlFirstThreeMomentsUpperBounds},
        it follows from Lemma \ref{lma:ThirdAbsoluteCentralMomentBound} that there exists a finite constant $R_X > 0$ such that $\tilde{\rho}_Z(z) := \E|\tilde{\phi}_Z(z) - \E\tilde{\phi}_Z(z)|^3 \leq R_Z$ for $z \geq M_1$.
    
        \item[(ii)] By assumption, $\liminf_{z\to\infty} \nu^2_Z(z)/z > 0$. Hence there exists $t_Z > 0$ and $M_2 \in \N_1$ such that for all $z \geq M_2$, $\nu^2_Z(z) > t_Z z$. Then, since $l_Z(z) < \lambda_Z z$ for $z \geq M_1$, for $z \geq M_3 := M_1 \vee M_2$, $\tilde{\nu}_Z^2(z) > t_Z / \lambda_Z$.
    
        \item[(iii)] Because the $\pt_{Z,i}(z)$'s satisfy \eqref{eqn:UniformlyLatticeSizeOne}, we have $|| \L_{\pt_Z(z)} - \L_{(\pt_Z(z)+1)} ||_{TV} \leq 1 - \eta_Z$.
    \end{enumerate}
    Then, by Lemma \ref{lma:DiscretisedNormalTVDBound},
    \begin{align*}
        &||\L_{\phi_Z(z)} - \L_{W_{\phi_Z}(z)} ||_{TV} \\
        &\leq \sqrt{\frac{2}{\pi}} \left( \frac{3\lambda_Z R_Z}{ t_Z} + 2 \right) \frac{1}{\sqrt{1 + 4\eta_Z (z/\lambda_Z - 1)}}
        + \left( 5 + 3\sqrt{\frac{\pi}{8}} \right)\frac{\sqrt{\lambda_Z^3} R_Z}{\sqrt{t_Z^3 \cdot z}} + \frac{\sqrt{\lambda_Z}}{2 \sqrt{2\pi t_Z \cdot z}}
    \end{align*}
    for $z \geq M_3$, so that we see $||\L_{\phi_Z(z)} - \L_{W_{\phi_Z}(z)} ||_{TV} = O(z^{-1/2})$.
    Analogously, we can find $||\L_{\phi_X(z)} - \L_{W_{\phi_X}(z)} ||_{TV} = O(z^{-1/2})$.

    It remains to bound $|| \L_{W_{\phi_Z}(z)} - \L_{W_{\phi_X}(z)} ||_{TV}$. By Lemma \ref{lma:TVDBetweenDiscretisedNormals}, and since $\nu^2_Z(z) > t_Z z$ for all $z \geq M_2$, we see that
    \begin{align*}
        || \L_{W_{\phi_Z}(z)} - \L_{W_{\phi_X}(z)} ||_{TV}
        &\leq \frac{3 | \nu^2_Z(z) - \nu^2_X(z) |}{2 t_Z \cdot z}
        + \frac{| \varepsilon_Z(z) - \varepsilon_X(z) |}{2\sqrt{t_Z \cdot z}}
    \end{align*}
    for $z \geq M_2$.
    Since $|\varepsilon_Z(z) - \varepsilon_X(z)| = O(z^{r/2})$ and $ |\nu^2_Z(z) - \nu^2_X(z)| = O(z^r)$, we see that $|| \L_{W_{\phi_Z}(z)} - \L_{W_{\phi_X}(z)} ||_{TV} = O(z^{(r-1)/2})$.
    Hence $|| \L_{\phi_Z(z)} - \L_{\phi_X(z)} ||_{TV} = O(z^{-q})$, for $q := (1-r)/2$.
\end{proof}

\begin{proof}[Proof of Lemma \ref{lma:KnownProgenitorsOneStepTVDBound}.]
    Let $\Z$ and $\X$ be two supercritical CBPs with control functions satisfying \eqref{eqn:BoundedControlFunctionThirdMoment} and with $\liminf_{z\to\infty} \nu^2(z)/z > 0$, that are linearly-divisible into random variables satisfying \eqref{eqn:UniformlyLatticeSizeOne}, and with offspring distributions having finite third moments and lattice size one.
    Assume that $m_Z = m_X$ and $\sigma^2_Z = \sigma^2_X$, and there exists $r < 1$ such that $|\varepsilon_Z(z) - \varepsilon_X(z)| = O(z^{r/2})$ and $|\nu^2_Z(z) - \nu^2_X(z)| = O(z^r)$. \\

    Since $m_Z = m_X$, $\sigma^2_Z = \sigma^2_X$, and $\xi_Z$ and $\xi_X$ both have finite third moments and lattice size one, it follows from \cite[Theorem 9]{petrov64} that, for a given $u\in\N_1$, there exists a constant $c$ depending on $\xi_Z$ and $\xi_X$ such that
    \[
        || \L_{Z_1 | \phi_Z(Z_0)=u} - \L_{X_1 | \phi_X(X_0)=u} ||_{TV}
        = || \L_{\sum_{i=1}^u \xi_{Z,i}} - \L_{\sum_{i=1}^u \xi_{X,i}} ||_{TV}
        \leq \frac{c}{\sqrt{u}}.
    \]
    Then, since $\{ \phi_Z(Z_0),\, Z_1,\, \phi_Z(Z_1),\, Z_2,\, \dots \}$ and $\{ \phi_X(X_0),\, X_1,\, \phi_X(X_1),\, X_2,\, \dots \}$ both form time-inhomogeneous Markov chains,
    we will have from Lemma \ref{lma:TVDDecreasingBound} that, for any $N\in\N_0$,
    \begin{align*}
        &|| \L_{(\phi_Z(Z_0),\, Z_1)|Z_0=z_0} - \L_{(\phi_X(X_0),\, X_1)|X_0=z_0}||_{TV} \\
        &\leq || \L_{\phi_Z(z_0)} - \L_{\phi_X(z_0)} ||_{TV} + \P(\phi_Z(z_0) \leq N) + \frac{c}{\sqrt{N+1}}.
    \end{align*}
    Then, taking $N := \lfloor \alpha \cdot \varepsilon(u) \rfloor$ for $\alpha \in (0, 1)$, we can use Chebyshev's inequality to further bound
    \begin{align*}
        &|| \L_{(\phi_Z(Z_0),\, Z_1)|Z_0=z_0} - \L_{(\phi_X(X_0),\, X_1)|X_0=z_0}||_{TV} \\
        &\leq || \L_{\phi_Z(z_0)} - \L_{\phi_X(z_0)} ||_{TV} + \frac{\nu^2_Z(z_0)}{\big( \varepsilon_Z(z_0) - \lfloor \alpha \cdot \varepsilon_Z(z_0) \rfloor \big)^2} + \frac{c}{\sqrt{\lfloor \alpha \cdot \varepsilon_Z(z) \rfloor + 1}} \\
        &\leq || \L_{\phi_Z(z_0)} - \L_{\phi_X(z_0)} ||_{TV} + \frac{\nu^2_Z(z_0)}{(1-\alpha)^2 \varepsilon_Z^2(z_0)} + \frac{c}{\sqrt{\alpha \cdot \varepsilon_Z(z_0)}}.
    \end{align*}
    Under assumption \eqref{eqn:BoundedControlFunctionThirdMoment} there exists a constant $b$ such that $\nu^2_Z(z) \leq bz$ for all $z \in \N_1$, while it follows from the assumption of supercriticality that there exists $M>0$ such that $\varepsilon_Z(z) > m_Z \cdot z$ for all $z > M$.
    Hence for $z > M$,
    \begin{align*}
        &|| \L_{(\phi_Z(Z_0),\, Z_1)|Z_0=z_0} - \L_{(\phi_X(X_0),\, X_1)|X_0=z_0}||_{TV} \\
        &\leq || \L_{\phi_Z(z_0)} - \L_{\phi_X(z_0)} ||_{TV} + \frac{b}{(1-\alpha)^2 m_Z^2 \cdot z_0} + \frac{c}{\sqrt{\alpha m_Z \cdot z_0}}.
    \end{align*}
    From Lemma \ref{lma:ControlFunctionsOneStepTVDBound}, we know that there exists $\tilde{q} > 0$ such that $|| \L_{\phi_Z(z_0)} - \L_{\phi_X(z_0)} ||_{TV}  = O(z_0^{-\tilde{q}})$,
    so that, for $q := \tilde{q} \wedge 1/2$, $|| \L_{(\phi_Z(Z_0),\, Z_1)|Z_0=z_0} - \L_{(\phi_X(X_0),\, X_1)|X_0=z_0}||_{TV} = O(z_0^{-q})$.
\end{proof}

\begin{proof}[Proof of Theorem \ref{thm:ObservedProgenitorsConsistentEstimators}]
    Given there exist two CBPs $\Z, \X \in \Pi^{(p)}$ that satisfy \eqref{eqn:KnownProgenitorsOneStepTVDBoundConditions}, by Lemma \ref{lma:KnownProgenitorsOneStepTVDBound} there exists $q > 0$ such that $|| \L_{(\phi_Z(Z_0),\, Z_1)|Z_0=z_0} - \L_{(\phi_X(X_0),\, X_1)|X_0=z_0}||_{TV} = O(z_0^{-q})$ (that is, equation \eqref{eqn:KnownProgenitorsOneStepTVDBound} applies). Then, to prove Theorem \ref{thm:ObservedProgenitorsConsistentEstimators}, we require equivalents to Lemma \ref{lma:CBPTVDLimitApproachesZero} and Proposition \ref{ppn:CBPNoConsistentEstimation} that apply under our extended observation scheme.
    The equivalent results are direct: \eqref{eqn:KnownProgenitorsOneStepTVDBound} implies that
    \begin{equation}\label{eqn:KnownProgentiorsTVDApproachsZero}
        \lim_{z_0\to\infty} || \L_{\{ Z_0,\,\phi(Z_0),\,Z_1,\,\phi(Z_1),\dots | Z_0 = z_0 \}} - \L_{\{ X_0,\,\phi(X_0),\,X_1,\,\phi(X_1),\dots |X_0 = z_0 \}} ||_{TV} = 0,
    \end{equation}
    which can be seen by altering the proof of Lemma \ref{lma:CBPTVDLimitApproachesZero} by replacing $Z_j$, for $j \in \N_1$, by $( \phi_Z(Z_{j-1}),\, Z_j )$ and $X_j$ by $( \phi_X(X_{j-1}),\, X_k )$ in each total variation distance.
    Similarly, we can alter Proposition \ref{ppn:CBPNoConsistentEstimation} to use \eqref{eqn:KnownProgentiorsTVDApproachsZero} rather than Lemma \ref{lma:CBPTVDLimitApproachesZero} in (i), from which the result follows.
\end{proof}

\section{Proofs of consistency results}

We detail here the proofs of Theorems \ref{thm:KnownPhiConsistentEstimators}, \ref{thm:LinearMeanVarPhiConsistentEstimators}, and \ref{thm:ObservedProgenitorsConsistentEstimators}.
These three proofs rely on a number of initial results which we state below. The first of these is the following classical convergence result for martingale difference sequences:

\begin{theorem}\label{thm:SLLNforMDS}
    Let $\{U_n\}_{n\in\N_0}$ be a martingale difference sequence adapted to a filtration $\{\F_n\}_{n\in\N_0}$ (i.e.\ $\E(U_n | \F_{n-1}) = 0$ and $\E|U_n| < \infty$), and let $\{ J_n \}_{n\in\N_0}$ be a non-decreasing sequence of positive random variables such that each $J_n$ is $\F_{n-1}$-measurable. If
    \[
        \lim_{n\to\infty} J_n = \infty \;\;\text{a.s.}
        \quad\text{and}\quad
        \sum_{n=1}^{\infty} J_n^{-2} \E(U_n^2 | \F_{n-1}) < \infty \;\;\text{a.s.},
    \]
    then $J_n^{-1} \sum_{k=1}^n U_k \stackrel{a.s.}{\longrightarrow} 0$ as $n \to \infty$.
\end{theorem}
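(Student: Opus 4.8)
The plan is to recognise $J_n^{-1}\sum_{k=1}^n U_k$ as a Kronecker average of the partial sums of the series $\sum_k U_k/J_k$, to prove that this series converges almost surely using the martingale structure, and then to invoke Kronecker's lemma. First I would set $M_0:=0$ and, for $n\ge 1$, $M_n:=\sum_{k=1}^n J_k^{-1}U_k$. Since each $J_k$ is positive and $\F_{k-1}$-measurable while $\E(U_k\,|\,\F_{k-1})=0$ and $\E|U_k|<\infty$, the increments satisfy $\E(M_k-M_{k-1}\,|\,\F_{k-1})=J_k^{-1}\E(U_k\,|\,\F_{k-1})=0$, so $\{M_n\}$ is a martingale adapted to $\{\F_n\}$, with predictable quadratic variation
\[
    \langle M\rangle_n:=\sum_{k=1}^n\E\big((M_k-M_{k-1})^2\,\big|\,\F_{k-1}\big)=\sum_{k=1}^n J_k^{-2}\,\E(U_k^2\,|\,\F_{k-1}),
\]
which is non-decreasing, $\F_{n-1}$-measurable, and, by the second hypothesis, satisfies $\langle M\rangle_\infty<\infty$ almost surely.

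Next I would show that $M_n$ converges almost surely on $\{\langle M\rangle_\infty<\infty\}$ by a localisation argument. For each $c>0$, set $T_c:=\inf\{n\ge 0:\langle M\rangle_{n+1}>c\}$ (with $\inf\emptyset=\infty$); since $\langle M\rangle_{n+1}$ is $\F_n$-measurable, $T_c$ is a stopping time, and on $\{T_c\ge k\}$ one has $\langle M\rangle_k\le c$. The stopped process $M^{T_c}_n:=M_{n\wedge T_c}$ is a martingale whose increments $D_k:=J_k^{-1}U_k\,\1_{\{T_c\ge k\}}$ satisfy, using that $\1_{\{T_c\ge k\}}$ and $J_k$ are $\F_{k-1}$-measurable,
\[
    \E(D_k^2\,|\,\F_{k-1})=\1_{\{T_c\ge k\}}\,J_k^{-2}\,\E(U_k^2\,|\,\F_{k-1})\le c\,\1_{\{T_c\ge k\}},
\]
so that $\E(D_k^2)\le c<\infty$ and, summing the orthogonal increments, $\E\big((M^{T_c}_n)^2\big)=\E\big(\langle M\rangle_{n\wedge T_c}\big)\le c$ for all $n$. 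Hence $M^{T_c}$ is bounded in $L^2$ and converges almost surely by the $L^2$ martingale convergence theorem (see, e.g., \cite{durrett19}). On $\{T_c=\infty\}$ we have $M_n=M^{T_c}_n$ for every $n$, so $M_n$ converges there; and since $\{\langle M\rangle_\infty<\infty\}=\bigcup_{c\in\N}\{\langle M\rangle_\infty<c\}\subseteq\bigcup_{c\in\N}\{T_c=\infty\}$, the martingale $M_n$ converges almost surely to a finite limit.

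Finally, on the full-probability event where $0<J_1\le J_2\le\cdots$, $J_n\to\infty$, and $\sum_{k\ge 1}U_k/J_k=\lim_n M_n$ exists and is finite, Kronecker's lemma (applied pathwise with $a_k=U_k/J_k$ and weights $b_k=J_k$) yields $J_n^{-1}\sum_{k=1}^n U_k\to 0$; this is the desired conclusion.

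I expect the main obstacle to be the localisation step: the hypothesis $\sum_n J_n^{-2}\E(U_n^2\,|\,\F_{n-1})<\infty$ holds only almost surely and does not force the $U_k$ to be square-integrable, so the $L^2$ martingale convergence theorem cannot be applied to $M_n$ itself; the stopping times $T_c$ are precisely what convert the almost surely finite predictable quadratic variation into a genuine uniform $L^2$ bound on the stopped martingales. The remaining ingredients — the $L^2$ martingale convergence theorem and Kronecker's lemma — are classical and can simply be cited.
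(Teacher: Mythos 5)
Your proposal is correct and complete. The paper does not prove this statement itself but simply cites \cite[Theorem 2.18]{hall80}; your argument --- converting the a.s.\ finite predictable quadratic variation into uniform $L^2$ bounds on the stopped martingales $M^{T_c}$, deducing a.s.\ convergence of $\sum_k U_k/J_k$, and finishing with a pathwise application of Kronecker's lemma --- is precisely the classical proof of that cited result, so there is nothing to add.
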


A more general version of this result, alongside its proof, can be found in \cite[Theorem 2.18]{hall80}. We will often use Theorem \ref{thm:SLLNforMDS} in the special case where $J_n = n$ a.s., in which case the theorem simplifies to the following:
\begin{corollary}\label{crly:SLLNforMDS}
    Let $\{U_n\}_{n\in\N_0}$ be a martingale difference sequence adapted to a filtration $\{\F_n\}_{n\in\N_0}$, such that $\sum_{n=1}^\infty \frac{1}{n^2} \E(U^2_n | \F_{n-1}) < \infty$ a.s.. Then $\frac{1}{n} \sum_{k=1}^n U_k \stackrel{a.s.}{\longrightarrow} 0$ as $n \to \infty$.
\end{corollary}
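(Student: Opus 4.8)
The plan is to obtain this statement as an immediate special case of Theorem \ref{thm:SLLNforMDS} by taking the auxiliary sequence $\{J_n\}$ to be deterministic. First I would set $J_n := n$ for each $n \in \N_1$. Since $J_n$ is a constant, it is trivially $\F_{n-1}$-measurable; moreover $\{J_n\}$ is non-decreasing, strictly positive, and $\lim_{n\to\infty} J_n = \infty$ surely, hence almost surely. Thus the first hypothesis of Theorem \ref{thm:SLLNforMDS} is satisfied with no work.

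Next I would verify the second hypothesis. With $J_n = n$ we have
\[
    \sum_{n=1}^{\infty} J_n^{-2}\,\E(U_n^2 \mid \F_{n-1}) = \sum_{n=1}^{\infty} \frac{1}{n^2}\,\E(U_n^2 \mid \F_{n-1}),
\]
which is finite almost surely precisely by the assumption of the corollary. The martingale-difference property of $\{U_n\}$ relative to $\{\F_n\}$ is carried over verbatim as the corresponding hypothesis in Theorem \ref{thm:SLLNforMDS}. Applying that theorem then gives $J_n^{-1}\sum_{k=1}^n U_k = \tfrac{1}{n}\sum_{k=1}^n U_k \stackrel{a.s.}{\longrightarrow} 0$ as $n\to\infty$, which is exactly the claimed conclusion.

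There is essentially no obstacle here: the only point worth stating explicitly is that a deterministic sequence is a legitimate choice of the $\F_{n-1}$-measurable random variables $J_n$ appearing in Theorem \ref{thm:SLLNforMDS}, so that the corollary follows directly without any additional argument.
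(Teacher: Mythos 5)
Your proposal is correct and matches the paper's treatment exactly: the corollary is obtained from Theorem \ref{thm:SLLNforMDS} by the deterministic choice $J_n = n$, whose hypotheses are trivially verified as you describe (the paper also notes this special case appears as a standalone theorem in Feller). Nothing further is needed.
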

This special case is proven as a theorem in its own right in \cite[Section VII.9, Theorem 3]{feller71}. In addition to these classical results, we will also make use of Lemma \ref{lma:NormedSumBoundedInProbability}, which we prove with the assistance of Lemma \ref{lma:DeterministicNormedSumBound} below.

\begin{lemma}\label{lma:DeterministicNormedSumBound}
    Let $\{z_k\}_{k\in\N_0}$ be a sequence of non-negative integers satisfying $z_k \stackrel{k \to \infty}{\longrightarrow} \infty$. Given $M > 0$, define $K_M := \min_{k \in \N_0} \{ k : z_k > M\}$. If $z_k \geq s \cdot z_{k-1}$ for some $s > 1$ and all $k \geq K_M$, then for all $n > K_M$,
    \[
        \sum_{k=1}^n \frac{z_{k-1}}{z_{n-1}} < \frac{K_M}{s^{n - K_M - 1}} + \frac{1}{1 - s^{-1}}.
    \]
\end{lemma}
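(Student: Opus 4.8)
## Proof Proposal for Lemma 5.13

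The plan is to split the sum $\sum_{k=1}^n z_{k-1}/z_{n-1}$ at the index $K_M$ and bound the two pieces separately, since the hypothesis $z_k \ge s z_{k-1}$ is only available for $k \ge K_M$.

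First I would handle the tail of the sum, i.e.\ the terms with index $k$ running roughly from $K_M$ to $n$. For these indices, iterating the inequality $z_k \ge s z_{k-1}$ gives $z_{n-1} \ge s^{\,n-1-(k-1)} z_{k-1} = s^{\,n-k} z_{k-1}$ whenever $k-1 \ge K_M$, hence $z_{k-1}/z_{n-1} \le s^{-(n-k)}$. Summing a geometric series,
\[
    \sum_{k=K_M+2}^{n} \frac{z_{k-1}}{z_{n-1}} \le \sum_{k=K_M+2}^{n} s^{-(n-k)} = \sum_{j=0}^{n-K_M-2} s^{-j} < \frac{1}{1 - s^{-1}},
\]
using $s>1$ so the infinite geometric series converges. (One must be slightly careful about exactly which indices satisfy $k-1 \ge K_M$; the clean cutoff is to apply the geometric bound for $k-1 \ge K_M$ and absorb the remaining finitely many small-index terms into the first piece.)

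Next I would bound the head of the sum, the terms with $k-1 < K_M$ (equivalently $k \le K_M$). Since $z_k \to \infty$, the terms $z_0, z_1, \dots, z_{K_M-1}$ are all at most $M < z_{K_M}$ by definition of $K_M$ (these are the indices \emph{before} the threshold is crossed), so each of these at most $K_M$ numerators is bounded by $z_{K_M}$. For the denominator, applying the growth inequality from index $K_M+1$ up to $n-1$ gives $z_{n-1} \ge s^{\,n-1-K_M} z_{K_M}$, so each head term satisfies $z_{k-1}/z_{n-1} \le z_{K_M}/(s^{\,n-1-K_M} z_{K_M}) = s^{-(n-1-K_M)}$. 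Summing at most $K_M$ such terms yields
\[
    \sum_{k : k-1 < K_M} \frac{z_{k-1}}{z_{n-1}} \le \frac{K_M}{s^{\,n-K_M-1}}.
\]
Adding the two bounds gives the claimed inequality $\sum_{k=1}^n z_{k-1}/z_{n-1} < K_M/s^{\,n-K_M-1} + 1/(1-s^{-1})$ for all $n > K_M$.

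The main obstacle is purely bookkeeping: making sure the index ranges in the two pieces partition $\{1,\dots,n\}$ correctly and that the geometric-series exponents are the right ones (e.g.\ whether a term picks up $s^{-(n-k)}$ or $s^{-(n-1-k)}$, and handling the boundary index $k = K_M+1$ consistently). There is no analytic difficulty — only the need to state precisely for which $k$ the hypothesis $z_k \ge s z_{k-1}$ may be invoked — so the proof is essentially a careful two-part estimate followed by summing a geometric series.
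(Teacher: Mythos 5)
Your proof is correct and follows essentially the same route as the paper's: split the sum at $K_M$, bound the head by $K_M \cdot z_{K_M}/z_{n-1} \le K_M\, s^{-(n-K_M-1)}$ using $z_{k-1} \le M < z_{K_M}$, and bound the tail by the geometric series $\sum_{j \ge 0} s^{-j} < (1-s^{-1})^{-1}$. The one indexing slip is that your tail sum starts at $k = K_M+2$ while your head stops at $k = K_M$, so the term $k = K_M+1$ is covered by neither piece; since that term has $k-1 = K_M$ it also satisfies the geometric bound $z_{k-1}/z_{n-1} \le s^{-(n-k)}$, so starting the tail at $k = K_M+1$ (as the paper does) absorbs it without changing either estimate.
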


\begin{proof}
    Given a sequence satisfying the conditions of the lemma, let $n > K_M$. Then
    \begin{align*}
        \sum_{k=1}^n \frac{z_{k-1}}{z_{n-1}}
        &= \sum_{k=1}^{K_M} \frac{z_{k-1}}{z_{K_M}} \cdot \frac{z_{K_M}}{z_{n-1}} + \sum_{k=K_M + 1}^n \frac{z_{k-1}}{z_{n-1}} \\
        &\leq \sum_{k=1}^{K_M} \frac{M}{z_{K_M}} \cdot \frac{z_{K_M}}{z_{n-1}} + \sum_{k=K_M + 1}^n \frac{z_{k-1}}{z_{n-1}} \\
        &< K_M \cdot \frac{z_{K_M}}{z_{n-1}} + \sum_{k=K_M + 1}^n \frac{z_{k-1}}{z_{n-1}},
    \end{align*}
    where, since $\frac{z_j}{z_{n-1}} \leq \frac{1}{s^{n-j-1}}$ for $j \geq K_M$,
    \[
        K_M \cdot \frac{z_{K_M}}{z_{n-1}} \leq \frac{K_M}{s^{n - K_M - 1}}.
    \]
    In addition,
    \[
        \sum_{k=K_M + 1}^n \frac{z_{k-1}}{z_{n-1}}
        \leq \sum_{k=K_M + 1}^n \frac{1}{s^{n-k}}
        = \sum_{k=0}^{n-K_M-1} \frac{1}{s^k}
        < \frac{1}{1 - s^{-1}}.
    \]
\end{proof}

\begin{lemma}\label{lma:NormedSumBoundedInProbability}
    Let $\Z$ be a supercritical CBP satisfying \eqref{eqn:BoundedControlFunctionMoments}, and with $\sigma^2$ finite.
    Let $s$ be such that $1 < s < \liminf_{z\to\infty}\tau(z)$.
    Then, on the event $\{Z_n \to \infty\}$,
    \[
        \lim_{n\to\infty} \P\bigg(
            \sum_{k=1}^n \frac{Z_{k-1}}{Z_{n-1}} \leq \frac{1}{1 - s^{-1}}
        \bigg) = 1.
    \]
\end{lemma}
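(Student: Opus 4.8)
The plan is to reduce the probabilistic statement to the deterministic bound of Lemma~\ref{lma:DeterministicNormedSumBound} by working on the event where the trajectory of population sizes eventually grows at least geometrically with ratio $s$. First I would fix $s$ with $1 < s < \liminf_{z\to\infty}\tau(z)$ and, using the definition of $\tau$, choose $M>0$ such that $\E(Z_1\mid Z_0 = z) = \tau(z)\, z > s\, z$ for all $z > M$. The key observation is that, conditional on $\{Z_n\to\infty\}$, the population size $Z_{k}$ exceeds $M$ for all $k$ past some (random) index, so for the tail of the trajectory the conditional means satisfy $\E(Z_k\mid Z_{k-1}) > s\, Z_{k-1}$. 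This is the natural analogue of the supercriticality bound $\varepsilon(z)\, m > t\, z$ used in the proof of Lemma~\ref{lma:CBPTVDLimitApproachesZero}.

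Next I would make precise the passage from ``mean grows by factor $s$'' to ``the realised path grows by factor $s'$ for some $1 < s' < s$.'' Since $\V(Z_k\mid Z_{k-1}) = \varepsilon(Z_{k-1})\sigma^2 + \nu^2(Z_{k-1})m^2 = O(Z_{k-1})$ by \eqref{eqn:BoundedControlFunctionMoments}, while the mean is of order $Z_{k-1}$, a Chebyshev / Borel–Cantelli argument shows that $Z_k / Z_{k-1} \to \liminf$-type behaviour can be controlled: more concretely, on $\{Z_n\to\infty\}$ one has $Z_k \geq s'\, Z_{k-1}$ for all sufficiently large $k$, almost surely, for any fixed $s' \in (1, s)$. (One way: the events $\{Z_k < s' Z_{k-1}\} \cap \{Z_{k-1} > M'\}$ have conditional probability $O(Z_{k-1}^{-1})$ by Chebyshev, hence summable along the trajectory since $Z_{k-1}$ grows geometrically, so Borel–Cantelli applies.) Given this, define $K$ to be the last index at which the geometric lower bound fails (finite a.s.\ on $\{Z_n\to\infty\}$); then Lemma~\ref{lma:DeterministicNormedSumBound} applied pathwise with this $K$ and ratio $s'$ gives
\[
    \sum_{k=1}^n \frac{Z_{k-1}}{Z_{n-1}} < \frac{K}{(s')^{\,n-K-1}} + \frac{1}{1 - (s')^{-1}}
\]
for all $n > K$, and the first term tends to $0$ as $n\to\infty$ since $K$ is a.s.\ finite.

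Finally I would assemble these pieces: on $\{Z_n\to\infty\}$, almost surely $\limsup_{n\to\infty}\sum_{k=1}^n Z_{k-1}/Z_{n-1} \leq 1/(1-(s')^{-1})$, and letting $s'\uparrow s$ (the bound is monotone in $s'$) yields $\limsup_n \sum_{k=1}^n Z_{k-1}/Z_{n-1} \leq 1/(1-s^{-1})$ a.s.\ on $\{Z_n\to\infty\}$. Since almost sure convergence of this limsup to something $\leq 1/(1-s^{-1})$ implies the corresponding probability statement, we get $\P(\sum_{k=1}^n Z_{k-1}/Z_{n-1} \leq 1/(1-s^{-1})) \to 1$, which is the claim. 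The main obstacle I anticipate is the middle step — rigorously upgrading the mean-growth inequality to an almost-sure pathwise geometric lower bound on $\{Z_n\to\infty\}$; this requires care because one is conditioning on a tail event and the increments are not independent, so the Chebyshev-plus-Borel–Cantelli argument must be set up along the random trajectory (e.g.\ by first restricting to $\{Z_n\to\infty, Z_k > M'\ \forall k\geq k_0\}$ for each fixed $k_0$ and taking a union) rather than naively.
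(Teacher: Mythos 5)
Your overall architecture matches the paper's: establish that the trajectory eventually grows geometrically with ratio (essentially) $s$, then invoke Lemma \ref{lma:DeterministicNormedSumBound} pathwise and kill the $K/s^{n-K-1}$ error term. The first step (choosing $t\in(s,\liminf_z\tau(z))$ and a level beyond which the conditional mean exceeds $t z$) and the last step are fine. The problem is the middle step, which is where all the content of the lemma lives, and your proposed mechanism for it does not close. You want to apply Borel--Cantelli to the events $\{Z_k < s' Z_{k-1}\}$ using the Chebyshev bound $\P(Z_k < s'Z_{k-1}\mid \F_{k-1}) = O(Z_{k-1}^{-1})$, and you justify summability ``since $Z_{k-1}$ grows geometrically'' --- but geometric growth of the trajectory is precisely the conclusion you are trying to reach, so the argument is circular. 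The repair you suggest (restricting to $\{Z_n\to\infty,\ Z_k>M'\ \forall k\geq k_0\}$ and taking a union over $k_0$) does not fix this: knowing $Z_k>M'$ for all large $k$, or even $Z_k\to\infty$, does not give $\sum_k 1/Z_{k-1}<\infty$, which is what the (conditional) Borel--Cantelli lemma needs. So as written the key step would fail.

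The paper resolves this differently, and the device is worth noting. It proves the \emph{in-probability} statement $\lim_{M\to\infty}\P(Z_k> s Z_{k-1}\ \forall k\in\N_1\mid Z_0>M)=1$ by building the geometric-growth event one step at a time: conditioning successively on $\{Z_j>sZ_{j-1},\ j\leq k-1\}$ forces $Z_{k-1}>s^{k-1}M$ \emph{on the event being constructed}, so the one-step Chebyshev failure probabilities are at most $A_{t,s}/(s^{k-1}M)$ and the telescoping product $\prod_k\big(1-A_{t,s}/(s^{k-1}M)\big)$ converges and tends to $1$ as $M\to\infty$. (The geometric lower bound on $Z_{k-1}$ is thus obtained by construction, not assumed.) It then transfers this to the running trajectory via the strong Markov property at the random index $K^{(n)}:=\argmax_{k<\log n}Z_k$, whose deterministic bound $K^{(n)}\leq\log n$ makes $K^{(n)}/s^{\,n-K^{(n)}-1}\to 0$. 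If you insist on the almost-sure version you sketch, it can in fact be obtained from the same product estimate (apply it at the hitting times $T_M=\inf\{n:Z_n>M\}$ and let $M\to\infty$), but some version of that product/stopping-time argument is unavoidable; Chebyshev plus Borel--Cantelli alone cannot supply it. A further minor point: converting an a.s.\ bound $\limsup_n\sum_k Z_{k-1}/Z_{n-1}\leq 1/(1-s^{-1})$ into the stated probability of the non-strict event at exactly the constant $1/(1-s^{-1})$ requires a little extra slack (e.g.\ from the strict inequality in the geometric series of Lemma \ref{lma:DeterministicNormedSumBound}); this is a boundary issue the paper also glosses over and is harmless for the applications, which only need boundedness in probability by some constant.
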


\begin{proof}
    Given $1 < s < \liminf_{z\to\infty}\tau(z)$ (where we recall $\tau(z) := \frac{m\varepsilon(z)}{z}$), we will first show that
    \begin{equation}\label{eqn:LinearMeanVarianceIncreasingTail}
        \lim_{M\to\infty} \P(Z_k > s \cdot Z_{k-1} \;\forall k \in \N_1 | Z_0 > M) = 1.
    \end{equation}
    Let $t$ be such that $s < t < \liminf_{z\to\infty}\tau(z)$. There therefore exists $N > 0$ such that $m \varepsilon(z) > tz$ for all $z \geq N$. Then, for $k \in \N_1$ such that $Z_{k-1} > N$, we note that
    \begin{align*}
        \P(Z_k > s \cdot Z_{k-1} | Z_{k-1})
        &= 1 - \P(Z_k \leq s \cdot Z_{k-1} | Z_{k-1}) \\
        &\geq 1 - \P\big(
            |Z_k - m\varepsilon(Z_{k-1})| \geq (t - s) \cdot Z_{k-1}
        \big| Z_{k-1} \big) \\
        &\geq 1 - \frac{\sigma^2 \cdot \varepsilon(Z_{k-1}) + m^2 \cdot \nu^2(Z_{k-1})}{(t - s)^2 \cdot Z^2_{k-1}} \tag{Chebyshev's inequality} \\
        &\geq 1 - \frac{\sigma^2 a + m^2 b}{(t - s)^2 \cdot Z_{k-1}} \tag{by \ref{eqn:BoundedControlFunctionMoments}} \\
        &= 1 - \frac{A_{t,s}}{Z_{k-1}},
    \end{align*}
    where $A_{t,s} := \frac{\sigma^2 a + m^2 b}{(t - s)^2}$. Hence, for $M > N$,
    \[
        \P\big(
            Z_k > s \cdot Z_{k-1} \;\forall k \in \N_1
        \big| Z_0 > M \big)
        = \prod_{k=1}^{\infty} \P\big(
            Z_k > s \cdot Z_{k-1}
        \big| Z_{k-1} > s^{k-1} \cdot M \big)
        > \prod_{k=1}^{\infty} \bigg(
            1 - \frac{A_{t,s}}{s^{k-1} M}
        \bigg).
    \]
    Given the continuity and monotonicity of the logarithm function, to show that $\prod_{k=1}^{\infty} \big( 1 - \frac{A_{t,s}}{s^{k-1}M} \big) \stackrel{M \to \infty}{\longrightarrow} 1$, it is equivalent to show that $\sum_{k=0}^{\infty} \log\big(1 - \frac{A_{t,s}}{s^k M} \big) \stackrel{M \to \infty}{\longrightarrow} 0$. Indeed, for $x \in [0,1)$, using the Taylor expansion $\log(1 - x) = - \sum_{n=1}^{\infty} \frac{x^n}{n}$, we can see that, for $M > A_{t,s} \vee N$,
    \begin{align*}
        0 > \sum_{k=0}^{\infty} \log\bigg(1 - \frac{A_{t,s}}{s^k M} \bigg)
        &= - \sum_{k=0}^{\infty} \sum_{n=1}^{\infty} \frac{1}{n} \cdot \frac{A_{t,s}^n}{s^{nk} M^n} \\
        &= - \sum_{n=1}^{\infty} \frac{A_{t,s}^n}{n M^n} \sum_{k=0}^{\infty}  \frac{1}{s^{nk}} \\
        &= - \sum_{n=1}^{\infty} \frac{A_{t,s}^n}{n M^n} \cdot \frac{1}{1-s^{-n}} \\
        &> - \sum_{n=1}^{\infty} \frac{A_{t,s}^n}{n M^n} \cdot \frac{1}{1-s^{-1}} \\
        &= \log\Big( 1 - \frac{A_{t,s}}{M} \Big) \cdot \frac{1}{1-s^{-1}} \\
        &\stackrel{M\to\infty}{\longrightarrow} 0.
    \end{align*}
    Thus, having shown \eqref{eqn:LinearMeanVarianceIncreasingTail}, we can return to the task of proving that $\lim_{n\to\infty} \P\Big( \sum_{k=1}^n \frac{Z_{k-1}}{Z_{n-1}} \leq \frac{1}{1 - s^{-1}} \Big) = 1$.
    For $n > 0$, define
    \[
        K^{(n)} := \argmax_{k < \log(n)} \{Z_k\}
        \quad\text{and}\quad
        M^{(n)} := Z_{K^{(n)}} - 1.
    \]
    We can decompose
    \begin{align*}\label{eqn:NormedSumBoundedInProbabilityDecomposition}
        &\lim_{n \to \infty} \P\bigg(
            \sum_{k=1}^n \frac{Z_{k-1}}{Z_{n-1}} \leq \frac{1}{1 - s^{-1}}
        \bigg) \\
        &\leq \P\bigg(
            \sum_{k=1}^n \frac{Z_{k-1}}{Z_{n-1}} \leq \frac{1}{1 - s^{-1}}
            \;,\;
            Z_k \geq s \cdot Z_{k-1} \;\forall k \geq K^{(n)}
        \bigg) \\
        &= \lim_{n \to \infty} \P\bigg(
            \sum_{k=1}^n \frac{Z_{k-1}}{Z_{n-1}} \leq \frac{1}{1 - s^{-1}}
        \bigg| Z_k \geq s \cdot Z_{k-1} \;\forall k \geq K^{(n)} \bigg)
            \cdot \P\big( Z_k \geq s \cdot Z_{k-1} \;\forall k \geq K^{(n)} \big). \numberthis
    \end{align*}    
    Using the fact that $\frac{K^{(n)}}{s^{n - K^{(n)} - 1}} \leq \frac{\log(n)}{s^{n - \log(n) - 1}} \stackrel{n \to \infty}{\longrightarrow} 0$ in the first step, and Lemma \ref{lma:DeterministicNormedSumBound} in the second (with $K_M := K^{(n)}$ and $M := M^{(n)}$), we can find that
    \begin{align*}\label{eqn:NormedSumBoundedInProbabilityPart1}
        &\lim_{n \to \infty} \P\bigg(
            \sum_{k=1}^n \frac{Z_{k-1}}{Z_{n-1}} \leq \frac{1}{1 - s^{-1}}
        \bigg| Z_k \geq s \cdot Z_{k-1} \;\forall k \geq K^{(n)} \bigg) \\
        &= \lim_{n \to \infty} \P\bigg(
            \sum_{k=1}^n \frac{Z_{k-1}}{Z_{n-1}} < \frac{K^{(n)}}{s^{n - K^{(n)} - 1}} + \frac{1}{1 - s^{-1}}
        \bigg| Z_k \geq s \cdot Z_{k-1} \;\forall k \geq K^{(n)} \bigg) \\
        &= 1. \numberthis
    \end{align*}
    Additionally, since $M^{(n)} \to \infty$ on $\{ Z_n \to \infty \}$, a combination of the strong Markov property and \eqref{eqn:LinearMeanVarianceIncreasingTail} will yield that
    \begin{align*}\label{eqn:NormedSumBoundedInProbabilityPart2}
        \lim_{n \to \infty}\P\big( Z_k \geq s \cdot Z_{k-1} \;\forall k \geq K^{(n)} \big)
        &= \lim_{n \to \infty} \P\big( Z_k \geq s \cdot Z_{k-1} \;\forall k \in \N_1 \big| Z_0 = M^{(n)} \big) \\
        &= \lim_{M \to \infty} \P\big( Z_k \geq s \cdot Z_{k-1} \;\forall k \in \N_1 \big| Z_0 = M \big) \\
        &= 1. \numberthis
    \end{align*}
    Substituting \eqref{eqn:NormedSumBoundedInProbabilityPart1} and \eqref{eqn:NormedSumBoundedInProbabilityPart2} into \eqref{eqn:NormedSumBoundedInProbabilityDecomposition} then yields the result.
\end{proof}

\begin{proof}[Proof of Theorem \ref{thm:KnownPhiConsistentEstimators}:]
    Let $\Z$ be a supercritical CBP with control function $\phi(\cdot)$ known, satisfying \eqref{eqn:BoundedControlFunctionMoments} and with $\sigma^2$ finite. Take $\F_n$ to be the $\sigma$-algebra generated by $(Z_0, \dots, Z_n)$.
    Note also the following, which will be used frequently throughout the proof:
    \begin{align*}\label{eqn:CBPConditionalVariance}
        \V( Z_n | \F_{n-1} )
        &= \V\big(
            \E(Z_n | \F_{n-1} \,,\, \phi(Z_{n-1}))
        \big)
        + \E\big(
            \V(Z_n | \F_{n-1} \,,\, \phi(Z_{n-1}))
        \big) \\
        &= \V(m \cdot \phi(Z_{n-1}))+\E(\sigma^2 \cdot \phi(Z_{n-1})) \\
        &= m^2 \cdot \nu^2(Z_{n-1})+\sigma^2 \cdot \varepsilon(Z_{n-1}), \numberthis
    \end{align*}
    Additionally, since \eqref{eqn:BoundedControlFunctionMoments} plus the supercriticality of the process $\Z$ imply that $\varepsilon(z) = \Theta(z)$ (recall this means that $\varepsilon(z)$ is of \textit{exact order} $z$) and $\nu^2(z) = O(z)$, there exists $C > 0$ such that
    \begin{equation}\label{eqn:NuSquaredOverEpsilonBound}
        \frac{\nu^2(z)}{\varepsilon(z)} \leq C
        \quad \text{for all}\; z \in \N_0 \;\text{such that}\; \varepsilon(z) > 0.
    \end{equation} 

    \smallskip

    \textbf{(i) Strong consistency of $\boldsymbol{\hat{m}_n}$:}
    To show that $\hat{m}_n$ is consistent on $\{ Z_n \to \infty \}$, 
    we 
    %can assume, without loss of generality, that $\P( Z_n \to \infty)>0$. We 
    consider the following two cases: (a) when $\varepsilon(z) > 0$ for all $z \in \N_0$, and (b) when $\varepsilon(z) = 0$ for some some $z$.
    % In case (a), under our assumptions, $\P(Z_n\to\infty)=1$; proving consistency on the set of unbounded growth is then equivalent to proving consistency without that restriction. In case (b), $\varepsilon(0) = 0$ implies that $0$ is an absorbing state, in which case $\P(Z_n\to\infty)<1$, and we have to carefully deal with the restriction to the set of unbounded growth.
    
    \begin{enumerate}
        \item[(a)] Under our assumptions, $\P(Z_n\to\infty)=1$; proving consistency on the set of unbounded growth is then equivalent to proving consistency without that restriction. Since $\varepsilon(z) > 0$ for all $z \in \N_0$ in this case, we have $I^{\varepsilon}_n = \{ 1, \dots, n \}$, so that $\hat{m}_n$ simplifies to
        \[
            \hat{m}_n := \frac{1}{n} \sum_{k = 1}^n \frac{Z_k}{\varepsilon(Z_{k-1})}.
        \]
        Let $U_n := \tilde{m}_n - m$, where we recall that $\tilde{m}_n := \frac{Z_n}{\varepsilon(Z_{n-1})}$.  We then have $\E(U_n | \F_{n-1}) = 0$ and $\E|U_n| \leq 2m$,
        while
        \begin{align*}
            \E( U_n^2 | \F_{n-1} )
            &= \varepsilon^{-2}(Z_{n-1}) \cdot \V( Z_n | \F_{n-1} ) \\
            &= \frac{\sigma^2}{\varepsilon(Z_{n-1})} + \frac{m^2 \nu^2(Z_{n-1})}{\varepsilon^2(Z_{n-1})} \tag{by \ref{eqn:CBPConditionalVariance}} \\
            &\leq \frac{\sigma^2 + m^2 C}{\varepsilon(Z_{n-1})} \tag{by \ref{eqn:NuSquaredOverEpsilonBound}}.
        \end{align*}
        Since $\varepsilon(z) > 0$ for all $z$ by assumption, and since $\lim_{z\to\infty} \varepsilon(z) = \infty$ by the supercriticality of the process, we can reason that $\{ \varepsilon(z) \}_{z \in \N_0}$ has its minimum bounded away from zero. Hence there exists a positive constant $C_1$ such that $\E( U_n^2 | \F_{n-1} ) \leq C_1$, so that
        \[
            \sum_{n=1}^\infty \frac{1}{n^2} \E( U_n^2 | \F_{n-1} ) < \sum_{n=1}^\infty \frac{C_1}{n^2} < \infty.
        \]
        Then by Corollary \ref{crly:SLLNforMDS}, $\frac{1}{n} \sum_{k=1}^n U_k \stackrel{a.s.}{\longrightarrow} 0$ and thus $\hat{m}_n \stackrel{a.s.}{\longrightarrow} m$.
        \medskip

        \item[(b)] In this case, there are two main differences with (a): (i) due to the existence of some $z$ such that $\varepsilon(z) = 0$, the index set $I^{\varepsilon}_n$ is no longer equivalent to $\{ 1, \dots, n \}$, and (ii) if $\varepsilon(0) = 0$, then $0$ is an absorbing state, which implies $\P(Z_n\to\infty)<1$, and we have to carefully deal with the restriction to the set of unbounded growth. 
        
        We deal with both (i) and (ii) by defining a modified process that satisfies case (a) and has the same asymptotic properties as the original process on $\{ Z_n \to \infty \}$.
        Without loss of generality, we assume that $\varepsilon(z_0) > 0$ (otherwise, a similar argument can be applied).
       % In this case, $\varepsilon(z) = 0$ for some $z$. We distinguish situations where $\varepsilon(z_0) > 0$ or $\varepsilon(z_0) = 0$. Assume first that $\varepsilon(z_0) > 0$, and 
        
        We let $L_\varepsilon=\{z\in \N_0: \varepsilon(z)=0\}$ and $\{ Z^{\uparrow}_n, z_0 \}$ be a process with
        \[
            \xi^{\uparrow} \stackrel{d}{=} \xi
            \quad\text{and}\quad
            \phi^{\uparrow}(z) \stackrel{d}{=} \begin{cases}
                z_0 \;\;\text{a.s.,} & \text{if}\;\; z\in L_\varepsilon \\
                \phi(z), & \text{otherwise.}
            \end{cases}
        \]
        This implies that, for all $z$ such that $\varepsilon(z) > 0$,
        \[
            (Z^{\uparrow}_n | Z^{\uparrow}_{n-1} = z) \stackrel{d}{=} (Z_n | Z_{n-1} = z),
        \]
        but $\{ Z^{\uparrow}_n, z_0 \}$ has $\varepsilon^{\uparrow}(z) > 0$ for all $z \in \N_0$, and therefore constitutes a process satisfying case (a).
        Hence
        \[
            \frac{1}{n} \sum_{k=1}^n U^{\uparrow}_k \stackrel{a.s.}{\longrightarrow} 0
            \;\;\text{as}\;\;
            n \to \infty,
            \quad\text{for}\;\;
            U^{\uparrow}_k := \frac{Z^{\uparrow}_k}{\varepsilon^{\uparrow}( Z^{\uparrow}_{k-1})} - m .
        \]
        We define $T^{\uparrow}_0$, respectively $T_0$, as the last time $\{ Z^{\uparrow}_n, z_0 \}$, resp. $\{ Z_n, z_0 \}$, visits a state in $L_\varepsilon$ before escaping to infinity, with the convention that $T^{\uparrow}_0=0$, resp. $T_0=0$,  if the process never visits $L_\varepsilon$.
        Because $\P( Z^{\uparrow}_n \to \infty) = 1$, $T^{\uparrow}_0$ is almost surely finite.
        Similarly, on $\{ Z_n \to \infty \}$, $T_0$ is almost surely finite. We then have
        \[
            \{ Z^{\uparrow}_n, z_0 \}_{n \geq T^{\uparrow}_0} \stackrel{d}{=} \{Z_n, z_0\}_{n \geq T_0}
            \quad\text{on}\quad
            \{ Z_n \to \infty \},
        \]
        so that
        \begin{equation}\label{eqn:EquivalentSumLimits}
            \lim_{n \to \infty} \frac{1}{n} \sum_{k=T^{\uparrow}_0}^{n} U^{\uparrow}_k
            =
            \lim_{n \to \infty} \frac{1}{n} \sum_{k=T_0}^n U_k
            \quad\text{a.s.\ on}\;
            \{ Z_n \to \infty \}.
        \end{equation}
        Since there are an a.s.\ finite number of generations (i.e., terms in the sum) before $T^{\uparrow}_0$ and $T_0$, we have
        \begin{equation}\label{eqn:PartialSumsEqualCompleteSums}
            \frac{1}{n} \sum_{k=1}^n U^{\uparrow}_k \stackrel{a.s.}{\longrightarrow} 0
            \implies \frac{1}{n} \sum_{k=T^{\uparrow}_0}^n U^{\uparrow}_k \stackrel{a.s.}{\longrightarrow} 0
            \quad\text{and}\quad
            \frac{1}{n} \sum_{k=T_0}^n U_k \stackrel{a.s.}{\longrightarrow} 0
            \implies \frac{1}{| I^{\varepsilon}_n |} \sum_{k \in I^{\varepsilon}_n} U_k \stackrel{a.s.}{\longrightarrow} 0.
        \end{equation}
        By combining \eqref{eqn:EquivalentSumLimits} and \eqref{eqn:PartialSumsEqualCompleteSums}, it follows that $\hat{m}_n \stackrel{a.s.}{\longrightarrow} m$ on the event $\{ Z_n \to \infty \}$. \\

        % The second case to consider is when $\varepsilon(z_0) = 0$. In this case $\Z$ must have $\varepsilon(0) > 0$ to have a positive probability of escaping to infinity. In this case we define the random variable $\phi^+(0)$ by
        % \[
        %     \P( \phi^+(0) = 0) = 0,
        %     \quad\text{and}\quad
        %     \P( \phi^+(0) = k) = \frac{\P( \phi(0) = k) }{1 - \P(\phi(0) = 0)}
        %     \quad\text{for}\;\;
        %     k \geq 1,
        % \]
        % and define $\{ Z^{\uparrow}_n, z_0 \}$ to have
        % \[
        %     \xi^{\uparrow} \stackrel{d}{=} \xi
        %     \quad\text{and}\quad
        %     \phi^{\uparrow}(z) \stackrel{d}{=} \begin{cases}
        %         \phi^+(0), & \text{if}\;\; \varepsilon(z) = 0 \\
        %         \phi(z), & \text{if}\;\;  \varepsilon(z) > 0,
        %     \end{cases}
        % \]
        % and then follow essentially the same procedure as for the previous case to show that, here too, $\hat{m}_n \stackrel{a.s.}{\longrightarrow} m$ on the event $\{ Z_n \to \infty \}$.
    \end{enumerate}

    \medskip

    With the consistency of $\hat{m}_n$ shown, it remains to prove the consistency of $\bar{\sigma}^2_n$ and $\hat{\sigma}^2_n$. Accordingly, we now further assume that that there exist positive constants $c$ and $d$ such that $\sup_{z \geq 1} \left\{ \frac{|\iota(z)|}{z} \right\} \leq c$ and $\sup_{z \geq 1} \Big\{ \frac{\E(\phi(z) - \varepsilon(z))^4}{z^2} \Big\} \leq d$, and that $\E(\xi - m)^4$ is finite.
    In addition, to avoid repeating ourselves, we hereafter also assume that $\varepsilon(z) > 0$ for all $z \in \N_0$, so that our two estimators simplify to
    \[
        \bar{\sigma}^2_n := \frac{1}{n} \sum_{k = 1}^n \frac{(Z_k - m \cdot \varepsilon(Z_{k-1}))^2 - m^2 \cdot \nu^2(Z_{k-1})}{\varepsilon(Z_{k-1})}
    \]
    and
    \[
        \hat{\sigma}^2_n := \frac{1}{n} \sum_{k = 1}^n \frac{(Z_k - \tilde{m}_n \cdot \varepsilon(Z_{k-1}))^2 - \tilde{m}_n^2 \cdot \nu^2(Z_{k-1})}{\varepsilon(Z_{k-1})}
    \]    
    We omit the extension to the case where $\varepsilon(z) = 0$ for some $z \in \N_0$, which follows the same argument as in in case (b) above.

    \bigskip

    \textbf{(ii) Strong consistency of $\boldsymbol{\bar{\sigma}^2_n}$:}
    In this case, we consider $m$ to be known. We introduce the estimator
    \[
        \tilde{\sigma}_n^2 := \frac{(Z_n - m \cdot \varepsilon(Z_{n-1}))^2 - m^2 \cdot \nu^2(Z_{n-1})}{\varepsilon(Z_{n-1})},
    \]
    such that $\bar{\sigma}_n^2 = \frac{1}{n} \sum_{k=1}^n \tilde{\sigma}_k^2$, and the random variable
    \[
        U_n := \tilde{\sigma}_n^2 - \sigma^2,
    \]
    for which we use \eqref{eqn:CBPConditionalVariance} to see that
    \[
        \E(U_n | \F_{n-1})
        = \varepsilon^{-1}(Z_{n-1}) \cdot \Big\{
            \V( Z_n | \F_{n-1} ) - m^2 \cdot \nu^2(Z_{n-1})
        \Big\} - \sigma^2=0.
    \]
    Thus $\E U_n = \E\big( \E( U_n | \F_{n-1} ) \big) = 0$, and given that $\nu^2(Z_{n-1}) / \varepsilon(Z_{n-1}) < C$ by \eqref{eqn:NuSquaredOverEpsilonBound}, we have
    \begin{align*}
        \E|U_n|
        &\leq \E\Bigg|
            \frac{(Z_n - m \cdot \varepsilon(Z_{n-1}))^2}{\varepsilon(Z_{n-1})}
        \Bigg|
        + \E\Bigg|
            \frac{m^2 \cdot \nu^2(Z_{n-1})}{\varepsilon(Z_{n-1})}
        \Bigg| + \sigma^2 \\
        &= \E U_n + 2 m^2 \cdot \E\Bigg(
            \frac{\nu^2(Z_{n-1})}{\varepsilon(Z_{n-1})}
        \Bigg) + 2 \sigma^2 \\
        &< \infty.
    \end{align*}
    In addition,
    \[
        \sum_{n=1}^{\infty} \frac{1}{n^2} \E U_n^2
        = \sum_{n=1}^{\infty} \frac{1}{n^2} \Big(
            \E\tilde{\sigma}_n^4 - \sigma^4
        \Big)
        < \sum_{n=1}^{\infty} \frac{1}{n^2} \E\tilde{\sigma}_n^4.
    \]
    We have that
    \begin{align*}
        \E\big( \tilde{\sigma}_n^4 \big| \F_{n-1} \big)
        &= \varepsilon^{-2}(Z_{n-1}) \cdot \E\Big[
            \big( Z_n - m \cdot \varepsilon(Z_{n-1}) \big)^4
        \Big| \F_{n-1} \Big]
        - 2 m^2 \sigma^2 \cdot \frac{\nu^2(Z_{n-1})}{\varepsilon(Z_{n-1})}
        - m^4 \cdot \frac{\nu^4(Z_{n-1})}{\varepsilon^2(Z_{n-1})} \\
        &< \varepsilon^{-2}(Z_{n-1}) \cdot \E\Big[
            \big( Z_n - m \cdot \varepsilon(Z_{n-1}) \big)^4
        \Big| \F_{n-1} \Big],
    \end{align*}
    and by a lengthy yet elementary expansion can find that
    \begin{align*}\label{eqn:FourthCentralMomentBound}
        \E\Big[
            \big( Z_n - m \cdot \varepsilon(Z_{n-1}) \big)^4
        \Big| \F_{n-1} \Big]
        &= m^4 \E(\phi(Z_{n-1}) - \varepsilon(Z_{n-1}))^4 + 6 \sigma^2 m^2 \iota(Z_{n-1}) \\
        &\quad - 12m^2\sigma^2 \varepsilon(Z_{n-1}) \nu^2(Z_{n-1}) \\
        &\quad + \big( 4 \gamma m + 3\sigma^4 + 18 \sigma^2 m^2 \big) \nu^2(Z_{n-1}) + 3\sigma^4 \varepsilon^2(Z_{n-1}) \\
        &\quad + \big( \E(\xi - m)^4 - 3\sigma^4 \big) \varepsilon(Z_{n-1}), \numberthis
    \end{align*}
    so that
    \begin{align*}
        \E\big( \tilde{\sigma}_n^4 \big| \F_{n-1} \big)
        &< \frac{m^4 \E(\phi(Z_{n-1}) - \varepsilon(Z_{n-1}))^4}{\varepsilon^2(Z_{n-1})} + \frac{6 \sigma^2 m^2 \iota(Z_{n-1})}{\varepsilon^2(Z_{n-1})}  \\
        &\quad + \frac{\big( 4 \gamma m + 3\sigma^4 + 18 \sigma^2 m^2 \big) \nu^2(Z_{n-1})}{\varepsilon^2(Z_{n-1})} + 3\sigma^4 + \frac{\E(\xi - m)^4 - 3\sigma^4}{\varepsilon(Z_{n-1})}.
    \end{align*}
    Given \eqref{eqn:BoundedControlFunctionMoments} and our assumptions on the bounds on $|\iota(Z_{n-1})|$, $\E(\phi(Z_{n-1}) - \varepsilon(Z_{n-1}))^4$ and $\E(\xi - m)^4$,
    there will therefore exist a positive constant $C_1$ such that $\E\big( \tilde{\sigma}_n^4 \big| \F_{n-1} \big) \leq C_1$.
    Hence
    \[
        \sum_{n=1}^{\infty} \frac{1}{n^2} \E U_n^2
        < \sum_{n=1}^{\infty} \frac{1}{n^2} \E\big(\E\big( \tilde{\sigma}_n^4 \big| \F_{n-1} \big)\big)
        < \sum_{n=1}^\infty \frac{C_1}{n^2}
        < \infty.
    \]
    Consequently, by Corollary \ref{crly:SLLNforMDS}, $\frac{1}{n} \sum_{k=1}^n U_k \stackrel{a.s.}{\longrightarrow} 0$ and thus $\bar{\sigma}^2_n \stackrel{a.s.}{\longrightarrow} \sigma^2$.

    \bigskip

    \textbf{(iii) Weak consistency of $\boldsymbol{\hat{\sigma}^2_n}$:} Given the decomposition
    \[
        \hat{\sigma}^2_n
        = \bar{\sigma}^2_n
        + \underbrace{
            \frac{2(m - \tilde{m}_n)}{n} \sum_{k=1}^n \big( Z_k - m \cdot \varepsilon(Z_{k-1}) \big)
        }_{\text{I}}
        + \underbrace{
            \frac{(m - \tilde{m}_n)^2}{n} \sum_{k=1}^n \varepsilon(Z_{k-1})
        }_{\text{II}}
        + \underbrace{
            \frac{m^2 - \tilde{m}^2_n}{n} \sum_{k=1}^n \frac{\nu^2(Z_{k-1})}{\varepsilon(Z_{k-1})},
        }_{\text{III}}
    \]
    and since we showed in (ii) that $\bar{\sigma}^2_n \stackrel{a.s.}{\longrightarrow} \sigma^2$, the result $\hat{\sigma}^2_n \stackrel{P}{\longrightarrow} \sigma^2$ will follow if we can show that I, II, and III converge in probability to zero.

    \begin{enumerate}
        \item[(I)] Recalling that $\tilde{m}_n := Z_n / \varepsilon(Z_{n-1})$, we can decompose
        \[
            \hspace*{-0.6cm}
            \frac{2(m - \tilde{m}_n)}{n} \sum_{k=1}^n \big( Z_k - m \cdot \varepsilon(Z_{k-1}) \big)
            = -2 \underbrace{
                \sqrt{\frac{\sum_{k=1}^n \varepsilon(Z_{k-1})}{\varepsilon(Z_{n-1})}}
            }_{\text{(a)}} \cdot \underbrace{
                \frac{Z_n - m \varepsilon(Z_{n-1})}{n^{1/4} \cdot \sqrt{\varepsilon(Z_{n-1})}}
            }_{\text{(b)}} \cdot \underbrace{
                 \frac{\sum_{k=1}^n(Z_k - m \varepsilon(Z_{k-1}))}{n^{3/4}\sqrt{\sum_{k=1}^n \varepsilon(Z_{k-1})}}
            }_{\text{(c)}},
        \]
         and consider each of the components $(a)$, $(b)$ and $(c)$ in turn. \\

        \begin{enumerate}
            \item[(a)] Given \eqref{eqn:BoundedControlFunctionMoments}, $\varepsilon(z) \leq a z$. Additionally, for $t$ such that $1 < t < \liminf_{z\to\infty} \tau(z)$, there exists $N > 0$ such that, for $z \geq N$, $\varepsilon(z) > \frac{t}{m} z$. Therefore, on $\{ Z_n \to \infty \}$,
            \begin{equation}\label{eqn:NormedEpsilonSumLimitBound}
                \lim_{n \to \infty} \frac{\sum_{k=1}^n \varepsilon(Z_{k-1})}{\varepsilon(Z_{n-1})}
                \leq \lim_{n \to \infty} \frac{am}{t} \cdot \frac{\sum_{k=1}^n Z_{k-1}}{Z_{n-1}} \quad\text{a.s.}
            \end{equation}
            
            Then, as a result of Lemma \ref{lma:NormedSumBoundedInProbability}, for any $s$ such that $1 < s < t$, we have that
            \[
                \lim_{n\to\infty} \P\Bigg(
                    \sqrt{\frac{\sum_{k=1}^n \varepsilon(Z_{k-1})}{\varepsilon(Z_{n-1})}}
                    \leq \sqrt{\frac{t}{am(1 - s^{-1})}}
                \Bigg) = 1,
            \]
            that is, $\sqrt{\frac{\sum_{k=1}^n \varepsilon(Z_{k-1})}{\varepsilon(Z_{n-1})}}$ is asymptotically bounded by a constant in probability.
            \medskip

            \item[(b)] Given that
            \[
                \E\bigg(
                    \frac{Z_n - m \varepsilon(Z_{n-1})}{n^{1/4} \cdot \sqrt{\varepsilon(Z_{n-1})}}
                \bigg| \F_{n-1} \bigg) = 0,
            \]
            and given that, by \eqref{eqn:BoundedControlFunctionMoments} and \eqref{eqn:NuSquaredOverEpsilonBound},
            \[
                \V\bigg(
                    \frac{Z_n - m \varepsilon(Z_{n-1})}{n^{1/4} \cdot \sqrt{\varepsilon(Z_{n-1})}}
                \bigg| \F_{n-1} \bigg)
                = \frac{\sigma^2 \cdot \varepsilon(Z_{n-1}) + m^2 \cdot \nu^2(Z_{n-1})}{\sqrt{n} \cdot \varepsilon(Z_{n-1})}
                \leq \frac{\sigma^2 + m^2 C}{\sqrt{n}}
                = O\bigg( \frac{1}{\sqrt{n}} \bigg),
            \]
            we see that
            \[
                \E\bigg(
                    \frac{Z_n - m \varepsilon(Z_{n-1})}{n^{1/4} \cdot \sqrt{\varepsilon(Z_{n-1})}}
                \bigg)
                = 0
                \quad\text{and}\quad
                \V\bigg(
                    \frac{Z_n - m \varepsilon(Z_{n-1})}{n^{1/4} \cdot \sqrt{\varepsilon(Z_{n-1})}}
                \bigg)
                \stackrel{n\to\infty}{\longrightarrow} 0,
            \]
            so, as a consequence of Chebyshev's inequality, it follows that
            \[
                \frac{Z_n - m \varepsilon(Z_{n-1})}{n^{1/4} \cdot \sqrt{\varepsilon(Z_{n-1})}}
                \stackrel{P}{\longrightarrow} 0.
            \]

            \item[(c)] Defining
            \[
                U_n := Z_n - m \varepsilon(Z_{n-1})
                \quad\text{and}\quad
                J_n := n^{3/4}\sqrt{\sum_{k=1}^n \varepsilon(Z_{k-1}}),
            \]
            we can see that $\{U_n\}_{n\in\N_0}$ is a martingale difference sequence, since $\E(U_n | \F_{n-1}) = 0$ for all $n \in \N_1$ and
            \begin{align*}
                \E|U_n|
                &\leq \sqrt{\E(U_n^2)} \tag{H\"{o}lder's inequality} \\
                &= \sqrt{\E(\V(Z_n | \F_{n-1}))} \\
                &\leq \sqrt{(\sigma^2 a + m^2 b) \cdot \E(Z_{n-1})} \tag{by \ref{eqn:BoundedControlFunctionMoments}} \\
                &< \infty,
            \end{align*}
            and $\{J_n\}_{n\in\N_0}$ is positive, non-decreasing, with $\lim_{n\to\infty} J_n = \infty$.
            Additionally, using \eqref{eqn:CBPConditionalVariance} and \eqref{eqn:NuSquaredOverEpsilonBound},
            \begin{align*}
                \sum_{n=1}^{\infty} J_n^{-2} \E(U_n^2 | \F_{n-1})
                &= \sum_{n=1}^{\infty} \frac{\V(Z_n | \F_{n-1})}{n^{3/2} \cdot \sum_{k=1}^n \varepsilon(Z_{k-1})} \\
                &< \sum_{n=1}^{\infty} \frac{\V(Z_n | \F_{n-1})}{n^{3/2} \cdot \varepsilon(Z_{n-1})} \\
                &\leq \sum_{n=1}^{\infty} \frac{\sigma^2 + m^2 C}{n^{3/2}} \\
                &< \infty \;\;\text{a.s.},
            \end{align*}
            so Theorem \ref{thm:SLLNforMDS} applies, such that
            $J_n^{-1} \sum_{k=1}^n U_k = \frac{\sum_{k=1}^n(Z_k - m \varepsilon(Z_{k-1}))}{n^{3/4}\sqrt{\sum_{k=1}^n \varepsilon(Z_{k-1})}} \stackrel{a.s.}{\longrightarrow} 0$ as $n \to \infty$. \\
        \end{enumerate}

        Then, since $(a)$ is asymptotically bounded by a constant in probability, and $(b)$ and $(c)$ both converge to zero in probability, $\frac{2(m - \tilde{m}_n)}{n} \sum_{k=1}^n \big( Z_k - m \cdot \varepsilon(Z_{k-1}) \big) \stackrel{P}{\longrightarrow} 0$. \\

        \item[(II)] We can decompose
        \[
            \frac{(m - \tilde{m}_n)^2}{n} \sum_{k=1}^n \varepsilon(Z_{k-1})
            = \frac{(Z_n - m \varepsilon(Z_{n-1}))^2}{n \cdot \varepsilon(Z_{n-1})} \cdot \sum_{k=1}^n \frac{\varepsilon(Z_{k-1})}{\varepsilon(Z_{n-1})},
        \]
        where we know from Lemma \ref{lma:NormedSumBoundedInProbability} and \eqref{eqn:NormedEpsilonSumLimitBound}  that, on $\{ Z_n \to \infty \}$ and for $s,\, t$ such that $1 < s < t < \liminf_{z\to\infty} \tau(z)$,
        \[
            \lim_{n\to\infty} \P\Bigg(
                \sum_{k=1}^n \frac{\varepsilon(Z_{k-1})}{\varepsilon(Z_{n-1})}
                \leq \frac{t}{am(1 - s^{-1})}
            \Bigg) = 1.
        \]
        Therefore the result $\frac{(m - \tilde{m}_n)^2}{n} \sum_{k=1}^n \varepsilon(Z_{k-1}) \stackrel{P}{\longrightarrow} 0$ will follow if we show that
        \begin{equation}\label{eqn:KnownPhiIISufficientCondition}
            \frac{(Z_n - m \varepsilon(Z_{n-1}))^2}{n \cdot \varepsilon(Z_{n-1})} \stackrel{P}{\longrightarrow} 0.
        \end{equation}
        Since $\frac{(Z_n - m \varepsilon(Z_{n-1}))^2}{n \cdot \varepsilon(Z_{n-1})}$ is non-negative, and given \eqref{eqn:NuSquaredOverEpsilonBound},
        \[
            \E\Bigg( \frac{(Z_n - m \varepsilon(Z_{n-1}))^2}{n \cdot \varepsilon(Z_{n-1})} \Bigg)
            = \E\Bigg( \E\bigg(
                \frac{(Z_n - m \varepsilon(Z_{n-1}))^2}{n \cdot \varepsilon(Z_{n-1})}
            \bigg| \F_{n-1} \bigg) \Bigg)
            \leq \frac{\sigma^2 + m^2 C}{n}
            \stackrel{n\to\infty}{\longrightarrow} 0.
        \]
        Hence \eqref{eqn:KnownPhiIISufficientCondition} follows from Markov's inequality. \\

        \item[(III)] We want to show that $\frac{m^2 - \tilde{m}^2_n}{n} \sum_{k=1}^n \frac{\nu^2(Z_{k-1})}{\varepsilon(Z_{k-1})} \stackrel{P}{\longrightarrow} 0$.
        Since there exists $C > 0$ such that $\frac{\nu^2(z)}{\varepsilon(z)} \leq C$ for all $z$ by \eqref{eqn:NuSquaredOverEpsilonBound}, for any $n \in \N_1$,
        \[
            \bigg|
                \frac{m^2 - \tilde{m}^2_n}{n} \sum_{k=1}^n \frac{\nu^2(Z_{k-1})}{\varepsilon(Z_{k-1})}
            \bigg|
            \leq C \cdot |m^2 - \tilde{m}^2_n|.
        \]
        We note that
        \[
            \E(\tilde{m}_n)
            = \E\bigg(
                \frac{Z_n}{\varepsilon(Z_{n-1})}
            \bigg)
            = \E\big(
                \varepsilon^{-1}(Z_{n-1}) \cdot \E(Z_n | \F_{n-1})
            \big)
            = m
        \]
        and
        \begin{align*}
            \V(\tilde{m}_n)
            &= \E\bigg(
                \V\bigg(
                    \frac{Z_n}{\varepsilon(Z_{n-1})}
                \bigg| \F_{n-1} \bigg)
            \bigg)
            + \V\bigg(
                \E\bigg(
                    \frac{Z_n}{\varepsilon(Z_{n-1})}
                \bigg| \F_{n-1} \bigg)
            \bigg) \\
            &= \E\big(
                \varepsilon^{-2}(Z_{n-1}) \cdot \V( Z_n | \F_{n-1} )
            \big)
            + \V(m) \\
            &= \E\bigg(
                \frac{\sigma^2}{\varepsilon(Z_{n-1})}
            \bigg)
            + \E\bigg(
                \frac{m^2 \cdot \nu^2(Z_{n-1})}{\varepsilon^2(Z_{n-1})}
            \bigg).
        \end{align*}
        We have argued in (I) that there exist $N \in \N_1,\; t > 1$ such that $\varepsilon(Z_{n-1}) > \frac{t Z_{n-1}}{m}$ for all $n > N$. Then, given that $\nu^2(z) \leq b \cdot z$ by \eqref{eqn:BoundedControlFunctionMoments},
        \[
            \lim_{n\to\infty} \V(\tilde{m}_n)
            \leq \lim_{n\to\infty} \frac{m\sigma^2}{t} \cdot \E(Z^{-1}_{n-1})
            + \lim_{n\to\infty} \frac{b m^4}{t^2} \cdot \E(Z^{-1}_{n-1})
            = 0
        \]
        on $\{ Z_n \to \infty \}$, and thus $\tilde{m}_n \stackrel{P}{\longrightarrow} m$. It follows by the continuous mapping theorem that $m^2 - \tilde{m}^2_n \stackrel{P}{\longrightarrow} 0$.
        Thus we see that
        \[
            \frac{m^2 - \tilde{m}^2_n}{n} \sum_{k=1}^n \frac{\nu^2(Z_{k-1})}{\varepsilon(Z_{k-1})}
            \stackrel{P}{\longrightarrow} 0.
        \]
    \end{enumerate}
    Since we have shown that (I), (II), and (III) all converge to zero in probability, we have shown that $\hat{\sigma}^2_n \stackrel{P}{\longrightarrow} \sigma^2$.
\end{proof}

\bigskip

The proof of Theorem \ref{thm:LinearMeanVarPhiConsistentEstimators} follows the same arguments as that of Theorem \ref{thm:KnownPhiConsistentEstimators}, albeit in a simpler setting.

\begin{proof}[Proof of Theorem \ref{thm:LinearMeanVarPhiConsistentEstimators}:]

    Let $\Z$ be a supercritical CBP with unknown control function $\phi(\cdot)$ having $\varepsilon(z) = \alpha z$ and $\nu^2(z) = \beta z$, and with $\sigma^2$ finite. Take $\F_n$ to be the $\sigma$-algebra generated by $(Z_0, \dots, Z_n)$. In this setting, the conditional variance of $Z_n$ given $\F_{n-1}$ has the following form:
    \begin{equation}\label{eqn:LinearMeanVarianceCBPConditionalVariance}
        \V( Z_n | \F_{n-1} )
        = \sigma^2 \alpha + m^2 \beta.
    \end{equation}
    Throughout the proof, we 
    %assume without loss of generality that $\P(Z_n\to\infty) > 0$, and 
    make the simplifying assumption that $\P(Z_1 > 0 \,|\, Z_0 = z) = 1$ for all $z \in \N_1$, so that $I_n = \{1, \dots, n\}$. Under this assumption, $\P(Z_n\to\infty)=1$, and the estimators simplify to
    \[
        \hat{g}_n := \frac{1}{n} \sum_{k=1}^n \frac{Z_k}{Z_{k-1}},
    \]
    \[
        \bar{h}_n := \frac{1}{n} \sum_{k = 1}^n \frac{(Z_k - m \alpha \cdot Z_{k-1})^2}{Z_{k-1}},
        \quad\text{and}\quad
        \hat{h}_n := \frac{1}{n} \sum_{k = 1}^n \frac{(Z_k - \tilde{g}_n \cdot Z_{k-1})^2}{Z_{k-1}}.
    \]
    The extension of the proof to the case where there exists $z \in \N_1$ such that $\P(Z_1 = 0 \,|\, Z_0 = z) > 0$ follows the same arguments as in the proof of Theorem \ref{thm:KnownPhiConsistentEstimators}, part (i), case (b). To avoid repeating ourselves, we omit it. \\

    \textbf{(i) Strong consistency of $\boldsymbol{\hat{g}_n}$:}
    Let $U_n := \tilde{g}_n - m\alpha$, where we recall that $\tilde{g}_n := Z_n / Z_{n-1}$. We then have $\E(U_n) = \E(\E(U_n | \F_{n-1})) = 0$ and $\E|U_n| \leq 2m\alpha$,
    while we can use \eqref{eqn:LinearMeanVarianceCBPConditionalVariance} to see that
    \[
        \E( U_n^2 | \F_{n-1} )
        = Z_{n-1}^{-2} \cdot \V( Z_n | \F_{n-1} )
        = \frac{\sigma^2\alpha + m^2 \beta}{Z_{n-1}}.
    \]
    Our assumption that $\P(Z_1 > 0 | Z_0 = z) = 1$ for all $z \in \N_1$ implies that, for all $k \in \N_1$, $\P(Z_k = 0 | Z_0 = z) = 0$, and thus that $Z_k \geq 1$ a.s..
    Hence
    \[
        \frac{\sigma^2\alpha + m^2 \beta}{Z_{n-1}} \leq \sigma^2\alpha + m^2 \beta,
    \]
    and therefore
    \[
        \sum_{n=1}^\infty \frac{1}{n^2} \E( U_n^2 | \F_{n-1} )
        \leq \sum_{n=1}^\infty \frac{\sigma^2\alpha + m^2 \beta}{n^2}
        < \infty.
    \]
    Then, by Corollary \ref{crly:SLLNforMDS}, $\frac{1}{n} \sum_{k=1}^n U_k \stackrel{a.s.}{\longrightarrow} 0$ and thus $\hat{g}_n \stackrel{a.s.}{\longrightarrow} m\alpha$.

    \bigskip

    \textbf{(ii) Strong consistency of $\boldsymbol{\bar{h}_n}$:}
    Under the assumption that $m\alpha$ is known, we introduce the random variable
    \[
        U_n := \frac{(Z_n - m\alpha \cdot Z_{n-1})^2}{Z_{n-1}}
        - \sigma^2\alpha - m^2 \beta,
    \]
    for which we see that
    \[
        \E(U_n | \F_{n-1})
        = Z_{n-1}^{-1} \cdot \V( Z_n | \F_{n-1} ) - \sigma^2\alpha - m^2\beta
        = 0,
    \]
    so that consequently $\E(U_n) = 0$, and
    \[
        \E|U_n|
        \leq \E\Bigg|
            \frac{(Z_n - m\alpha \cdot Z_{n-1})^2}{Z_{n-1}}
        \Bigg|
        + \sigma^2\alpha + m^2\beta 
        = 2\sigma^2\alpha + 2m^2\beta
        < \infty.
    \]
    In addition, by a lengthy yet elementary expansion, we can find that
    \begin{align*}
        \E(U_n^2 | \F_{n-1} )
        &= \frac{m^4 \E(\phi(Z_{n-1}) - \varepsilon(Z_{n-1}))^4}{Z_{n-1}^2}
        + \frac{6\sigma^2 m^2 \iota(Z_{n-1})}{Z_{n-1}^2} + \frac{\alpha \E(\xi - m)^4}{Z_{n-1}} \\
        &\quad + \frac{4\gamma m\beta + 3 \sigma^4\beta + 18\sigma^2 m^2 \beta + m^4 \alpha^4 - 3 \sigma^4 \alpha}{Z_{n-1}}
        + 3\sigma^4\alpha^2 - 12 \sigma^2 m^2 \alpha \beta + 12 \sigma^2 m^2 \alpha^2 \\
        &\quad + 4m^4 \alpha^2 - (\sigma^2\alpha + m^2\beta)^2.
    \end{align*}
    Given our assumptions on the bounds on $\E(\phi(Z_{n-1}) - \varepsilon(Z_{n-1}))^4$ and $\E(\xi - m)^4$,
    there will therefore exist a positive constant $C$ such that $\E\big( U_n^2 \big| \F_{n-1} \big) \leq C$. Hence
    \[
        \sum_{n=1}^{\infty} \frac{1}{n^2} \E(U_n^2 | \F_{n-1} ) \leq \sum_{n=1}^\infty \frac{C}{n^2} < \infty.
    \]
    Then by Corollary \ref{crly:SLLNforMDS}, $\frac{1}{n} \sum_{k=1}^n U_k \stackrel{a.s.}{\longrightarrow} 0$ and thus $\bar{h}_n \stackrel{a.s.}{\longrightarrow} \sigma^2\alpha + m^2\beta$.

    \bigskip

    \textbf{(iii) Weak consistency of $\boldsymbol{\hat{h}_n}$:} We abandon the assumption that $m\alpha$ is known. We can decompose
    \[
        \hat{h}_n
        = \bar{h}_n
        + \underbrace{
            \frac{2(m\alpha - \tilde{g}_n)}{n} \sum_{k=1}^n (Z_k - m \alpha Z_{k-1})
        }_{\text{I}}
        + \underbrace{
            \frac{(m\alpha - \tilde{g}_n)^2}{n} \sum_{k=1}^n Z_{k-1},
        }_{\text{II}}
    \]
    where we have shown in (ii) that $\bar{h}_n \stackrel{a.s.}{\longrightarrow} \sigma^2\alpha + m^2\beta$.
    It remains to show that I and II both converge in probability to zero. \\

    \begin{enumerate}
        \item[(I)]
        We can decompose
        \[
            \frac{2(m\alpha - \tilde{g}_n)}{n} \sum_{k=1}^n (Z_k - m \alpha Z_{k-1})
            = -2 \underbrace{
                \sqrt{\frac{\sum_{k=1}^n Z_{k-1}}{Z_{n-1}}}
            }_{\text{(a)}} \cdot \underbrace{
                \frac{Z_n - m\alpha Z_{n-1}}{n^{1/4} \cdot \sqrt{Z_{n-1}}}
            }_{\text{(b)}} \cdot \underbrace{
                 \frac{\sum_{k=1}^n(Z_k - m \alpha Z_{k-1})}{n^{3/4}\sqrt{\sum_{k=1}^n Z_{k-1}}}
            }_{\text{(c)}},
        \]
         and consider each of the components $(a)$, $(b)$ and $(c)$ in turn. \\

        \begin{enumerate}
            \item[(a)] As a result of Lemma \ref{lma:NormedSumBoundedInProbability}, for any $s$ such that $1 < s < m\alpha$, we have that
            \[
                \lim_{n\to\infty} \P\Bigg(
                    \sqrt{\frac{\sum_{k=1}^n Z_{k-1}}{Z_{n-1}}} \leq \sqrt{\frac{1}{1 - s^{-1}}}
                \Bigg) = 1,
            \]
            that is, $\sqrt{\frac{\sum_{k=1}^n Z_{k-1}}{Z_{n-1}}}$ is asymptotically bounded by a constant in probability.
            \medskip

            \item[(b)] Given that
            \[
                \E\bigg( \frac{Z_n - m\alpha Z_{n-1}}{n^{1/4} \cdot \sqrt{Z_{n-1}}} \bigg| \F_{n-1} \bigg) = 0
                \quad\text{and}\quad
                \V\bigg( \frac{Z_n - m\alpha Z_{n-1}}{n^{1/4} \cdot \sqrt{Z_{n-1}}} \bigg| \F_{n-1} \bigg) = \frac{\sigma^2 \alpha + m^2 \beta}{\sqrt{n}},
            \]
            we see that
            \[
                \E\bigg(
                    \frac{Z_n - m\alpha Z_{n-1}}{n^{1/4} \cdot \sqrt{Z_{n-1}}}
                \bigg)
                = 0
                \quad\text{and}\quad
                \V\bigg(
                    \frac{Z_n - m\alpha Z_{n-1}}{n^{1/4} \cdot \sqrt{Z_{n-1}}}
                \bigg)
                \stackrel{n\to\infty}{\longrightarrow} 0,
            \]
            so, by Chebyshev's inequality, it follows that
            \[
                \frac{Z_n - m\alpha Z_{n-1}}{n^{1/4} \cdot \sqrt{Z_{n-1}}}
                \stackrel{P}{\longrightarrow} 0.
            \]

            \item[(c)] Defining
            \[
                U_n := Z_n - m\alpha Z_{n-1}
                \quad\text{and}\quad
                J_n := n^{3/4}\sqrt{\sum_{k=1}^n Z_{k-1}},
            \]
            we can see that $\{U_n\}_{n\in\N_0}$ is a martingale difference sequence, since $\E(U_n | \F_{n-1}) = 0$ for all $n \in \N_1$ and
            \begin{align*}
                \E|U_n|
                &\leq \sqrt{\E(U_n^2)} \tag{H\"{o}lder's inequality} \\
                &= \sqrt{\E(\V(Z_n | \F_{n-1}))} \\
                &= \sqrt{(\sigma^2 \alpha + m^2 \beta) \cdot \E(Z_{n-1})} \\
                &< \infty,
            \end{align*}
            and $\{J_n\}_{n\in\N_0}$ is positive, non-decreasing, with $\lim_{n\to\infty} J_n = \infty$.
            Additionally,
            \begin{align*}
                \sum_{n=1}^{\infty} J_n^{-2} \E(U_n^2 | \F_{n-1})
                &= \sum_{n=1}^{\infty} \frac{\V(Z_n | \F_{n-1})}{n^{3/2} \cdot \sum_{k=1}^n Z_{k-1}} \\
                &= \sum_{n=1}^{\infty} \frac{(\sigma^2 \alpha + m^2 \beta) \cdot Z_{n-1}}{n^{3/2} \cdot \sum_{k=1}^n Z_{k-1}} \\
                &< \sum_{n=1}^{\infty} \frac{(\sigma^2 \alpha + m^2 \beta)}{n^{3/2}} \\
                &< \infty \;\;\text{a.s.},
            \end{align*}
            so Theorem \ref{thm:SLLNforMDS} applies, such that
            $J_n^{-1} \sum_{k=1}^n U_k = \frac{\sum_{k=1}^n (Z_k - m \alpha Z_{k-1})}{n^{3/4}\sqrt{\sum_{k=1}^n Z_{k-1}}} \stackrel{a.s.}{\longrightarrow} 0$ as $n \to \infty$. \\
        \end{enumerate}

        Then, since $(a)$ is asymptotically bounded by a constant in probability, and $(b)$ and $(c)$ both converge to zero in probability, $\frac{2(m\alpha - \tilde{g}_n)}{n} \sum_{k=1}^n (Z_k - m \alpha Z_{k-1}) \stackrel{P}{\longrightarrow} 0$. \\

        \item[(II)] We can decompose
        \[
            \frac{(m\alpha - \tilde{g}_n)^2}{n} \sum_{k=1}^n Z_{k-1}
            = \frac{(Z_n - m\alpha Z_{n-1})^2}{n \cdot Z_{n-1}} \cdot \sum_{k=1}^n \frac{Z_{k-1}}{Z_{n-1}},
        \]
        and have already shown in Lemma \ref{lma:NormedSumBoundedInProbability} that $\sum_{k=1}^n \frac{Z_{k-1}}{Z_{n-1}}$ is asymptotically bounded in probability by a constant. Therefore the result $\frac{(m\alpha - \tilde{g}_n)^2}{n} \sum_{k=1}^n Z_{k-1} \stackrel{P}{\longrightarrow} 0$ will follow if we show that $\frac{(Z_n - m\alpha Z_{n-1})^2}{n \cdot Z_{n-1}} \stackrel{P}{\longrightarrow} 0$.
        Noting that $\frac{(Z_n - m\alpha Z_{n-1})^2}{n \cdot Z_{n-1}}$ is non-negative, and that
        \[
            \E\Bigg( \frac{(Z_n - m\alpha Z_{n-1})^2}{n \cdot Z_{n-1}} \Bigg)
            = \E\Bigg( \E\bigg(
                \frac{(Z_n - m\alpha Z_{n-1})^2}{n \cdot Z_{n-1}}
            \bigg| \F_{n-1} \bigg) \Bigg)
            = \frac{\sigma^2 \alpha + m^2 \beta}{n}
            \stackrel{n\to\infty}{\longrightarrow} 0,
        \]
        this follows from Markov's inequality.
    \end{enumerate}
\end{proof}

\bigskip

Theorem \ref{thm:ObservedProgenitorsConsistentEstimators} will require the use of the following lemma, which we state without proof, noting that its proof is effectively unchanged from that of Lemma \ref{lma:NormedSumBoundedInProbability}, replacing $Z_n$ with $\phi(Z_n)$.
To help illustrate this, note that, given that $\varepsilon(z) = \alpha z$ for all $z \in \N_0$,
\[
    \E(Z_n \,|\, Z_{n-1}) = m\alpha \cdot Z_{n-1}
    \quad\text{and}\quad
    \E(\phi(Z_n) \,|\, \phi(Z_{n-1})) = m\alpha \cdot \phi(Z_{n-1}).
\]

\begin{lemma}\label{lma:PhiNormedSumBoundedInProbability}
    Let $\Z$ be a supercritical CBP with control function $\phi(\cdot)$ having $\varepsilon(z) = \alpha z$ and $\nu^2(z) = \beta z$, and with $\sigma^2$ finite. Then, on the event $\{Z_n \to \infty\}$ (so that $\phi(Z_n) \to \infty$ as well) and for $s$ such that $1 < s < m\alpha$,
    \[
        \lim_{n\to\infty} \P\bigg(
            \sum_{k=1}^n \frac{\phi(Z_{k-1})}{\phi(Z_{n-1})} \leq \frac{1}{1 - s^{-1}}
        \bigg) = 1.
    \]
\end{lemma}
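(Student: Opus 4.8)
The plan is to reproduce, almost verbatim, the proof of Lemma \ref{lma:NormedSumBoundedInProbability}, working with the auxiliary process $Y_n := \phi(Z_n)$ in place of $Z_n$. The first point is that $\{Y_n\}_{n\in\N_0}$ is itself a time-homogeneous Markov chain on $\N_0$: given $Y_n = \phi(Z_n)$, the next population size $Z_{n+1} = \sum_{i=1}^{Y_n}\xi_{n+1,i}$ and then $Y_{n+1} = \phi(Z_{n+1})$ depend on the past only through $Y_n$. Using $\E(Z_{n+1}\mid Y_n) = m Y_n$, the identities $\E(\phi(z)) = \alpha z$, $\V(\phi(z)) = \beta z$, and the law of total variance, one obtains (as in the display immediately preceding the lemma)
\[
    \E(Y_{n+1}\mid Y_n) = m\alpha\, Y_n, \qquad \V(Y_{n+1}\mid Y_n) = (\alpha^2\sigma^2 + m\beta)\, Y_n ,
\]
so $\{Y_n\}$ has a \emph{constant} mean growth rate $m\alpha$, which exceeds $1$ because $\tau(z) = \alpha m$ for every $z$ and $\Z$ is supercritical, and a conditional variance that is linear in $Y_n$. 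These are exactly the two structural features of $\{Z_n\}$ that drive the proof of Lemma \ref{lma:NormedSumBoundedInProbability}; everything there carries over with $A_{t,s}$ replaced by $A := (\alpha^2\sigma^2 + m\beta)/(m\alpha - s)^2$, and with the intermediate constant $t$ no longer needed since $m\varepsilon(z) = m\alpha z$ holds for all $z$.

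Concretely, I would first establish the analogue of \eqref{eqn:LinearMeanVarianceIncreasingTail}, namely $\lim_{M\to\infty}\P(Y_k > s\,Y_{k-1}\ \forall k\in\N_1 \mid Y_0 > M) = 1$. Chebyshev's inequality applied to $Y_k$ given $Y_{k-1}$ yields $\P(Y_k \le s Y_{k-1}\mid Y_{k-1}) \le A/Y_{k-1}$, and on the event in question $Y_{k-1} \ge s^{k-1} M$, so the probability of perpetual geometric growth is bounded below by $\prod_{k\ge 1}\big(1 - A s^{-(k-1)}/M\big)$, which tends to $1$ as $M\to\infty$ by the same logarithm/Taylor-series estimate used in the proof of Lemma \ref{lma:NormedSumBoundedInProbability}. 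In particular this shows that $\{Z_n\to\infty\}\subseteq\{Y_n\to\infty\}$ up to a null set (combine the geometric-growth event with the strong Markov property applied at a time when $Y$ is already large), which justifies the parenthetical assertion in the statement.

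Then, setting $K^{(n)} := \argmax_{k < \log(n)}\{Y_k\}$ and $M^{(n)} := Y_{K^{(n)}} - 1$, I would reproduce the decomposition \eqref{eqn:NormedSumBoundedInProbabilityDecomposition}: the conditional factor tends to $1$ by Lemma \ref{lma:DeterministicNormedSumBound} applied to the sequence $z_k = Y_k$, together with $K^{(n)}/s^{\,n - K^{(n)} - 1} \le \log(n)/s^{\,n - \log(n) - 1} \to 0$, while $\P(Y_k \ge s Y_{k-1}\ \forall k \ge K^{(n)}) \to 1$ by the strong Markov property and the limit just established, since $M^{(n)}\to\infty$ on $\{Z_n\to\infty\}$. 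Multiplying the two limits gives the claim. The only genuinely new bookkeeping — and the one place to take care — is verifying that $\{Y_n\}$ is indeed a time-homogeneous Markov chain with the asserted linear conditional variance, so that the strong Markov property and Lemma \ref{lma:DeterministicNormedSumBound} may be invoked for $Y$ exactly as they were for $Z$; once that is in place, the argument is word-for-word identical to the proof of Lemma \ref{lma:NormedSumBoundedInProbability}.
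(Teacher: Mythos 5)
Your proposal is correct and follows exactly the route the paper intends: the paper states that the proof is "effectively unchanged from that of Lemma \ref{lma:NormedSumBoundedInProbability}, replacing $Z_n$ with $\phi(Z_n)$", and your verification that $Y_n:=\phi(Z_n)$ is a time-homogeneous Markov chain with $\E(Y_{n+1}\mid Y_n)=m\alpha\,Y_n>Y_n$ and $\V(Y_{n+1}\mid Y_n)=(\alpha^2\sigma^2+m\beta)\,Y_n$ supplies precisely the two structural inputs (constant supercritical growth rate and conditional variance linear in the state) that the earlier proof uses. The rest of your argument (Chebyshev, the infinite product, the $K^{(n)},M^{(n)}$ decomposition via Lemma \ref{lma:DeterministicNormedSumBound}) is a faithful transcription of that proof, so nothing further is needed.
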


\begin{proof}[Proof of Theorem \ref{thm:ObservedProgenitorsConsistentEstimators}:]

    Let $\Z$ be a supercritical CBP with unknown control function $\phi(\cdot)$ having $\varepsilon(z) = \alpha z$ and $\nu^2(z) = \beta z$, and with $\sigma^2$ finite. Assume that progenitor counts are observed alongside population numbers, and accordingly take $\F_n$ to be the $\sigma$-algebra generated by $\big( Z_0,\, \phi(Z_0), \dots, \phi(Z_{n-1}),\, Z_n \big)$ and $\F_{\phi_n}$ the $\sigma$-algebra generated by $\big( Z_0,\, \phi(Z_0), \dots, Z_n,\, \phi(Z_n) \big)$, while $\F_{\phi_{-1}}$ is taken to represent the trivial $\sigma$-algebra.
    
    Throughout the proof, we 
   % assume without loss of generality that $\P(Z_n \to \infty) > 0$, and 
    make the simplifying assumption that $\P(\xi > 0) = 1$ and $\P(\phi(z) > 0) = 1$ for all $z \in \N_1$. This implies that $\P(Z_1 > 0 \,|\, Z_0 = z) = 1$ for all $z \in \N_1$, that $I^{\phi}_n = \{1, \dots, n\}$, and that $I^-_n = \{0, \dots, n-1\}$. Additionally, under this assumption, $\P(Z_n \to \infty) = 1$, and the estimators simplify to
    \begin{align*}
        \hat{m}_n &:= \frac{1}{n} \sum_{k = 1}^n \frac{Z_k}{\phi(Z_{k-1})},
        &\quad
        \hat{\alpha}_n &:= \frac{1}{n} \sum_{k = 0}^{n-1} \frac{\phi(Z_k)}{Z_k}, \\
        \bar{\sigma}_n^2 &:= \frac{1}{n} \sum_{k = 1}^n \frac{(Z_k - m \cdot \phi(Z_{k-1}))^2}{\phi(Z_{k-1})},
        &\quad
        \bar{\beta}_n &:= \frac{1}{n} \sum_{k = 0}^{n-1} \frac{(\phi(Z_k) - \alpha \cdot Z_k)^2}{Z_k}, \\
        \hat{\sigma}_n^2 &:= \frac{1}{n} \sum_{k = 1}^n \frac{(Z_k - \tilde{m}_n \cdot \phi(Z_{k-1}))^2}{\phi(Z_{k-1})},
        &\quad
        \hat{\beta}_n &:= \frac{1}{n} \sum_{k = 0}^{n-1} \frac{(\phi(Z_k) - \tilde{\alpha}_{n-1} \cdot Z_k)^2}{Z_k}.
    \end{align*}
 We omit the extension of our proof to the case where $\P(\xi = 0) > 0$ and/or there exists $z \in \N_1$ such that $\P(\phi(z) = 0) > 0$, which follows similar arguments as in the proof of Theorem \ref{thm:KnownPhiConsistentEstimators}, part (i), case (b).

    Throughout the rest of the proof, we will also make use of the following:
    \begin{equation}\label{eqn:OneOverPhiExpectationBound}
        \E\big( \phi^{-1}(z) \big) \leq 1
        \quad\text{for all}\; z \in \N_1.
    \end{equation}
    This follows as a consequence of the assumption $\P(\phi(z) > 0) = 1$ for all $z \in \N_1$, which tells us that
    $\phi(z) \geq 1$ a.s., and therefore $\phi^{-1}(z) \leq 1$ a.s..

    \bigskip

    \textbf{(i) Strong consistency of $\boldsymbol{\hat{m}_n}$:}
    Let $U_n := \tilde{m}_n - m$, where we recall $\tilde{m}_n := Z_n / \phi(Z_{n-1})$, so that
    \[
        \E(U_n | \F_{n-1} )
        = \E\big(
            \E(U_n | \F_{\phi_{n-1}} )
        \big| \F_{n-1} \big)
        = 0
    \]
    and $\E|U_n| \leq 2m$, while, using \eqref{eqn:OneOverPhiExpectationBound},
    \[
        \E( U_n^2 | \F_{n-1} )
        = \E\big(
            \V( U_n | \F_{\phi_{n-1}} )
        \big| \F_{n-1} \big)
        = \E\bigg(
            \frac{\sigma^2}{\phi(Z_{n-1})}
        \bigg| \F_{n-1} \bigg)
        \leq \sigma^2.
    \]
    Therefore
    \[
        \sum_{n=1}^\infty \frac{1}{n^2} \E( U_n^2 | \F_{n-1} ) < \infty,
    \]
    so that, by Corollary \ref{crly:SLLNforMDS}, $\frac{1}{n} \sum_{k=1}^n U_k \stackrel{a.s.}{\longrightarrow} 0$ and thus $\hat{m}_n \stackrel{a.s.}{\longrightarrow} m$.

    \bigskip

    \textbf{(ii) Strong consistency of $\boldsymbol{\hat{\alpha}_n}$:}
    Let $U_{\phi_n} := \tilde{\alpha}_n - \alpha$, where we recall $\tilde{\alpha}_n := \phi(Z_n) / Z_n$, so that
    \[
        \E(U_{\phi_n} | \F_{\phi_{n-1}} )
        = \E\big(
            \E(U_{\phi_n} | \F_{n} )
        \big| \F_{\phi_{n-1}} \big)
        = 0
    \]
    and $\E|U_{\phi_n}| \leq 2\alpha$, while
    \[
        \E\big( U_{\phi_n}^2 \big| \F_{\phi_{n-1}} \big)
        = \E\big(
            \V( U_{\phi_n} | \F_n )
        \big| \F_{\phi_{n-1}} \big)
        = \E\bigg(
            \frac{\beta}{Z_n}
        \bigg| \F_n \bigg)
        \leq \beta
    \]
    since $Z_n \geq 1$ under our simplifying assumption that $\P(\phi(z) = 0) = 0$ and $\P(\xi = 0) = 0$. Therefore
    \[
        \sum_{n=1}^\infty \frac{1}{n^2} \E\big( U_{\phi_{n-1}}^2 \big| \F_{\phi_{n-2}} \big) < \infty,
    \]
    so that, by Corollary \ref{crly:SLLNforMDS}, $\frac{1}{n} \sum_{k=1}^n U_{k-1} \stackrel{a.s.}{\longrightarrow} 0$ and thus $\hat{\alpha}_n \stackrel{a.s.}{\longrightarrow} \alpha$.

    \bigskip

    \textbf{(iii) Strong consistency of $\boldsymbol{\bar{\sigma}^2_n}$:}
    Under the assumption that $m$ is known, we introduce the estimator
    \[
        \tilde{\sigma}^2_n := \frac{(Z_n - m \cdot \phi(Z_{n-1}))^2}{\phi(Z_{n-1})},
    \]
    such that $\bar{\sigma}^2_n = \frac{1}{n} \sum_{k=1}^n \tilde{\sigma}^2_k$, and define
    \[
        U_n := \tilde{\sigma}^2_n - \sigma^2,
    \]
    such that $\E(U_n | \F_{n-1} ) = \E\big( \E(U_n | \F_{\phi_{n-1}} ) \big| \F_{n-1} \big) = 0$ and $\E|U_n| \leq 2 \sigma^2$, while
    \[
        \sum_{n=1}^{\infty} \frac{1}{n^2} \E\big( U^2_n \big| \F_{n-1} \big)
        = \sum_{n=1}^{\infty} \frac{1}{n^2} \Big( \E\big( \tilde{\sigma}^4_n \big| \F_{n-1} \big) - \sigma^4 \Big)
        < \sum_{n=1}^{\infty} \frac{1}{n^2} \E\big( \tilde{\sigma}^4_n \big| \F_{n-1} \big).
    \]
    By using the expansion for the fourth central moment of an i.i.d.\ sum and using \eqref{eqn:OneOverPhiExpectationBound}, we can find that
    \begin{align*}
        \E\big( \tilde{\sigma}^4_n \big| \F_{n-1} \big)
        &= \E\Bigg(
            \frac{
                \E\big( (Z_n - m \cdot \phi(Z_{n-1}))^4 \big| \F_{\phi_{n-1}} \big)
            }{
                \phi^2(Z_{n-1})
            }
        \Bigg| \F_{n-1} \Bigg) \\
        &= \E\Bigg(
            \frac{\E(\xi - m)^4 - 3 \sigma^4}{\phi(Z_{n-1})} + 3\sigma^4
        \Bigg| \F_{n-1} \Bigg) \\
        &\leq \E(\xi - m)^4.
    \end{align*}
    Since we have assumed that $\E(\xi - m)^4$ is finite, it follows that
    \[
        \sum_{n=1}^{\infty} \frac{1}{n^2} \E\big( U^2_n \big| \F_{n-1} \big)
        \leq \sum_{n=1}^{\infty} \frac{\E(\xi - m)^4}{n^2}
        < \infty.
    \]
    Therefore, by Corollary \ref{crly:SLLNforMDS}, $\frac{1}{n} \sum_{k=1}^n U_k \stackrel{a.s.}{\longrightarrow} 0$ and thus $\bar{\sigma}^2_n \stackrel{a.s.}{\longrightarrow} \sigma^2$.

    \bigskip

    \textbf{(iv) Weak consistency of $\boldsymbol{\hat{\sigma}^2_n}$:} We abandon the assumption that $m$ is known. We can decompose
    \[
        \hat{\sigma}^2_n
        = \bar{\sigma}^2_n
        +\underbrace{ \frac{2(m - \tilde{m}_n)}{n} \sum_{k=1}^n (Z_k - m \cdot \phi(Z_{k-1}))}_{\text{I}}
        + \underbrace{\frac{(m - \tilde{m}_n)^2}{n} \sum_{k=1}^n \phi(Z_{k-1})}_{\text{II}},
    \]
    where we have shown in (iii) that $\bar{\sigma}^2_n \stackrel{a.s.}{\longrightarrow} \sigma^2$. It remains to show that I and II both converge in probability to zero. \\

    \begin{enumerate}
        \item[(I)]
        We can decompose
        \[
            \hspace*{-0.6cm}
            \frac{2(m - \tilde{m}_n)}{n} \sum_{k=1}^n (Z_k - m \cdot \phi(Z_{k-1}))
            = -2 \underbrace{
                \sqrt{\frac{\sum_{k=1}^n Z_{k-1}}{\phi(Z_{n-1})}}
            }_{\text{(a)}} \cdot \underbrace{
                \frac{Z_n - m \cdot \phi(Z_{n-1})}{n^{1/4} \cdot \sqrt{\phi(Z_{n-1})}}
            }_{\text{(b)}} \cdot \underbrace{
                 \frac{\sum_{k=1}^n (Z_k - m \cdot \phi(Z_{k-1}))}{n^{3/4}\sqrt{\sum_{k=1}^n Z_{k-1}}}
            }_{\text{(c)}},
        \]
         and consider each of the components $(a)$, $(b)$ and $(c)$ in turn. \\

        \begin{enumerate}
            \item[(a)] As a result of Lemma \ref{lma:NormedSumBoundedInProbability}, for any $s$ such that $1 < s < m\alpha$, we have that
            \[
                \lim_{n\to\infty} \P\Bigg(
                    \sqrt{\frac{\sum_{k=1}^n Z_{k-1}}{Z_{n-1}}} \leq \sqrt{\frac{1}{1 - s^{-1}}}
                \Bigg) = 1.
            \]
            Using Chebyshev's inequality, for $r < \alpha$,
            \begin{align*}
                \P( \phi(Z_{n-1}) > r \cdot Z_{n-1} \,|\, Z_{n-1} )
                &\geq 1 - \P\big( |\phi(Z_{n-1}) - \alpha \cdot Z_{n-1} | \geq (\alpha - r) \cdot Z_{n-1} \,\big|\, Z_{n-1} \big) \\
                &\geq 1 - \frac{\beta}{(\alpha - r)^2 \cdot Z_{n-1}}
            \end{align*}
            By conditioning on the event $\{ \phi(Z_{n-1}) > r \cdot Z_{n-1} \}$, we then find that
            \begin{align*}
                &\lim_{n\to\infty} \P\Bigg(
                    \sqrt{\frac{\sum_{k=1}^n Z_{k-1}}{\phi(Z_{n-1})}} \leq \sqrt{\frac{r}{1 - s^{-1}}}
                \Bigg) \\
                &= \lim_{n\to\infty} \P\Bigg(
                    \sqrt{\frac{\sum_{k=1}^n Z_{k-1}}{\phi(Z_{n-1})}} \leq \sqrt{\frac{r}{1 - s^{-1}}}
                \,\Bigg|\, \phi(Z_{n-1}) > r \cdot Z_{n-1} \Bigg)
                \cdot \P( \phi(Z_{n-1}) > r \cdot Z_{n-1} ) \\
                &= 1
            \end{align*}
            on the event $\{ Z_n \to \infty \}$.
            Hence we see that $\sqrt{\frac{\sum_{k=1}^n Z_{k-1}}{\phi(Z_{n-1})}}$ is asymptotically bounded by a constant in probability.
            \medskip

            \item[(b)] Given that
            \[
                \E\bigg( \frac{Z_n - m \cdot \phi(Z_{n-1})}{n^{1/4} \cdot \sqrt{\phi(Z_{n-1})}} \bigg| \F_{\phi_{n-1}} \bigg) = 0
                \quad\text{and}\quad
                \V\bigg( \frac{Z_n - m \cdot \phi(Z_{n-1})}{n^{1/4} \cdot \sqrt{\phi(Z_{n-1})}} \bigg| \F_{\phi_{n-1}} \bigg) = \frac{\sigma^2}{\sqrt{n}},
            \]
            we see that
            \[
                \E\bigg(
                    \frac{Z_n - m \cdot \phi(Z_{n-1})}{n^{1/4} \cdot \sqrt{\phi(Z_{n-1})}}
                \bigg)
                = 0
                \quad\text{and}\quad
                \V\bigg(
                    \frac{Z_n - m \cdot \phi(Z_{n-1})}{n^{1/4} \cdot \sqrt{\phi(Z_{n-1})}}
                \bigg)
                \stackrel{n\to\infty}{\longrightarrow} 0,
            \]
            so, as a consequence of Chebyshev's inequality, it follows that
            \[
                \frac{Z_n - m \cdot \phi(Z_{n-1})}{n^{1/4} \cdot \sqrt{\phi(Z_{n-1})}}
                \stackrel{P}{\longrightarrow} 0.
            \]

            \item[(c)] Defining
            \[
                U_n := Z_n - m \cdot \phi(Z_{n-1})
                \quad\text{and}\quad
                J_n := n^{3/4}\sqrt{\sum_{k=1}^n Z_{k-1}},
            \]
            we can see that $\{U_n\}_{n\in\N_0}$ is a martingale difference sequence with respect to the filtration $\{ \F_n \}_{n\in\N_0}$, since $\E(U_n | \F_{n-1}) = 0$ for all $n \in \N_1$ and
            \begin{align*}
                \E|U_n|
                &\leq \sqrt{\E(U_n^2)} \tag{H\"{o}lder's inequality}\\
                &= \sqrt{\E(\V(Z_n | \F_{\phi{n-1}}))} \\
                &= \sqrt{\sigma^2 \cdot \E(\phi(Z_{n-1}))} \\
                &= \sqrt{\sigma^2 \alpha \cdot \E(Z_{n-1})} \\
                &< \infty,
            \end{align*}
            and $\{J_n\}_{n\in\N_0}$ is positive, non-decreasing, with $\lim_{n\to\infty} J_n = \infty$.
            Additionally,
            \begin{align*}
                \sum_{n=1}^{\infty} J_n^{-2} \E(U_n^2 | \F_{n-1})
                &= \sum_{n=1}^{\infty} \frac{\E(\V(Z_n | \F_{\phi_{n-1}}) | \F_{n-1})}{n^{3/2} \cdot \sum_{k=1}^n Z_{k-1}} \\
                &= \sum_{n=1}^{\infty} \frac{\sigma^2 \alpha \cdot Z_{n-1}}{n^{3/2} \cdot \sum_{k=1}^n Z_{k-1}} \\
                &< \sum_{n=1}^{\infty} \frac{\sigma^2 \alpha}{n^{3/2}} \\
                &< \infty \;\;\text{a.s.},
            \end{align*}
            so Theorem \ref{thm:SLLNforMDS} applies, such that
            $J_n^{-1} \sum_{k=1}^n U_k = \frac{\sum_{k=1}^n (Z_k - m \cdot \phi(Z_{k-1}))}{n^{3/4}\sqrt{\sum_{k=1}^n Z_{k-1}}} \stackrel{a.s.}{\longrightarrow} 0$ as $n \to \infty$. \\
        \end{enumerate}

        \item[(II)] We can decompose
        \[
            \frac{(m - \tilde{m}_n)^2}{n} \sum_{k=1}^n \phi(Z_{k-1})
            = \frac{(Z_n - m \cdot \phi(Z_{n-1}))^2}{n \cdot \phi(Z_{n-1})} \cdot \sum_{k=1}^n \frac{\phi(Z_{k-1})}{\phi(Z_{n-1})}.
        \]
        and have already shown in Lemma \ref{lma:PhiNormedSumBoundedInProbability} that $\sum_{k=1}^n \frac{\phi(Z_{k-1})}{\phi(Z_{n-1})}$ is asymptotically bounded in probability by a constant. Therefore the result $\frac{(m - \tilde{m}_n)^2}{n} \sum_{k=1}^n \phi(Z_{k-1}) \stackrel{P}{\longrightarrow} 0$ will follow if we show that $\frac{(Z_n - m \cdot \phi(Z_{n-1}))^2}{n \cdot \phi(Z_{n-1})} \stackrel{P}{\longrightarrow} 0$.
        Noting that $\frac{(Z_n - m \cdot \phi(Z_{n-1}))^2}{n \cdot \phi(Z_{n-1})}$ is non-negative, and that
        \[
            \E\Bigg( \frac{(Z_n - m \cdot \phi(Z_{n-1}))^2}{n \cdot \phi(Z_{n-1})} \Bigg)
            = \E\Bigg( \frac{
                \V\big( Z_n \big| \F_{\phi_{n-1}}) \big)
            }{
                n \cdot \phi(Z_{n-1})
            } \Bigg)
            = \frac{\sigma^2}{n}
            \stackrel{n\to\infty}{\longrightarrow} 0,
        \]
        this follows from Markov's inequality.
    \end{enumerate}

    \bigskip

    \textbf{(v) Strong consistency of $\boldsymbol{\bar{\beta}_n}$:}
    Under the assumption that $\alpha$ is known, we introduce the estimator
    \[
        \tilde{\beta}_n := \frac{(\phi(Z_n) - \alpha \cdot Z_n)^2}{Z_n},
    \]
    such that $\bar{\beta}_n = \frac{1}{n} \sum_{k=0}^{n-1} \tilde{\beta}_k$, and define
    \[
        U_{\phi_n} := \tilde{\beta}_n - \beta,
    \]
    such that $\E( U_{\phi_n} | \F_{\phi_{n-1}} ) = \E\big( \E( U_{\phi_n} | \F_n ) \big| \F_{\phi_{n-1}} \big) = 0$ and $\E| U_{\phi_n} | \leq 2 \beta$, while
    \[
        \sum_{n=1}^{\infty} \frac{1}{n^2} \E (U^2_{\phi_n} | \F_{\phi_{n-1}} )
        = \sum_{n=1}^{\infty} \frac{1}{n^2} \Big( \E ( \tilde{\beta}^2_n | \F_{\phi_{n-1}} ) - \beta^2 \Big)
        < \sum_{n=1}^{\infty} \frac{1}{n^2} \E ( \tilde{\beta}^2_n | \F_{\phi_{n-1}} ).
    \]
    Under the assumption that there exists $d > 0$ such that $\sup_{z\geq 1}\Big\{ \frac{\E(\phi(z) - \varepsilon(z))^4}{z^2} \Big\} \leq d$, we have that $\E ( \tilde{\beta}^2_n | \F_{\phi_{n-1}} ) \leq d$, so that
    \[
        \sum_{n=1}^{\infty} \frac{1}{n^2} \E (U^2_{\phi_n} | \F_{\phi_{n-1}} )
        < \sum_{n=1}^{\infty} \frac{d}{n^2}
        < \infty.
    \]

    Therefore, by Corollary \ref{crly:SLLNforMDS}, $\frac{1}{n} \sum_{k=1}^n U_{\phi_{k-1}} \stackrel{a.s.}{\longrightarrow} 0$ and thus $\bar{\beta}_n \stackrel{a.s.}{\longrightarrow} \beta$.

    \bigskip

    \textbf{(vi) Weak consistency of $\boldsymbol{\hat{\beta}_n}$:} We abandon the assumption that $\alpha$ is known. We can decompose
    \[
        \hat{\beta}_n
        = \bar{\beta}_n
        + \underbrace{
            \frac{2(\alpha - \tilde{\alpha}_{n-1})}{n} \sum_{k=0}^{n-1} (\phi(Z_k) - \alpha Z_k)
        }_{\text{I}}
        + \underbrace{
            \frac{(\alpha - \tilde{\alpha}_{n-1})^2}{n} \sum_{k=0}^{n-1} Z_k,
        }_{\text{II}}
    \]
    where we have shown in (v) that $\bar{\beta}_n \stackrel{a.s.}{\longrightarrow} \beta$. It remains to show that I and II both converge in probability to zero. \\

    \begin{enumerate}
        \item[(I)]
        We can decompose
        \[
            \hspace*{-0.6cm}
            \frac{2(\alpha - \tilde{\alpha}_{n-1})}{n} \sum_{k=0}^{n-1} (\phi(Z_k) - \alpha Z_k)
            = -2 \underbrace{
                \sqrt{\frac{\sum_{k=1}^{n-1} \phi(Z_{k-1})}{Z_{n-1}}}
            }_{\text{(a)}} \cdot \underbrace{
                \frac{\phi(Z_{n-1}) - \alpha Z_{n-1}}{n^{1/4} \cdot \sqrt{Z_{n-1}}}
            }_{\text{(b)}} \cdot \underbrace{
                 \frac{\sum_{k=0}^{n-1} (\phi(Z_k) - \alpha Z_k)}{n^{3/4}\sqrt{\sum_{k=1}^{n-1} \phi(Z_{k-1})}}
            }_{\text{(c)}},
        \]
         and consider each of the components $(a)$, $(b)$ and $(c)$ in turn. \\

        \begin{enumerate}
            \item[(a)] As a result of Lemma \ref{lma:PhiNormedSumBoundedInProbability}, for any $s$ such that $1 < s < m\alpha$, we have that
            \[
                \lim_{n\to\infty} \P\Bigg(
                    \sqrt{\frac{\sum_{k=1}^{n-1} \phi(Z_{k-1})}{\phi(Z_{n-2})}} \leq \sqrt{\frac{1}{1 - s^{-1}}}
                \Bigg) = 1.
            \]
            Using Chebyshev's inequality, for $r < m$,
            \begin{align*}
                \hspace*{-0.4cm}
                \P( Z_{n-1} > r \cdot \phi(Z_{n-2}) \,|\, \phi(Z_{n-2}) )
                &\geq 1 - \P\big( |Z_{n-1} - m \cdot \phi(Z_{n-2}) | \geq (m - r) \cdot \phi(Z_{n-2}) \,\big|\, \phi(Z_{n-2}) \big) \\
                &\geq 1 - \frac{\sigma^2}{(m - r)^2 \cdot \phi(Z_{n-2})}
            \end{align*}
            By conditioning on the event $\{ Z_{n-1} > r \cdot \phi(Z_{n-2}) \}$, we then find that
            \begin{align*}
                &\lim_{n\to\infty} \P\Bigg(
                    \sqrt{\frac{\sum_{k=1}^{n-1} \phi(Z_{k-1})}{Z_{n-1}}} \leq \sqrt{\frac{r}{1 - s^{-1}}}
                \Bigg) \\
                &= \lim_{n\to\infty} \P\Bigg(
                    \sqrt{\frac{\sum_{k=1}^{n-1} \phi(Z_{k-1})}{Z_{n-1}}} \leq \sqrt{\frac{r}{1 - s^{-1}}}
                \,\Bigg|\, Z_{n-1} > r \cdot \phi(Z_{n-2}) \Bigg)
                \cdot \P( Z_{n-1} > r \cdot \phi(Z_{n-2}) ) \\
                &= 1
            \end{align*}
            on the event $\{ Z_n \to \infty \}$.
            Hence we see that $\sqrt{\frac{\sum_{k=1}^{n-1} \phi(Z_{k-1})}{Z_{n-1}}}$ is asymptotically bounded by a constant in probability.
            \medskip

            \item[(b)] Given that
            \[
                \E\bigg(
                    \frac{\phi(Z_{n-1}) - \alpha Z_{n-1}}{n^{1/4} \cdot \sqrt{Z_{n-1}}}
                \bigg| \F_{n-1} \bigg) = 0
                \quad\text{and}\quad
                \V\bigg(
                    \frac{\phi(Z_{n-1}) - \alpha Z_{n-1}}{n^{1/4} \cdot \sqrt{Z_{n-1}}}
                \bigg| \F_{n-1} \bigg) = \frac{\beta}{\sqrt{n}},
            \]
            we see that
            \[
                \E\bigg(
                    \frac{\phi(Z_{n-1}) - \alpha Z_{n-1}}{n^{1/4} \cdot \sqrt{Z_{n-1}}}
                \bigg)
                = 0
                \quad\text{and}\quad
                \V\bigg(
                    \frac{\phi(Z_{n-1}) - \alpha Z_{n-1}}{n^{1/4} \cdot \sqrt{Z_{n-1}}}
                \bigg)
                \stackrel{n\to\infty}{\longrightarrow} 0,
            \]
            so, as a consequence of Chebyshev's inequality, it follows that
            \[
                \frac{\phi(Z_{n-1}) - \alpha Z_{n-1}}{n^{1/4} \cdot \sqrt{Z_{n-1}}}
                \stackrel{P}{\longrightarrow} 0.
            \]

            \item[(c)] Defining
            \[
                U_{\phi_{n-1}} := \phi(Z_{n-1}) - \alpha Z_{n-1}
                \quad\text{and}\quad
                J_n := n^{3/4}\sqrt{\sum_{k=1}^{n-1} \phi(Z_{k-1})},
            \]
            we can see that $\{U_{\phi_{n-1}}\}_{n\in\N_1}$ is a martingale difference sequence with respect to the filtration $\{ \F_{\phi_n} \}_{n\in\N_0}$, since $\E(U_{\phi_{n-1}} | \F_{\phi_{n-2}}) = 0$ for all $n \in \N_1$ and
            \begin{align*}
                \E|U_{\phi_{n-1}}|
                &\leq \sqrt{\E(U_{\phi_{n-1}}^2)} \tag{H\"{o}lder's inequality}\\
                &= \sqrt{\E(\V( \phi(Z_{n-1}) | \F_{n-1}))} \\
                &= \sqrt{\beta \cdot \E(Z_{n-1})} \\
                &< \infty,
            \end{align*}
            and $\{J_n\}_{n\in\N_0}$ is positive, non-decreasing, with $\lim_{n\to\infty} J_n = \infty$.
            Additionally,
            \begin{align*}
                \sum_{n=1}^{\infty} J_n^{-2} \E(U_{\phi_{n-1}}^2 | \F_{\phi_{n-2}})
                &= \sum_{n=1}^{\infty} \frac{ \E(\V( \phi(Z_{n-1}) | \F_{n-1}) | \F_{\phi_{n-2}} )}{n^{3/2} \cdot \sum_{k=1}^{n-1} \phi(Z_{k-1})} \\
                &= \sum_{n=1}^{\infty} \frac{m \beta \cdot \phi(Z_{n-2})}{n^{3/2} \cdot \sum_{k=1}^{n-1} \phi(Z_{k-1})} \\
                &< \sum_{n=1}^{\infty} \frac{m \beta}{n^{3/2}} \\
                &< \infty \;\;\text{a.s.},
            \end{align*}
            so Theorem \ref{thm:SLLNforMDS} applies, such that
            $J_n^{-1} \sum_{k=1}^n U_{\phi_{k-1}} = \frac{\sum_{k=0}^{n-1} (\phi(Z_k) - \alpha Z_k)}{n^{3/4}\sqrt{\sum_{k=1}^{n-1} \phi(Z_{k-1})}} \stackrel{a.s.}{\longrightarrow} 0$ as $n \to \infty$. \\
        \end{enumerate}

        Then, since $(a)$ is asymptotically bounded by a constant in probability, and $(b)$ and $(c)$ both converge to zero in probability, $\frac{2(\alpha - \tilde{\alpha}_{n-1})}{n} \sum_{k=0}^{n-1} (\phi(Z_k) - \alpha Z_k) \stackrel{P}{\longrightarrow} 0$. \\

        \item[(II)] We can decompose
        \[
            \frac{(\alpha - \tilde{\alpha}_{n-1})^2}{n} \sum_{k=0}^{n-1} Z_k
            = \frac{(\phi(Z_{n-1}) - \alpha Z_{n-1})^2}{n \cdot Z_{n-1}} \cdot \sum_{k=1}^{n} \frac{Z_{k-1}}{Z_{n-1}},
        \]
        and have already shown in Lemma \ref{lma:NormedSumBoundedInProbability} that $\sum_{k=1}^n \frac{Z_{k-1}}{Z_{n-1}}$ is asymptotically bounded in probability by a constant. Therefore the result $\frac{(\alpha - \tilde{\alpha}_{n-1})^2}{n} \sum_{k=0}^{n-1} Z_k \stackrel{P}{\longrightarrow} 0$ will follow if we show that $\frac{(\phi(Z_{n-1}) - \alpha Z_{n-1})^2}{n \cdot Z_{n-1}} \stackrel{P}{\longrightarrow} 0$.
        Noting that $\frac{(\phi(Z_{n-1}) - \alpha Z_{n-1})^2}{n \cdot Z_{n-1}}$ is non-negative, and that
        \[
            \E\Bigg( \frac{(\phi(Z_{n-1}) - \alpha Z_{n-1})^2}{n \cdot Z_{n-1}} \Bigg)
            = \E\Bigg( \frac{
                \V\big( \phi(Z_{n-1}) \big| \F_{n-1}) \big)
            }{
                n \cdot Z_{n-1}
            } \Bigg)
            = \frac{\beta}{n}
            \stackrel{n\to\infty}{\longrightarrow} 0,
        \]
        this follows from Markov's inequality.
    \end{enumerate}

\end{proof}

\section*{Acknowledgements}

Sophie Hautphenne would like to thank the Australian Research Council (ARC) for support through her Discovery Project DP200101281.

\bibliographystyle{plain}
\bibliography{refs}

\end{document}